\documentclass[reqno]{amsart}  	
\usepackage{geometry}                		
\usepackage{graphicx}				
\usepackage{todonotes}
\usepackage{amssymb}
\usepackage{amsthm}
\usepackage{mathtools}
\usepackage{amsmath,amsfonts}
\usepackage{epsfig}
\usepackage{comment}
\usepackage{color}
\usepackage{soul}
\usepackage{hyperref} 
\hypersetup{
   colorlinks=true,
   linkcolor=blue,   
   citecolor=blue,   
}

\newcommand{\Mod}{{\rm Mod}}

\newcommand{\R}{{\mathbb R}}
\newcommand{\Q}{{\mathbb Q}}
\newcommand{\N}{{\mathbb N}}
\newcommand{\Om}{\Omega}

\newcommand{\cp}{\text{Cap}}

\newcommand{\simge}{\gtrsim}
\newcommand{\simle}{\lesssim}

\newcommand{\rad}{{\rm rad}}

\DeclareMathOperator{\diam}{diam}

\DeclareMathOperator{\dist}{dist}

\DeclareMathOperator{\HB}{{\it HB}}
\DeclareMathOperator{\rcapa}{cap}

\def\vint_#1{\mathchoice
	{\mathop{\vrule width 6pt height 3 pt depth -2.5pt
			\kern -8pt \intop}\nolimits_{#1}}%
	{\mathop{\vrule width 5pt height 3 pt depth -2.6pt
			\kern -6pt \intop}\nolimits_{#1}}%
	{\mathop{\vrule width 5pt height 3 pt depth -2.6pt
			\kern -6pt \intop}\nolimits_{#1}}%
	{\mathop{\vrule width 5pt height 3 pt depth -2.6pt
			\kern -6pt \intop}\nolimits_{#1}}}

\theoremstyle{plain}
\newtheorem{theorem}[equation]{Theorem}

\newtheorem{lemma}[equation]{Lemma}
\newtheorem{proposition}[equation]{Proposition}

\numberwithin{equation}{section}

\theoremstyle{definition}
\newtheorem{definition}[equation]{Definition}

\newtheorem{example}[equation]{Example}
\newtheorem{remark}[equation]{Remark}

\title[Dirichlet problem on unbounded uniform domains and sphericalization]{Solving Dirichlet problem on unbounded uniform domains by using sphericalization techniques}
\author{Riikka Korte, Sari Rogovin, Nageswari Shanmugalingam and Timo Takala}

\begin{document}
	
\begin{abstract}
Within the setting of metric spaces equipped with a doubling measure and supporting a $p$-Poincar\'e inequality, establishing existence of solutions to Dirichlet problem in a bounded domain in such a metric space is accomplished via direct methods of calculus of variation and the use of a Maz'ya type inequality, which is a consequence of the Poincar\'e inequality.
However, when the domain and its boundary are unbounded, such a method is unavailable. In this paper, using the technique of sphericalization developed in the prior paper~\cite{KRST}, we establish the existence of solutions to the Dirichlet boundary value problem for $p$-harmonic functions in unbounded uniform domains with unbounded boundary when $1<p<\infty$. We also explore the issue of whether such solutions are unique by considering $p$-parabolicity and $p$-hyperbolicity properties of the domain.
\end{abstract}

	\maketitle
	
	\vskip .3cm
	
	
	\noindent {\small \textit{Keywords}:} Sphericalization, $p$-harmonicity, Dirichlet problem, $p$-Poincar\'e inequality, doubling measure, $p$-parabolicity, $p$-hyperbolicity, Besov spaces, uniform domain.
	
	\medskip

	\noindent {\small \textit{Mathematics Subject Classification} (2020): {Primary:  31E05; Secondary: 30L10, 30L15, 51F30.
		} }
		
\tableofcontents

\noindent{\bf Acknowledgement:} {\small R.K. is partially supported by the Research Council of Finland through project 360184. N.S. is partially supported by the National Science Foundation (US) grant DMS\#2348748. T.T. was supported by the Finnish Cultural Foundation.}

\section{Introduction} \label{section:intro}

Solving Dirichlet boundary value problems for bounded domains in a metric measure space equipped with a doubling measure and supporting a $p$-Poincar\'e inequality is accomplished by the direct method of calculus of variations, thanks to gaining control over the $L^p$ norms of Sobolev functions in terms of the $L^p$ norms of their gradients. In unbounded domains this is not possible for two reasons; our boundary data belong to the homogeneous Besov classes and so are not necessarily globally integrable, and functions with globally finite energy need not themselves be in the global $L^p$ classes, let alone being controlled in terms of their energy. 
Thus it is beneficial to transform the Dirichlet problem on unbounded domains into a Dirichlet problem on bounded domains via certain conformal transformation of the domain.
In many natural contexts, the unbounded domains of interest are uniform domains, and in~\cite{KRST} we proposed a conformal transformation that preserves the uniformity property of the domain. The transformations considered in~\cite{KRST} belong to the sub-category of transformations known as sphericalizations.
In the present paper we study the corresponding transformation of boundary data and the Dirichlet problem.

The method of transforming an unbounded metric space via sphericalization is a powerful one, first utilized in transforming the complex plane into the bounded space that is the sphere. Sphericalization is taught in most courses on complex analysis, but it has found uses beyond complex analysis. Balogh and Buckley considered a general version of sphericalization in the context of unbounded metric spaces~\cite{BaloghBuckley}. The method of sphericalization was extended to the setting of metric measure spaces in~\cite{BBL, BBL2, EstiXining, EstiXining2, GibaraKorteSh, Xining1, XiningNages} by including transformations of the measures as well. Apart from~\cite{GibaraKorteSh}, these works constructed the sphericalization via a chaining procedure, which forces there to be only one new boundary point at infinity even when the domain in question might have more than one end at infinity. The construction of the sphericalized metric in~\cite{GibaraKorteSh,GibaraSh,KRST} is done via path integrals, and hence if the domain of interest has more than one end at infinity, then the resulting transformed space will have the corresponding number of new boundary points. In the present paper, as in~\cite{GibaraKorteSh,GibaraSh,KRST}, we are concerned with an unbounded uniform domain, and so there is only one end at infinity.

The Dirichlet boundary value problem considered in this paper corresponds to the classical situation in a Euclidean domain $\Om$, of finding a Sobolev function $u$ such that 
\[
-\Delta_p u=0 \text{ on }\Om,
\] 
and the trace of $u$ to the boundary $\partial\Om$, denoted
$Tu$, satisfies 
\[
Tu=f \text{ on }\partial\Om
\] 
for a given Dirichlet data $f$ on $\partial\Om$. Interested readers can find information regarding this problem from~\cite{HKM} and the references therein. 
The analog of the equation $-\Delta_pu=0$ in the metric setting corresponds to minimization of $p$-energy in the domain; a good background in the metric setting can be found in~\cite{bjornbjorn} and the references given there.

To achieve the goal of solving Dirichlet boundary value problems on unbounded uniform domains, we considered a wider class of sphericalization transformations in the context of unbounded uniform domains in the paper~\cite{KRST}, and the tools developed there are used in the present paper to incorporate the transformation of the boundary of the uniform domains. We point out here that the method of transforming an unbounded uniform domain, its measure, and its boundary into a bounded uniform domain equipped with the transformed measure and the transformed boundary is useful beyond the goal of the present paper. For instance, exploring the large-scale behavior of certain functions in the unbounded domain is made easier by considering the corresponding behavior of the transformed function near the new boundary point of  the transformed domain that corresponds to infinity.

{\bf Structural assumptions.}
Throughout this paper, we fix a parameter $1\le p<\infty$ and assume the following:
\begin{enumerate}
    \item $(X,d,\mu)$ is an unbounded complete metric measure space with a doubling measure $\mu$ and supporting a $p$-Poincar\'e inequality. 
    \item $\Omega\subset X$ is an unbounded uniform domain, whose boundary $\partial \Omega$ is unbounded, uniformly perfect with constant $\kappa$, and equipped with a Radon measure $\nu$ that is $\theta$-codimensional with respect to $\mu$ with some $0<\theta<p$ as given in~\eqref{eq:codim-def}.
\end{enumerate}
Uniformity of the domain $\Omega$ together with the doubling property of $\mu$ on $(X,d)$ and the support of a $p$-Poincar\'e inequality on $(X,d,\mu)$ guarantees that $\mu$ is a doubling measure on $(\Omega,d)$ and $(\Omega,d,\mu)$ supports a $p$-Poincar\'e inequality, see \cite[Theorem 4.4]{bjornshanmugalingam}.
Thus when we study boundary value problems on a uniform domain $\Omega$, the ambient metric measure space $X$ does not play any role, so we take $X=\overline\Omega^d$, where $\overline\Omega^d$ denotes the metric completion of $\Omega$. The boundary of $\Omega$ is then $\partial \Omega=\overline\Omega^d\setminus \Omega$.

In the current paper the sphericalization is defined by using a positive metric density function~$\rho$ that is defined radially around a fixed base point $b \in \partial \Om$.
The new metric $d_{\rho}$ is defined via path integrals of $\rho$. The new measure $\mu_{\rho}$ is defined continuously with respect to $\mu$ by using $\rho^p$ as a weight function. Similarly the new measure $\nu_{\rho}$ is defined continuously with respect to $\nu$ by using $\rho^{p-\theta}$ as a weight function. See Section~\ref{sect-spher-def} for the precise definitions. 

We assume that the metric density function $\rho : (0,\infty) \to (0,\infty)$ is lower semicontinuous and satisfies the following conditions, which we call conditions \ref{condA}, \ref{condB}, \ref{condC}, and \ref{condD}. Here and throughout this paper we set $|x| := d(x,b)$.
\begin{enumerate}
\renewcommand{\labelenumi}{\textbf{\theenumi}}
\renewcommand{\theenumi}{(\Alph{enumi})}
\item\label{condA}
There exists a positive constant $C_A$ such that whenever $0 < r \leq 2s+1$ and $0 < s \leq 2r+1$,
\begin{equation*} 
\rho(r) \leq C_A \rho(s).
\end{equation*}
\item\label{condB}
There exists a positive constant $C_B$ such that for every $r > 0$,
\begin{equation*}
\int_r^{\infty} \rho(t) dt
\leq C_B (r+1) \rho(r).
\end{equation*}
\item\label{condC}
There exists a positive constant $C_C$ such that for every $r > 0$,
\begin{equation*} 
\int_{\Omega\setminus B(b,r)} \rho(|x|)^p d\mu(x)
\le C_C \rho(r)^p \mu(B(b,r+1)).
\end{equation*}
\item
\label{condD}
There exists a positive constant $C_D$ such that for every $r > 0$,
\begin{equation*} 
\int_{\partial \Omega \setminus B(b,r)} \rho(|x|)^{p-\theta} d\nu(x)
\leq C_D \rho(r)^{p-\theta} \nu(B(b,r+1)).
\end{equation*}
\end{enumerate}
The conditions~\ref{condA}, \ref{condB}, and~\ref{condC} are the same as in \cite{KRST} with the choice of $\sigma = p$.
Condition~\ref{condD} is an additional condition that is required to preserve the codimension property. 
The codimensionality relationship between $\nu_\rho$ and $\mu_{\rho}$, proved in Section~\ref{sect-codim}, requires $\nu_\rho$ to be doubling as well, and condition~\ref{condD} is needed in establishing the doubling property of $\nu_\rho$ (mirroring the requirement of condition~\ref{condC} in establishing the doubling property of $\mu_\rho$, see~\cite{KRST}). In light of condition~\ref{condA} and the doubling properties of $\mu$ and $\nu$, the conditions~\ref{condB}, \ref{condC}, and~\ref{condD} need to be verified for a given function $\rho$ only for the case $r \ge 1$.
Note that the doubling property of $\nu$ follows from the doubling property of $\mu$ and the codimensionality, see also the discussion after Definition \ref{codim-def}.

The first main theorem of this paper is the following, establishing existence of a solution to the Dirichlet problem on unbounded uniform domains.

\begin{theorem}\label{thm:Main}
Let $p > 1$ and suppose that $(\Om, d,\mu)$ satisfies the structural assumptions listed above.
For each $f\in HB^{1-\theta/p}_{p,p}(\partial\Om,d,\nu)$ there is a unique function $u_f\in D^{1,p}(\Om,d,\mu)$ that is $p$-harmonic in $(\Om,d,\mu)$ and $T u_f = f$ $\nu$-a.e.~on $\partial\Om$.
\end{theorem}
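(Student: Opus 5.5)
The plan is to pass to the sphericalized space and solve the problem there. Choose a density $\rho$ satisfying conditions \ref{condA}--\ref{condD}; the canonical candidate is $\rho(t)=\mu(B(b,t+1))^{-1/p}/(1+t)$ or a power-type modification of it. By the results of \cite{KRST}, $(\Om,d_\rho,\mu_\rho)$ is a bounded uniform domain with a doubling measure supporting a $p$-Poincar\'e inequality. Its completion adds a single point $\infty$ to $\partial\Om$.

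The next step is to transfer the boundary structure. Using condition~\ref{condD}, I will show that $\nu_\rho$ is doubling on $\partial_\rho\Om=\partial\Om\cup\{\infty\}$ and $\theta$-codimensional with respect to $\mu_\rho$ (Section~\ref{sect-codim}). Then I will establish three invariance properties.
\begin{enumerate}
\item The $p$-energy is conformally invariant. Since $g_\rho = g/\rho$ is the minimal upper gradient for $d_\rho$ and $d\mu_\rho=\rho^p\,d\mu$, we get $\int g_\rho^p\,d\mu_\rho=\int g^p\,d\mu$. Consequently $D^{1,p}(\Om,d,\mu)=D^{1,p}(\Om,d_\rho,\mu_\rho)$ with equal energies, and $p$-harmonicity is preserved, because minimizing energy under compactly supported perturbations is the same problem in both metrics.
\item The homogeneous Besov energy is comparable. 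For $f\in HB^{1-\theta/p}_{p,p}(\partial\Om,d,\nu)$, the Besov energy computed with $d_\rho$ and $\nu_\rho$ is bounded by a constant times the original one. I will prove this by splitting the double integral into pairs with $d(x,y)\lesssim 1+\min(|x|,|y|)$, where $d_\rho(x,y)\simeq\rho(|x|)d(x,y)$ by \ref{condA}, and far pairs, where $d_\rho\simeq$ the distance to $\infty$. The far pairs are controlled using \ref{condB} and \ref{condD}.
\item Traces agree $\nu$-a.e. This follows because Lebesgue points and the averages on small balls are comparable in the two geometries away from $\infty$, which is a single point of $\nu_\rho$-measure zero.
\end{enumerate}

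On the bounded side, $f\in B^{1-\theta/p}_{p,p}(\partial_\rho\Om,d_\rho,\nu_\rho)$ in the inhomogeneous sense. The $L^p(\nu_\rho)$ part needs care, since $f$ is only in a homogeneous class. I will subtract a constant, or control $\int|f-f_{B}|^p\,d\nu_\rho$ by the Besov energy via a Poincar\'e-type estimate on the bounded doubling boundary. The known extension theorem for bounded uniform domains with codimensional boundary then provides $Ef\in N^{1,p}(\Om,d_\rho,\mu_\rho)$ with $T(Ef)=f$. Next, minimize $\int g_u^p\,d\mu_\rho$ over $u\in Ef+N^{1,p}_0$. Existence follows from the direct method, using the Maz'ya inequality available on bounded domains together with the fact that $\partial_\rho\Om$ has positive capacity. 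Strict convexity for $p>1$ gives uniqueness of the minimizer. Transporting back by the invariance properties yields $u_f\in D^{1,p}(\Om,d,\mu)$ that is $p$-harmonic with $Tu_f=f$.

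The main obstacle is uniqueness in the stated class. A second solution $v$ has $Tv=f$ but need not lie in $Ef+N^{1,p}_0(\Om,d_\rho,\mu_\rho)$, because zero trace on $\partial\Om$ says nothing at the point $\infty$. I would resolve this by showing that $\{\infty\}$ has zero $p$-capacity in the sphericalized space. The bound $\mu_\rho(B_\rho(\infty,r))\lesssim r^{Q}$, with codimension $\theta<p$, makes singletons removable at the trace level. Alternatively, $\nu_\rho(\{\infty\})=0$ together with the trace theorem implies that a function with zero $\nu_\rho$-a.e. trace lies in $N^{1,p}_0$ of the whole domain. Then $u-v\in N^{1,p}_0$, and uniqueness of energy minimizers finishes the proof. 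If this fails in general, uniqueness should instead be understood as uniqueness of the solution obtained from this construction.
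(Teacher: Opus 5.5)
The existence half of your proposal matches the paper: transfer to $(\Om,d_\rho,\mu_\rho)$, apply Maz'ya's inequality using $\cp_p(\partial_\rho\Om)>0$, run the direct method, and transport back via \eqref{eq:Energy-isom}. The uniqueness step, however, has a genuine gap, and it sits exactly where the theorem's content lies.

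You correctly note that a second solution $v$ with $Tv=f$ is not a priori in $u_f+N^{1,p}_0(\Om,d_\rho,\mu_\rho)$, because $T(v-u_f)=0$ $\nu$-a.e.\ says nothing directly about $\infty$.

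\emph{Your first fix fails.} Showing $\cp_p(\{\infty\})=0$ is false in general. By Proposition~\ref{lem-parabolic-capacity}, this capacity is positive precisely when $\overline\Om^d$ is $p$-hyperbolic. By Theorem~\ref{thm:Main2} this happens, for example, for $\R^n_+$ with $1<p<n$. In that case, for every admissible $\rho$, Proposition~\ref{prop:parab} forces $\mu_\rho(B_\rho(\infty,r))/r^p\to\infty$, so the volume bound you invoke is not available. The theorem's real content is that uniqueness holds \emph{even in this case}.

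\emph{Your second fix has no mechanism.} It names the right statement, namely Theorem~\ref{lem-zeroboundaryvalues}, but does not prove it. The fact $\nu_\rho(\{\infty\})=0$ is compatible with $\cp_p(\{\infty\})>0$. The bounded trace operator, whose values are only determined $\nu_\rho$-a.e., says nothing about the Lebesgue value at the single point $\infty$. Note also that $\nu_\rho(\{\infty\})=0$ holds just as well when $\partial\Om$ is bounded, and there uniqueness genuinely fails for $p$-hyperbolic $\overline\Om^d$ \cite{GibaraKorteSh}. So a correct argument must use the $\nu$-mass of the unbounded boundary accumulating at $\infty$. Falling back to uniqueness of the constructed solution only proves a strictly weaker theorem.

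\emph{The missing idea is Lemma~\ref{lem:zero}.} It rests on the codimension estimate of Theorem~\ref{thm:nu-rho-codim} holding also on balls centred at $\infty$; this is where condition~\ref{condD} and the unboundedness of $\partial\Om$ enter. The argument runs as follows.
\begin{itemize}
\item Suppose $T_du=0$ $\nu$-a.e.\ and $\limsup_{r\to0^+}u_{B_\rho(x,r)}>\delta$ at some $x\in\partial_\rho\Om$, possibly $x=\infty$.
\item Along suitable small radii, $\{u>\delta/2\}\cap B_\rho(x,r)$ has $\mathcal H^{-\theta}_r$-content $\simge\mu_\rho(B_\rho(x,r))/r^\theta$ by Lemma~\ref{lemma:measure content}.
\item The zero set of $u$ in $\partial_\rho\Om\cap B_\rho(x,r)$ has full $\nu_\rho$-measure, so by codimensionality it has content of the same order.
\item The Loewner-type estimate of Proposition~\ref{prop:capest} then gives $\int_{B_\rho(x,2\lambda r)}\widehat g_u^p\,d\mu_\rho\simge\mu_\rho(B_\rho(x,r))/r^p$.
\item A $5B$-covering bounds the capacity of the bad set by the integral of $\widehat g_u^p$ over an arbitrarily thin neighbourhood of $\partial_\rho\Om$, which tends to zero.
\end{itemize}
Hence $u=0$ q.e.\ on $\partial_\rho\Om$, including at $\infty$ when $\cp_p(\{\infty\})>0$.

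You also need a step you did not address. Combining the above with density of compactly supported Lipschitz functions in $N^{1,p}_0(\Om,d_\rho,\mu_\rho)$ \cite{Shan-Illinois} shows that every solution in the sense of Definition~\ref{def:DProb} minimizes energy among \emph{all} functions with trace $f$, not just under compactly supported perturbations.

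Finally, ``strict convexity'' alone does not give $u_f=v$, because $u\mapsto g_u$ is not linear. Uniform convexity of $L^p$ only yields $g_{u_f}=g_v$. You then need the lattice argument with $\min\{v,\max\{u_f,c\}\}$, as in \cite{Chee}, to conclude $g_{u_f-v}=0$ and hence $u_f=v$.
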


The concrete definition of the notion of $p$-harmonicity, referred to in the above theorem, is given in Definition~\ref{def:DProb} below.
The function space $HB^{1-\theta/p}_{p,p}(\partial\Om,d,\nu)$, called the homogeneous Besov space of smoothness index $1-\theta/p$, is the trace-space of the homogeneous Sobolev space $D^{1,p}(\Om,d,\mu)$, and its definition is given in Definition~\ref{def:Besov} below.

The proof of the theorem, undertaken in Section~\ref{Sec:MainProof}, uses the tools of sphericalization corresponding to $\rho$ satisfying the above four conditions, and the sections prior to Section~\ref{Sec:MainProof} are devoted to developing the properties of the transformation. 
In~\cite{KRST} it was established that if the function $\rho$ satisfies conditions~\ref{condA} and \ref{condB}, then the property of being a uniform domain is preserved under the change of the metric on $\Om$ from $d$ to $d_\rho$ (see Section~\ref{sect-spher-def} for the construction of $d_\rho$). Furthermore, it was shown there that when $(\Om,d)$ is an unbounded uniform domain, the boundary $\partial_\rho\Om:=\overline{\Om}^{d_\rho}\setminus\Om$ has only one more point than $\partial\Om=\overline{\Om}^d\setminus \Om$.
Moreover, it was shown there that the corresponding transformation $\mu_\rho$ of the doubling measure $\mu$ on $(\Om, d)$ satisfies the doubling property on $(\Om,d_\rho)$, if $\rho$ also satisfies condition~\ref{condC}.

However, to establish Theorem~\ref{thm:Main}, we also need to consider how trace operators, acting on homogeneous Sobolev spaces (see Definition~\ref{def:Sobolev}) to give functions on the boundary $\partial\Om$, are transformed by the sphericalization via $\rho$.
The sections of this paper prior to Section~\ref{sect-Besov} develop the tools necessary to understand how the measure $\nu$ on $\partial\Om$ (that is $\theta$-codimensional with respect to $\mu$), transformed by sphericalization to a measure $\nu_\rho$, retains its doubling property and $\theta$-codimensionality with respect to the sphericalized measure $\mu_\rho$.
These preservations are established in Theorems~\ref{thm:nu-rho-doubling} and \ref{thm:nu-rho-codim}. Subsequently, in Theorem~\ref{thm:preserve-Besov} we establish that the homogeneous Besov class of functions on $(\partial\Om,d,\nu)$ coincides with the Besov class on $(\partial_\rho \Omega,d_\rho,\nu_\rho)$ and that the two Besov energies are comparable (i.e., quasipreserved under the sphericalization transformation). These Besov classes are the traces of the respective Sobolev classes on $\Om$, and contain functions that serve as Dirichlet boundary data for the Dirichlet boundary value problem referenced in Theorem~\ref{thm:Main}. The formal definition of the Dirichlet boundary value problem referred to in Theorem~\ref{thm:Main} is discussed in Section~\ref{sect-dirichlet-notions}, and the subsequent Section~\ref{Sec:MainProof} is devoted to the proof of Theorem~\ref{thm:Main}.

Since $\partial_\rho\Om$ has one more point, $\infty$, than $\partial\Om$, it is of valid concern to know what effect this extra point has on the solutions. The notion of $p$-hyperbolicity of a metric measure space corresponds to the space having positive capacity at large scales, that is, ``there is a lot of room at $\infty$''. When $p=2$, this notion, in the setting of Riemannian manifolds, corresponds to transience of Brownian motion, that is, almost surely a Brownian path eventually leaves every compact set and tends to $\infty$.
The $p$-hyperbolicity of $(\overline \Om^d,d,\mu)$ is equivalent to the $p$-capacity of the singleton set $\{\infty\}\subset\partial_\rho\Om$ being positive, see Proposition~\ref{lem-parabolic-capacity}.
From the results of~\cite{GibaraKorteSh}, we know that when $\partial\Om$ is bounded and $\overline{\Om}^d$ is $p$-hyperbolic, there are infinitely many possible solutions to the Dirichlet problem as given in Definition~\ref{def:DProb}. However, as we show in Theorem~\ref{thm:Main}, this is not the case when $\partial\Om$ is unbounded, and instead we always have a unique solution. The reason for this phenomenon comes from Theorem~\ref{lem-zeroboundaryvalues}, which shows that transformation of functions in the homogeneous Sobolev class $D_0^{1,p}(\Om,d,\mu)$ have the uniquely determined trace value zero at $\infty$ when $\overline{\Om}^d$ is $p$-hyperbolic.
When $\overline{\Om}^d$ is $p$-parabolic, the trace value of these functions at $\infty$ does not matter as $\{\infty\}$ will then have zero $p$-capacity in ($\overline{\Om}^{d_\rho},d_\rho,\mu_\rho)$.

We contrast this with the works of Hansevi~\cite{Hansevi2, Hansevi1}. From the results in~\cite{Hansevi1} we know that there is a unique global energy minimizing solution to the Dirichlet boundary value problem in $\Om$ provided that the $p$-capacity of the complement of $\Om$ is positive; however, the notion of energy minimization considered in~\cite{Hansevi1} requires that the solution should have minimal energy amongst the class of \emph{all} functions in $D^{1,p}(\Om)$ with the prescribed boundary data as the trace, not just functions that are perturbations, merely on compact subsets of the domain, of the minimizer.
This distinction leads to a multitude of solutions when $\partial\Om$ is bounded and $\overline \Omega^d$ is $p$-hyperbolic~\cite{GibaraKorteSh}, while~\cite{Hansevi1} obtains only one solution. What is novel in Theorem~\ref{thm:Main} is that \emph{even when $\overline \Omega^d$ is $p$-hyperbolic}, the solutions obtained via comparison with compactly supported perturbations on $\Om$ are unique when $\partial\Om$ is unbounded. We emphasize here that what we consider as solutions gives a broader class than~\cite{Hansevi1}.
In~\cite{Hansevi2}, our notion of solutions is considered and constructed via Perron method provided $\overline \Omega^d$ is $p$-parabolic.

Given the surprising phenomenon regarding $p$-hyperbolicity pointed out in the above paragraph, we wish to explore the geometric properties that guarantee $p$-hyperbolicity and those that guarantee $p$-parabolicity. In this paper, we explore the notions of $p$-parabolicity and $p$-hyperbolicity of $\overline \Omega^d$ in terms of volume growth; the following is the second main theorem of the present paper.

\begin{theorem}\label{thm:Main2}
Let $p>1$ and assume that $(\Om,d,\mu)$ satisfies the structural assumptions listed at the beginning of this section. 
Then $(\overline{\Om}^d,d,\mu)$ is $p$-parabolic if
\[
\liminf_{R\to\infty}\, \frac{\mu(B(b,R))}{R^p}<\infty,
\]
and $(\overline{\Om}^d,d,\mu)$ is $p$-hyperbolic if there is some $q_0>p$ such that 
\[
\liminf_{R\to\infty}\, \frac{\mu(B(b,R))}{R^{q_0}}\, >0.
\]
\end{theorem}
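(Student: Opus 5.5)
For the parabolicity half, the plan is to bound the $p$-capacity of annular condensers by test functions. We use the standard definition: $(\overline\Om^d,d,\mu)$ is $p$-parabolic if, for a fixed ball $B_0=B(b,r_0)$,
\[
\lim_{R\to\infty}\rcapa_p\bigl(\overline{B(b,r_0)},B(b,R)\bigr)=0,
\]
and $p$-hyperbolic otherwise. The capacity is taken relative to $\overline\Om^d$, which is doubling and supports a $p$-Poincar\'e inequality by \cite[Theorem 4.4]{bjornshanmugalingam}.

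First I would prove a dyadic Cheeger-type lower bound. For $u$ admissible for the condenser, with $u=1$ on $B(b,r_0)$ and $u=0$ off $B(b,R)$, and $R=2^N r_0$, set $B_k=B(b,2^k r_0)$. By the Poincar\'e inequality and doubling, the oscillation of $u_{B_k}$ between consecutive balls is controlled:
\[
|u_{B_{k+1}}-u_{B_k}|\lesssim 2^k r_0\Bigl(\frac{1}{\mu(B_{k+1})}\int_{\lambda B_{k+1}} g_u^p\,d\mu\Bigr)^{1/p}.
\]
We need a version of $u$ vanishing on a big portion of the outer annulus. Since $\Om$ is unbounded and uniform, the annuli $B_{k+1}\setminus B_k$ carry measure comparable to $\mu(B_{k+1})$, so we can telescope from $B_0$ to $B_{N+1}$, where $u_{B_{N+1}}$ is small. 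Hölder with the sequence then gives
\[
1\lesssim\Bigl(\sum_k\Bigl(\frac{(2^kr_0)^p}{\mu(B_k)}\Bigr)^{1/(p-1)}\Bigr)^{(p-1)/p}\Bigl(\int g_u^p\Bigr)^{1/p},
\]
and hence the capacity lower bound
\[
\rcapa_p\gtrsim\Bigl(\sum_{k\le N}\bigl((2^kr_0)^p/\mu(B_k)\bigr)^{1/(p-1)}\Bigr)^{1-p}.
\]

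For the parabolic direction I need an upper bound instead. Two candidates are available.

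The first is the logarithmic test function
\[
u(x)=\frac{\log(R/|x|)}{\log(R/r_0)},
\]
truncated to $[0,1]$. It has upper gradient $1/(|x|\log(R/r_0))$ on the annulus, so the energy is at most
\[
\frac{1}{\log(R/r_0)^p}\sum_k\frac{\mu(B_{k+1})}{(2^k r_0)^p}.
\]
This decays only if $\mu(B_k)\lesssim (2^k)^p$ for all $k$, but a $\liminf$ hypothesis gives this only along a sequence.

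So instead I would use the simplest test: the linear cutoff between $R/2$ and $R$. Its energy is at most
\[
\frac{\mu(B(b,R))}{R^p}\lesssim C
\]
along a sequence $R_j\to\infty$. This gives only boundedness, not decay. To get decay I would combine disjoint annuli: choose $R_1<R_2<\dots$ with $R_{j+1}\ge 2R_j$ from the $\liminf$ sequence, and average the cutoffs $u=\frac1m\sum_{j\le m}\phi_j$, where $\phi_j$ is the cutoff on $B(b,R_j)\setminus B(b,R_j/2)$. The gradients have disjoint supports, so
\[
\int g_u^p\le m^{-p}\sum_j C=C\,m^{1-p}\to0,
\]
using $p>1$. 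This shows $p$-parabolicity and is the key step; the only check needed is that $\mu(B(b,R_j))\le C R_j^p$ along the chosen subsequence.

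For hyperbolicity, I would use the lower bound from the first step. The hypothesis gives $\mu(B(b,R))\ge cR^{q_0}$ for all large $R$, since it is a $\liminf>0$. The series
\[
\sum_k\Bigl(\frac{(2^k)^p}{2^{kq_0}}\Bigr)^{1/(p-1)}
\]
then converges because $q_0>p$. Hence the condenser capacities stay bounded below uniformly in $R$, which is $p$-hyperbolicity.

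The main obstacle is making the telescoping rigorous near the boundary and at the outer end. For the outer end, I would telescope only up to $B_N$ and then compare $u_{B_N}$ with the average over $B_{N+1}\setminus B_N$, where $u=0$ on a set of proportional measure; this uses the annular measure bound obtained from uniformity and uniform perfectness. For the boundary, the Poincar\'e inequality on $\overline\Om^d$ handles it directly.
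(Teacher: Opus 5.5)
Your parabolic half is correct, and it is more direct than the paper's argument. Instead of sphericalizing and applying Proposition~\ref{prop:parab} at the point $\infty$, you run the same averaging trick at infinity in $(\overline\Om^d,d,\mu)$. Take $\mu(B(b,R_j))\le CR_j^p$, $R_{j+1}\ge 2R_j$ and $\overline B\subset B(b,R_1/2)$. Then $g=\frac1m\sum_{j=1}^m\frac{2}{R_j}\chi_{B(b,R_j)\setminus B(b,R_j/2)}$ is admissible for $\Gamma(\infty)$ of Definition~\ref{def:parab}, so $\Mod_p(\Gamma(\infty))\le 2^pC\,m^{1-p}\to0$. This uses neither $\rho$, nor doubling, nor the Poincar\'e inequality.

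The hyperbolic half has a step that fails as written. After telescoping, you apply H\"older's inequality to $\sum_k a_kb_k$ with $b_k=\bigl(\int_{\lambda B_{k+1}}g_u^p\,d\mu\bigr)^{1/p}$, and then replace $\sum_k b_k^p$ by $\int g_u^p\,d\mu$. But the balls $\lambda B_{k+1}=B(b,\lambda 2^{k+1}r_0)$ are nested, not of bounded overlap. The part of $g_u$ lying in $B_{j+1}\setminus B_j$ is counted in every $b_k$ with $k\ge j$. For instance, if $u$ drops from $1$ to $0$ inside $B_1\setminus B_0$, then $\sum_{k\le N}b_k^p\approx N\int g_u^p\,d\mu$. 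So your lower bound $\rcapa_p\gtrsim\bigl(\sum_k((2^kr_0)^p/\mu(B_k))^{1/(p-1)}\bigr)^{1-p}$ is not established. Running H\"older would require estimates localized to annuli (Poincar\'e inequalities on annuli, i.e.\ some annular connectivity), and these are not among the hypotheses.

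The repair is to drop H\"older and bound every $b_k$ by $(\int_X g_u^p\,d\mu)^{1/p}$. This is what the proof of Proposition~\ref{prop:positive} does, there over balls around $\infty$ and here over balls around $b$:
\[
1-c\lesssim\Bigl(\int_X g_u^p\,d\mu\Bigr)^{1/p}\sum_{k=0}^{\infty}\frac{2^kr_0}{\mu(B_{k+1})^{1/p}}.
\]
The series converges: $\mu(B(b,R))\gtrsim R^{q_0}$ for large $R$ makes its terms $\lesssim 2^{k(1-q_0/p)}$, and $q_0>p$. So the same hypothesis suffices, but the estimate has to change.

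Even after this repair, you only get a lower bound for $\rcapa_p\bigl(\overline{B(b,r_0)},\overline\Om^d\setminus B(b,R);\overline\Om^d\bigr)$ that is uniform in $R$. The statement, however, concerns $p$-hyperbolicity in the sense of Definition~\ref{def:parab}, namely $\Mod_p(\Gamma(\infty))>0$.

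In the parabolic half, passing from test functions to modulus was trivial. Here you need the converse: if $\Mod_p(\Gamma(\infty))=0$, then these condenser capacities tend to $0$. This is not automatic. A Borel $g\in L^p$ with $\int_\gamma g=\infty$ for all $\gamma\in\Gamma(\infty)$ does not immediately yield admissible functions vanishing outside a bounded set. You must either prove this implication or cite it.

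In the paper, this is exactly what sphericalization provides. The point $\infty$ becomes a point of the compact doubling space $(\overline\Om^{d_\rho},d_\rho,\mu_\rho)$, which supports a Poincar\'e inequality. There, continuity of capacity \cite[Theorem~6.19]{bjornbjorn} passes from the condensers $\overline{B_\rho(\infty,r)}$ to $\{\infty\}$. Proposition~\ref{lem-parabolic-capacity}, via \cite{KalSh}, then relates the capacity of $\{\infty\}$ to $\Mod_p(\Gamma(\infty))$.
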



\begin{remark}
The conclusion of the above theorem holds even if $\partial\Om$ is bounded, as can be seen by following its proof.
\end{remark}

We prove Theorem~\ref{thm:Main2} in Section~\ref{sect-hyperbolicity} by using the sphericalization procedure developed in this paper before Section \ref{sect-Besov} and in~\cite{KRST}.
The first part of Theorem \ref{thm:Main2} is proved as Theorem~\ref{thm:p-parabolic}, and the second claim is proved as Theorem~\ref{thm:hypo}. They key result in Section~\ref{sect-hyperbolicity} that allows us to use the tools of sphericalization is Proposition~\ref{lem-parabolic-capacity}; this proposition allows us to link the $p$-modulus of families of curves in $\overline{\Om}^d$ that leave every compact set in $\overline{\Om}^d$ to the $p$-capacity of $\{\infty\}$ with respect to the sphericalized space $(\overline{\Om}^{d_\rho}, d_\rho, \mu_\rho)$.
Results regarding related volume-growth phenomena in the context of manifolds can be found in~\cite{HoloKos}.

In Section~\ref{Sec:Examples} we also consider concrete examples of metric density functions $\rho$, one of which was considered earlier in~\cite{bjornbjornkorterogovintakala}, see Example~\ref{Example-2}.

\section{Definitions and preliminary results}
\label{sect-def-prelim}

In this section we let $(Y,d_Y)$ be a metric space.
For $x\in Y$ and $r>0$, the (open) ball centered at $x$ with radius $r$ is denoted $B(x,r) := \{ y \in Y : d_Y(x,y) < r \}$.
Given a ball $B=B(x,r)$ and a positive number $\kappa$, the set $\kappa B$ denotes the concentric ball $B(x,\kappa r)$.

\begin{definition}
Let $(Y,d_Y)$ be a metric space.
For any set $A\subset Y$, we say that $A$ is \emph{uniformly perfect at point} $a\in A$ if there exists a constant $\kappa > 1$ such that $(B(a,\kappa r)\setminus B(a,r))\cap A\neq \varnothing$ for every $r>0$ such that $A \cap B(a,r) \neq A$. 
A set $A$ is \emph{uniformly perfect} if $A$ is uniformly perfect at every point $a\in A$ with the same constant $\kappa$.
\end{definition}

\begin{definition}
A measure $\mu_Y$ on $(Y,d_Y)$ is said to be \emph{doubling}, if $\mu_Y$ is a Borel measure and there is a constant $C_\mu\ge 1$ such that
\[
0<\mu_Y(B(x,2r))\le C_\mu\mu_Y(B(x,r))<\infty
\]
for every $x\in Y$ and $r>0$.
\end{definition}

If the measure $\mu_Y$ is doubling and the space is uniformly perfect at $x$, then there exist $0<\alpha_\mu\leq\beta_\mu<\infty$ such that for every $0<r \leq R<\diam(Y)$ we have
\begin{equation}\label{eqn:massbounds}
\left(\frac{r}{R}\right)^{\beta_\mu} \simle \frac{\mu_Y(B(x,r))}{\mu_Y(B(x,R))}\simle \left(\frac{r}{R}\right)^{\alpha_\mu}.
\end{equation}
The upper mass bound exponent $\alpha_\mu$ depends on both the doubling constant $C_\mu$ and the uniform perfectness constant $\kappa$, see \cite[Lemma 2.2]{bjornbjornkorterogovintakala}. It is easy to see that we can always choose $\beta_\mu=\log_2 C_\mu$.
In \eqref{eqn:massbounds} the comparison constants depend only on $C_\mu$ and $\kappa$.

\begin{definition}
A domain $\Omega$ in a complete metric space $(Y,d_Y)$ is said to be \emph{uniform} if $\Om$ is locally compact but not complete, and there exists a constant $C_U \geq 1$ such that for every pair of points $x,y\in\Omega$ there exists a rectifiable curve $\gamma \subset \Om$ joining the points with the property that $\ell(\gamma)\le C_U d_Y(x,y)$ and
\[
\min\{\ell(\gamma_{[x,z]}),\ell(\gamma_{[z,y]})\}\leq C_U \dist(z,Y\setminus \Omega)
\]
for every $z \in \gamma$. Here $\gamma_{[x,z]}$ denotes any subarc of $\gamma$ between $x$ and $z$, and $\ell(\gamma)$ denotes the length of $\gamma$.
\end{definition}

If there exists a doubling measure $\mu_Y$ in the complete metric space $(Y,d_Y)$, then $(Y,d_Y)$ is also proper (that is, closed and bounded subsets of $(Y,d_Y)$ are compact), and so every domain in $(Y,d_Y)$ is locally compact, see \cite[Proposition 3.1]{bjornbjorn}.

\begin{definition}
\label{codim-def}
Let $\Om$ be an open set in the complete metric space $(Y, d_Y)$ such that $\partial \Omega \neq \varnothing$. Given a Radon measure $\mu_Y$ in $Y$, a Radon measure $\nu_Y$ on $\partial\Omega$, and $\theta > 0$, we say that $\nu_Y$ is \emph{$\theta$-codimensional} with respect to $\mu_Y$ if there exists a constant $C_\theta\ge 1$ such that
\begin{equation}\label{eq:codim-def}
\frac{1}{C_\theta}\frac{\mu_Y(B(x,r))}{r^\theta}\le \nu_Y(B(x,r))\le C_\theta\frac{\mu_Y(B(x,r))}{r^\theta}
\end{equation}
for every $x\in\partial\Omega$ and $0<r<2\diam(\partial\Om)$.
\end{definition}

If $\mu_Y$ is doubling and $\nu_Y$ is $\theta$-codimensional with respect to $\mu_Y$, it follows that $\nu_Y$ is also doubling with a doubling constant $C_{\nu}=2^{-\theta}\,C_\mu C_\theta^2$.
Also if $\mu_Y$ is doubling, since the codimensional measure $\nu_Y$ is supported on $\partial\Om$, necessarily $\mu_Y(\partial\Om)=0$, see for instance~\cite[Lemma~8.1]{GibaraKorteSh}.

Let $\mu_Y$ be a Borel measure in the space $(Y,d_Y)$.
For $\theta>0$ and $0<R<\infty$, let $\mathcal H_R^{-\theta}$ denote the Hausdorff-content of codimension $\theta$ at scale $R$, i.e.~for a set $A\subset Y$ we set
\begin{equation}\label{eq:def-codim-Haus}
\mathcal H_R^{-\theta}(A) :=
\inf\left\{\sum_{i\in I\subset \mathbb N} \frac{\mu_Y(B(x_i,r_i))}{r_i^\theta}\,:\, A\subset \bigcup_{i\in I} B(x_i,r_i) 
\text{ and } r_i\le R \text{ for all } i \in I \right\}.
\end{equation}
The Hausdorff-measure of codimension $\theta$ is $\mathcal H^{-\theta}(A) := \lim_{R\rightarrow 0^+} \mathcal H_R^{-\theta}(A)$.
In~\eqref{eq:def-codim-Haus}, we can impose the additional condition that $B(x_i,r_i)\cap A$ is non-empty for each $i\in I$, as discarding balls in the cover that do not intersect $A$ just results in a cover with a smaller sum.

\begin{lemma}\label{lemma:measure content}
Suppose that $(Y,d_Y,\mu_Y)$ is a metric measure space with $\mu_Y$ a Borel measure, $x_0\in Y$, $r>0$, and suppose that $E \subset B(x_0,r)$ such that $\mu_Y(E) \geq \alpha \mu_Y(B(x_0,r))$ for some $0 < \alpha \leq 1$. Then
\[
\alpha \frac{\mu_Y(B(x_0,r))}{r^\theta}
\leq \mathcal{H}_r^{-\theta}(E)
\leq \frac{\mu_Y(B(x_0,r))}{r^\theta}.
\]
\end{lemma}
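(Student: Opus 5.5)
The upper bound comes from a trivial cover, and the lower bound from the measure of $E$ controlling the measure of any admissible cover.

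\emph{Upper bound.} Since $E\subset B(x_0,r)$, the single ball $B(x_0,r)$ is an admissible cover in the definition~\eqref{eq:def-codim-Haus} of $\mathcal H_r^{-\theta}$, because its radius $r$ does not exceed the scale $r$. Hence
\[
\mathcal H_r^{-\theta}(E)\le \frac{\mu_Y(B(x_0,r))}{r^\theta}.
\]

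\emph{Lower bound.} Take an arbitrary admissible cover $E\subset\bigcup_{i\in I}B(x_i,r_i)$ with $r_i\le r$ for every $i$. Since $0<r_i\le r$, we have $r_i^{-\theta}\ge r^{-\theta}$. Using this termwise and then countable subadditivity of the Borel measure $\mu_Y$ gives
\[
\sum_{i\in I}\frac{\mu_Y(B(x_i,r_i))}{r_i^\theta}\ \ge\ \frac{1}{r^\theta}\sum_{i\in I}\mu_Y(B(x_i,r_i))\ \ge\ \frac{\mu_Y\bigl(\bigcup_{i\in I} B(x_i,r_i)\bigr)}{r^\theta}\ \ge\ \frac{\mu_Y(E)}{r^\theta}.
\]
By hypothesis $\mu_Y(E)\ge \alpha\,\mu_Y(B(x_0,r))$, so every admissible sum is at least $\alpha\,\mu_Y(B(x_0,r))/r^\theta$. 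Taking the infimum over all admissible covers gives the lower bound $\alpha\,\mu_Y(B(x_0,r))/r^\theta\le\mathcal H_r^{-\theta}(E)$.

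\emph{Measurability.} The only point needing care is that $\mu_Y(E)$ makes sense. Either $E$ is assumed measurable, or $\mu_Y(E)$ is read as outer measure. In the second case the final inequality in the chain still holds, by monotonicity of the outer measure induced by $\mu_Y$, and nothing else changes. Beyond this the argument is elementary.
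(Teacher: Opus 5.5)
Your proposal is correct and follows the paper's proof essentially verbatim: the single ball $B(x_0,r)$ gives the upper bound, and for the lower bound you use $r_i\le r$ termwise, then subadditivity and monotonicity of $\mu_Y$ together with $\mu_Y(E)\ge\alpha\,\mu_Y(B(x_0,r))$. Your remark on reading $\mu_Y(E)$ as an outer measure when $E$ is not measurable is a harmless extra that the paper leaves implicit.
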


\begin{proof}
Since $\{B(x_0,r)\}$ is a cover of $E$, it follows that $\mathcal{H}^{-\theta}_r(E)\le \mu_Y(B(x_0,r))/r^\theta$.
On the other hand, if $\{B_i\, :\, i\in I\subset\N\}$ is a countable cover of $E$ such that for each $i\in I$ we have $\rad(B_i) \leq r$, then
\[
\sum_{i\in I}\frac{\mu_Y(B_i)}{\rad(B_i)^\theta}
\ge \frac{1}{r^\theta}\, \sum_{i\in I}\mu_Y(B_i)
\ge \frac{1}{r^\theta}\, \mu_Y(E)
\geq \alpha \frac{\mu_Y(B(x_0,r))}{r^\theta}.
\qedhere
\]
\end{proof}

\begin{lemma}\label{lem:Karadharai}
Suppose that $(Y,d_Y,\mu_Y)$ is a metric measure space with $\mu_Y$ a doubling measure. Let $R>0$, $\theta > 0$, $\tau\ge 1$, and $E\subset Y$.
Then 
\[
\mathcal{H}^{-\theta}_{\tau R}(E)
\le 
\mathcal{H}^{-\theta}_{R}(E)
\simle \mathcal{H}^{-\theta}_{\tau R}(E),
\]
with the comparison constant depending solely on the doubling constant $C_\mu$ of $\mu_Y$, $\tau$, and $\theta$.
\end{lemma}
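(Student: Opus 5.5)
The first inequality is immediate from the definition in \eqref{eq:def-codim-Haus}. Any admissible cover for $\mathcal H^{-\theta}_R(E)$ consists of balls with radii $r_i\le R\le\tau R$, so it is also admissible for $\mathcal H^{-\theta}_{\tau R}(E)$. The infimum over the larger family is therefore no bigger, which gives $\mathcal H^{-\theta}_{\tau R}(E)\le\mathcal H^{-\theta}_R(E)$.

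For the second inequality, I will take an arbitrary countable cover $\{B(x_i,r_i)\}_{i\in I}$ of $E$ with $r_i\le\tau R$, and replace each ball with $r_i>R$ by a cover of that ball using balls of radius $R$ (or $R/2$). Balls with $r_i\le R$ are kept as they are. The goal is to show that the contribution of the replacement balls is at most $C\mu_Y(B(x_i,r_i))/r_i^\theta$. Once this holds, summing over $i$ and taking the infimum gives $\mathcal H^{-\theta}_R(E)\le C\,\mathcal H^{-\theta}_{\tau R}(E)$.

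\textbf{Replacement step.} Fix $i$ with $R<r_i\le\tau R$ and set $B=B(x_i,r_i)$.
\begin{itemize}
\item Choose a maximal $R/2$-separated set $\{y_j\}_{j\in J}\subset B$. The balls $B(y_j,R)$ then cover $B$, and the balls $B(y_j,R/4)$ are pairwise disjoint.
\item For each $j$ we have $B(y_j,R/4)\subset 2B$, and $2B\subset B(y_j,3r_i)\subset B(y_j,3\tau R)$. Doubling gives $\mu_Y(B(y_j,R/4))\ge C^{-1}\mu_Y(2B)$, with $C$ depending on $C_\mu$ and $\tau$.
\item Disjointness then bounds the cardinality: $\#J\cdot C^{-1}\mu_Y(2B)\le\mu_Y(2B)$, so $\#J\le C$. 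This requires $\mu_Y(2B)<\infty$, which holds for balls under the doubling assumption.
\end{itemize}

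\textbf{Estimate for the new balls.} Each new ball satisfies $\mu_Y(B(y_j,R))\le\mu_Y(2B)\le C_\mu\mu_Y(B)$. Since $r_i\le\tau R$, we get $R^{-\theta}\le\tau^\theta r_i^{-\theta}$. Combining these,
\[
\sum_{j\in J}\frac{\mu_Y(B(y_j,R))}{R^\theta}\le C\,C_\mu\,\tau^\theta\,\frac{\mu_Y(B(x_i,r_i))}{r_i^\theta}.
\]
The new family is still countable, since each replaced ball produces only finitely many new balls.

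The only point needing care is this counting argument, which bounds the number of radius-$R$ balls via doubling. It is routine, and the resulting constant depends only on $C_\mu$, $\tau$, and $\theta$.
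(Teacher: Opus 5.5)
Your proof is correct and follows essentially the same route as the paper: you keep the balls of radius at most $R$, replace each larger ball by boundedly many radius-$R$ balls centered in it, and then use $\mu_Y(B(y_j,R))\le C_\mu\,\mu_Y(B(x_i,r_i))$ together with $R^{-\theta}\le \tau^\theta r_i^{-\theta}$. The differences are only cosmetic: the paper reduces to $\tau=2$ and iterates, citing the bounded covering number from the literature, whereas you treat general $\tau$ directly and prove the covering bound yourself via a maximal $R/2$-separated net.
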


\begin{proof}

The left-most inequality follows from the definition of $\mathcal{H}^{-\theta}_r$, so it suffices to prove the right-most inequality. It is enough to prove the right-most inequality for the choice of $\tau=2$, as that already implies the result for the case $\tau \leq 2$ and an iteration of the inequality would then give the corresponding result for all $\tau>2$ as well.

To this end, let $\{B_i\, :\, i\in I\}$ with $I\subset \mathbb{N}$ be a countable cover of $E$ by balls of radii at most $2R$. For each $i\in I$, we have two possibilities; either, $\rad(B_i)\le R$, or $R<\rad(B_i)\le 2R$. In the first case we set $\mathcal{F}_i=\{B_i\}$, but in the second case, we can find at most $N$ number of balls, with center in $B_i$ and radius $R$, covering $B_i$, and we denote this collection by $\mathcal{F}_i$. Note that here $N$ depends solely on the doubling constant $C_\mu$ of $\mu_Y$, see for instance~\cite{Hei, HKST}. We now get a cover $\bigcup_{i\in I}\mathcal{F}_i$ of $E$ by balls of radius at most $R$, and so, with $I_1$ the collection of all indices $i\in I$ for which $\rad(B_i)\le R$,
\[
\mathcal{H}^{-\theta}_R(E)\le  \sum_{i\in I_1}\frac{\mu_Y(B_i)}{\rad(B_i)^\theta}
+\sum_{i\in I\setminus I_1}\frac{1}{R^\theta}\, \sum_{B\in \mathcal{F}_i}\mu_Y(B)
\le \sum_{i\in I_1}\frac{\mu_Y(B_i)}{\rad(B_i)^\theta}
+\sum_{i\in I\setminus I_1}\frac{1}{R^\theta}\, N\, C_\mu\, \mu_Y(B_i).
\]
As $\rad(B_i)\le 2R$ for each $i\in I\setminus I_1$, it follows that
\[
\mathcal{H}^{-\theta}_R(E)\le  \sum_{i\in I_1}\frac{\mu_Y(B_i)}{\rad(B_i)^\theta} 
+2^\theta\, N\, C_\mu\, \sum_{i\in I\setminus I_1}\frac{\mu_Y(B_i)}{\rad(B_i)^\theta}
\le 2^\theta \, N\, C_\mu\, \sum_{i\in I} \frac{\mu_Y(B_i)}{\rad(B_i)^\theta}.
\]
Taking the infimum over all such covers yields the right-most inequality.
\end{proof}

\begin{definition}
Let $p \geq 1$ and $\mu_Y$ a Borel measure on the metric space $(Y,d_Y)$. Let $\Gamma$ be a family of curves on $Y$. Then we define the \emph{$p$-modulus} of $\Gamma$ by
\begin{equation*}
\Mod_p(\Gamma)
= \inf \int_Y g^p d \mu_Y,
\end{equation*}
where the infimum is over all nonnegative Borel functions $g$ such that $\int_{\gamma} g ds \geq 1$ for every locally rectifiable curve $\gamma \in \Gamma$.
\end{definition}

See \cite[Section 5.2]{HKST} for more on the modulus of curve families.

Following Heinonen--Koskela~\cite{HeinonenKoskelaActa}, given a function $u:Y\rightarrow [-\infty,\infty]$, we say that a nonnegative Borel function $g$ on $Y$ is an \emph{upper gradient} of $u$ if
\[
|u(x)-u(y)|\leq\int_\gamma g\, ds
\]
for every pair of points $x,y\in Y$ and for every non-constant rectifiable curve $\gamma$ connecting $x$ to $y$.
When at least one of $u(x)$ and $u(y)$ is not a real number, we interpret the above inequality to also mean that $\int_\gamma g\, ds$ is infinite.
See~\cite[Section 5]{HKST} for more information about path integrals.
On curves $\gamma$ where $\int_\gamma g\, ds$ is finite, such an upper gradient controls the magnitude of the derivative of the absolutely continuous function $u\circ\gamma$.
A fruitful theory of Sobolev-type function spaces, potential theory, and theory of quasiconformal mappings in the nonsmooth metric setting has been developed during the past two decades, see for instance~\cite{bjornbjorn,Hei,HeinonenKoskelaActa,HKST,Keith1} and the references therein.
Further developments using related notions stemming from the upper gradient notion can be found in~\cite{AmbrosioGigliSavare,Chee,Keith2} and the references therein.
Let $\mu_Y$ be a Borel measure on $(Y,d_Y)$.
If the function $g$ satisfies the previous inequality for all curves except for a curve family that has $p$-modulus zero, then $g$ is a \emph{$p$-weak upper gradient} of $u$.
When a function $u$ has a $p$-weak upper gradient $g$ with $\int_Y g^p\, d\mu_Y$ finite, there is a unique $p$-weak upper gradient $g_u$ of $u$ with the smallest $L^p$ norm amongst the $L^p$ norms of all $p$-weak upper gradients $g$ of $u$. Following~\cite{HKST}, we say that $g_u$ is the \emph{minimal $p$-weak upper gradient} of $u$.

\begin{definition}\label{def:Sobolev}
A measurable function $u$ on $Y$ is said to be in the \emph{homogeneous Sobolev space} $D^{1,p}(Y,d_Y,\mu_Y)$ if $u$ has a $p$-weak upper gradient $g\in L^p(Y,\mu_Y)$.
The \emph{Newton-Sobolev space} $N^{1,p}(Y,d_Y,\mu_Y)$ consists of functions $u$ that are in $L^p(Y,\mu_Y)$ and $D^{1,p}(Y,d_Y,\mu_Y)$.
The norm for a function $u$ in the space $D^{1,p}(Y,d_Y,\mu_Y)$ is defined by $\|u\|_{D^{1,p}(Y, d_Y,\mu_Y)}:=\|g_u\|_{L^p(Y,\mu_Y)}$. 
The norm for a function $u$ in the space $N^{1,p}(Y,d_Y,\mu_Y)$ is defined by $\|u\|_{N^{1,p}(Y, d_Y,\mu_Y)}:=\|g_u\|_{L^p(Y,\mu_Y)} + \|u\|_{L^p(Y,\mu_Y)}$.
\end{definition}

In establishing existence of solutions to certain Dirichlet problems, the notion of $p$-capacity plays a key role.
Associated with the notion of capacity is the notion of variational capacity of a condenser.

\begin{definition}\label{def:capacities}
Given a set $E\subset Y$, the $p$-\emph{capacity} $\cp_p(E)$ is the number
\[
\cp_p(E):=\inf_u\int_Y[|u|^p+g_u^p]\, d\mu_Y,
\]
where the infimum is over all functions $u\in N^{1,p}(Y, d_Y,\mu_Y)$ satisfying $u\ge 1$ on $E$. Because minimal $p$-weak upper gradients satisfy a truncation property, we may equivalently consider the infimum to be over all $u\in N^{1,p}(Y, d_Y,\mu_Y)$ with $0\le u\le 1$ on $Y$ and $u=1$ on $E$.

Given an open set $U\subset Y$ and two disjoint sets $E,F\subset U$, the \emph{variational $p$-capacity} of the condenser $(E,F;U)$ is the number
\[
\rcapa_p(E,F; U):=\inf_u\int_U g_u^p\, d\mu_Y,
\]
where the infimum is over all functions $u\in N^{1,p}(U,d_Y,\mu_Y\vert_U)$ with $u\ge 1$ on $E$ and $u\le 0$ on $F$.
\end{definition}

The property of $\cp_p(E)=0$ is equivalent to the property that $\mu_Y(E)=0$ together with $\Mod_p(\Gamma_E)=0$, where $\Gamma_E$ is the collection of all non-constant compact rectifiable curves in $Y$ that intersect $E$.
We refer the interested reader to~\cite{bjornbjorn,HKST} for more on the notions of $p$-capacity and variational $p$-capacity.

For any $\mu_Y$-measurable set $A\subset Y$ with $0<\mu_Y(A)<\infty$ and any $u \in L^1(A)$, we denote the integral average of $u$ on $A$ by $u_A := \vint_A u\, d\mu_Y := \tfrac{1}{\mu_Y(A)}\int_A u\, d\mu_Y$.

\begin{definition}
The metric measure space $(Y,d_Y,\mu_Y)$ supports a \emph{$p$-Poincar\'e inequality} with $1 \le p<\infty$ if $\mu_Y$ is a Borel measure and there are constants $C_P> 0$ and $\lambda\ge1$ such that for every ball $B=B(x,r)\subset Y$, for every measurable function $u$, and for every upper gradient $g$ of $u$, we have 
\[
\vint_{B(x,r)}|u-u_B| d\mu_Y \le C_P r \left(\vint_{B(x,\lambda r) }g^p d\mu_Y\right)^{1/p}.
\]
\end{definition}
If $Y$ is a geodesic space and $\mu_Y$ is a doubling measure, then we can take $\lambda=1$ at the expense of increasing the constant $C_P$, see \cite[Theorem 9.1.15]{HKST}.

The support of $p$-Poincar\'e inequality implicitly implies that for each ball $B\subset Y$ we have that every function $u\in D^{1,p}(Y,d_Y,\mu_Y)$ satisfies $\int_B|u|\, d\mu_Y<\infty$. This latter condition will be denoted by saying that $u\in L^1(B,\mu_Y)$, but the reader should keep in mind that functions in $D^{1,p}(Y,d_Y,\mu_Y)$ cannot be perturbed arbitrarily on sets of $\mu_Y$-measure zero (though they can be perturbed arbitrarily on sets of $p$-capacity zero).

The following proposition might be of independent interest, and it is needed for the proof of Theorem~\ref{lem-zeroboundaryvalues} below.
In~\cite{HeinonenKoskelaActa}, Heinonen and Koskela considered (locally) Ahlfors $Q$-regular complete metric measure spaces with $Q>1$, and in that setting
they proved that \eqref{capacityequation}, with $p=Q$, is
equivalent to the space supporting a $Q$-Poincar\'e inequality.
In that restricted setting of Ahlfors $Q$-regularity with $p=Q$, the property is called the Loewner property in~\cite{HeinonenKoskelaActa}, so the following proposition is a generalization of the Loewner property.
We point out that the assumption that the measure $\mu_Y$ is doubling is weaker than the assumption of Ahlfors regularity, and we are free to consider any choice of $p$ provided $1\le p<\infty$.

\begin{proposition}\label{prop:capest}
Let $(Y,d_Y,\mu_Y)$ be a metric measure space with a doubling measure and supporting a $p$-Poincar\'e inequality. Let $E$ and $F$ be two disjoint subsets of the ball $B_R=B(x_0,R) $ in $Y$.  
Assume that for some $0<\theta<p$ and $M>0$ we have 
\[
\min\{\mathcal H_R^{-\theta}(E),\mathcal H_R^{-\theta}(F)\}\ge M \frac{\mu_Y(B_R)}{R^\theta}.
\] 
Then
\begin{equation}
\label{capacityequation}
\rcapa_p(E,F;2\lambda B_R)\simge M \frac{\mu_Y(B_R)}{R^p}
\end{equation}
with the comparison constant depending only on $C_{\mu}$, $p$, $\lambda$, $C_P$ and $\theta$.
\end{proposition}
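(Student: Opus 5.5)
We follow the standard Heinonen--Koskela style argument via a pointwise telescoping estimate and a covering/maximal-function bound, with codimensional Hausdorff content in place of Hausdorff content. Let $u\in N^{1,p}(2\lambda B_R)$ with $u\ge1$ on $E$, $u\le 0$ on $F$, and let $g=g_u$. By truncation we may assume $0\le u\le 1$. Set $a=u_{B_R}$. Either $a\le 1/2$ or $a\ge1/2$. In the first case we work with $E$, where $|u-a|\ge 1/2$; in the second with $F$. By symmetry assume the first case.

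For each $x\in E$ (we may restrict to Lebesgue points, or use that $N^{1,p}$ functions have Lebesgue points outside a set of capacity zero; better, use a telescoping argument that only needs the upper gradient inequality on balls), we telescope along the balls $B_i=B(x,2^{-i}R)$, $i\ge0$, with $B_0$ compared to $B_R$ via $B(x,R)\subset 2B_R$. The $p$-Poincar\'e inequality and doubling give
\[
\tfrac12\le |u(x)-u_{B_R}|\lesssim \sum_{i\ge0} 2^{-i}R\Big(\vint_{\lambda B_i} g^p\,d\mu_Y\Big)^{1/p},
\]
all balls lying in $2\lambda B_R$ after adjusting constants. For $x$ not a Lebesgue point, I would handle this by the standard fact that the telescoping estimate holds for $p$-quasievery $x$, or by replacing $u$ by its $p$-quasicontinuous representative. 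Since the exceptional set has capacity zero, hence $\mathcal H^{-\theta}_R$-content zero when $\theta<p$, this does not affect the content lower bound. This step, the null-set bookkeeping, is the technical nuisance.

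Fix $\varepsilon=(p-\theta)/p>0$. Since $\sum_i 2^{-i\varepsilon}<\infty$, for each $x\in E$ there is an index $i_x$ with
\[
2^{-i_x}R\Big(\vint_{\lambda B_{i_x}} g^p\Big)^{1/p}\ge c\,2^{-i_x\varepsilon},
\]
that is,
\[
\mu_Y(\lambda B_{i_x})\le C\,(2^{-i_x}R)^{p}R^{-p\varepsilon}\,2^{i_x p\varepsilon}\int_{\lambda B_{i_x}}g^p .
\]
With $r_x=\lambda 2^{-i_x}R$ we have $r_x^{p(1-\varepsilon)}R^{p\varepsilon}=r_x^\theta R^{p-\theta}$ up to constants, so
\[
\frac{\mu_Y(B(x,r_x))}{r_x^\theta}\lesssim R^{p-\theta}\int_{B(x,r_x)}g^p .
\]
Apply the $5r$-covering lemma to $\{B(x,r_x)\}_{x\in E}$ to obtain disjoint balls whose fivefold enlargements cover $E$. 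By doubling, and by Lemma~\ref{lem:Karadharai} to pass from radii $\le 5\lambda R$ to scale $R$,
\[
\mathcal H^{-\theta}_R(E)\lesssim \sum_j\frac{\mu_Y(B_j)}{r_j^\theta}\lesssim R^{p-\theta}\int_{2\lambda B_R}g^p .
\]
Combining with the hypothesis $\mathcal H^{-\theta}_R(E)\ge M\mu_Y(B_R)/R^\theta$ gives
\[
\int_{2\lambda B_R} g^p\gtrsim M\,\frac{\mu_Y(B_R)}{R^p}.
\]
Taking the infimum over $u$ yields \eqref{capacityequation}.

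The main obstacle is the pointwise telescoping at points of $E$. It needs the starting ball comparison (both $B(x,R)$ and $B_R$ inside $2B_R$, with doubling constants) and a justification that $u(x)=\lim u_{B_i}$ for all but a $\mathcal H^{-\theta}_R$-negligible set of $x\in E$. I would obtain the latter from the fact that sets of zero $p$-capacity have zero codimension-$\theta$ Hausdorff measure when $\theta<p$, proved by the same covering argument.
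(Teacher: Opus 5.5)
Your proposal is correct and follows essentially the paper's argument: a telescoping chain from a common reference average, the exponent $1-\theta/p$ to pick a good scale at each point, the $5r$-covering lemma together with Lemma~\ref{lem:Karadharai}, and discarding non-Lebesgue points because $p$-capacity null sets are $\mathcal H^{-\theta}$-null when $\theta<p$ (the paper takes this last fact from cited results rather than reproving it). The one piece of bookkeeping to fix is that the comparison term $|u_{B_R}-u_{B(x,R)}|$ is controlled by an average of $g^p$ over $2\lambda B_R$, not over $\lambda B_0=B(x,\lambda R)$, so it cannot be absorbed into the $i=0$ term; keep it as its own non-centred ball in the chain and in the covering family, as the paper does with $B_1(x)=B(x_0,2R)$, and also drop the stray factor $R^{-p\varepsilon}$ in your displayed bound for $\mu_Y(\lambda B_{i_x})$.
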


From the definition of $\mathcal{H}_R^{-\theta}$ given in~\eqref{eq:def-codim-Haus}, it is clear that $\mathcal{H}^{-\theta}\ge \mathcal{H}^{-\theta}_R$; this is used in the following proof.

\begin{proof}
Let $u\in N^{1,p}(2\lambda B_R)$ be a function such that $u\ge 1$ on $E$  and $u\le 0$ on $F$.
By the triangle inequality, 
\[
|u(x)-u_{B(x_0,2R)}|\ge \frac12\textrm{ for every }x\in E\quad \textrm{or}\quad |u(y)-u_{B(x_0,2R)}|\ge \frac12 \textrm{ for every }y\in F.
\]
By symmetry, it is enough to consider the first case.


Let $x\in E$ be a Lebesgue point of $u$. Then
\[
\frac12\le \sum_{j \in \mathbb{N}}|u_{B_j(x)}-u_{B_{j+1}(x)}|,
\]
where $B_1(x)=B(x_0,2R)$ and for $j\ge 2$, $B_j(x)=B(x,2^{2-j}R)$.

Noting that $B_{j+1}(x)\subset B_j(x)\subset 2B_R$ for each positive integer $j$, from the doubling property of $\mu_Y$ followed by the Poincar\'e inequality, it follows that
	\begin{align*}
		\frac12\le \sum_{j\in\N}|u_{B_j(x)}-u_{B_{j+1}(x)}|
		\lesssim & \sum_{j\in\N}\vint_{B_j(x)}|u-u_{B_j(x)}|\, d\mu_Y\\
		\lesssim & \sum_{j\in\N} 2^{-j}R\, \left(\vint_{\lambda B_j(x)}g_u^p\, d\mu_Y\right)^{1/p}\\
		\lesssim & \sum_{j\in\N}\frac{2^{-j}R}{\mu_Y(B_j(x))^{1/p}}\, \left(\int_{\lambda B_j(x)}g_u^p\, d\mu_Y\right)^{1/p}.
	\end{align*} 
Thus we have for any $\eta>0$,
	\begin{align*}
		\frac12 c(\eta)\, \sum_{j\in\N}2^{-j\eta}=
		\frac12\lesssim & \sum_{j\in\N}\frac{2^{-j}R}{\mu_Y(B_j(x))^{1/p}}\, \left(\int_{\lambda B_j(x)}g_u^p\, d\mu_Y\right)^{1/p}.
	\end{align*}
All the comparison constants implicitly referred to above depend solely on the doubling constant and the constants associated with the Poincar\'e inequality.
It follows that there is a positive integer $j_x$ such that
	\[
	\frac{c(\eta)}{2}\, 2^{-j_x\eta}
	\lesssim \frac{2^{-j_x}R}{\mu_Y(B_{j_x}(x))^{1/p}}\, \left(\int_{\lambda B_{j_x}(x)}g_u^p\, d\mu_Y\right)^{1/p},
	\]
	that is,
	\[
	2^{-j_x(\eta p-p+\theta)}\, \frac{\mu_Y(B_{j_x}(x))}{(2^{-j_x}R)^\theta}
	   \lesssim c(\eta)^{-p}\, R^{p-\theta}\, \int_{\lambda B_{j_x}(x)}g_u^p\, d\mu_Y.
	\]
	Choosing $\eta=1-\tfrac{\theta}{p}>0$ in the above analysis, we get
	\[
	\frac{\mu_Y(B_{j_x}(x))}{\rad(B_{j_x}(x))^\theta}\lesssim R^{p-\theta}\, \int_{\lambda B_{j_x}(x)}g_u^p\, d\mu_Y,
	\]
	where the comparison constant now also depends on $c(\eta)$ corresponding to the choice of $\eta$ made above, and so on $p$ and $\theta$.
	
	Let $E_0 := \{x \in E : x \text{ is a Lebesgue point of } u \}$.
	Then the collection $\lambda B_{j_x}(x)$, $x\in E_0$, is a cover of $E_0$. Thanks to the 5-covering lemma~\cite{Hei}, we obtain a countable pairwise disjoint subcollection $\{B_k\}_{k\in I\subset \N}$ such that $\{5B_k\}_{k\in I}$ is a cover of $E_0$. 
Here each $B_k$ is of the form $\lambda B_{j_x}(x)$ for some $x\in E_0$.

Note that $p>\theta$, and so by~\cite[Proposition~3.11]{GibaraKorteSh} and by \cite[Theorem~4.1]{KKST} for the case $p=1$ and \cite[Theorem 9.2.8]{HKST} for the case $p>1$, we know that $\mathcal{H}^{-\theta}$-a.e.~point is a Lebesgue point of $u$.
Hence the set of points $x\in E$ that are not Lebesgue points of $u$ forms 
a $\mathcal{H}^{-\theta}_R$-null set, and so it follows from Lemma~\ref{lem:Karadharai} and the doubling property of $\mu_Y$ that
\begin{align*}
M\, \frac{\mu_Y(B(x_0,R))}{R^\theta} \le \mathcal{H}^{-\theta}_R(E)\lesssim
\mathcal{H}^{-\theta}_{10\lambda R}(E)
&\le \sum_{k\in I}\frac{\mu_Y(5B_k)}{\rad(5 B_k)^\theta}\\
&\lesssim \sum_{k\in I}\frac{\mu_Y(\tfrac{1}{\lambda}\,B_k)}{\rad(\frac{1}{\lambda} B_k)^\theta}\\
&\lesssim R^{p-\theta}\, \sum_{k\in I}\int_{B_k}g_u^p\, d\mu_Y
\le R^{p-\theta}\, \int_{2 \lambda B_R} g_u^p\, d\mu_Y,
\end{align*}
that is,
\begin{equation*}
M\, \frac{\mu_Y(B(x_0,R))}{R^p}\lesssim \int_{2\lambda B_R}g_u^p\, d\mu_Y.
\end{equation*}
By taking the infimum over all such $u$, we conclude the proof of the proposition.
\end{proof}

The next lemma follows from~\cite[Proposition~3.11]{GibaraKorteSh} by using a similar proof as the proof of~\cite[Lemma~8.1]{GibaraKorteSh}.
The lemma provides a useful tool connecting $p$-capacity null subsets of $\partial\Om$ (and $\partial_\rho\Om$) with $\nu$-null subsets ($\nu_\rho$-null subsets respectively).

\begin{lemma}\label{lem:capacity2nu}
Let $p \geq 1$ and  $(Y,d_Y,\mu_Y)$ be a metric measure space with $\mu_Y$ a doubling measure supporting a $p$-Poincar\'e inequality.
Suppose also that $E$ is a Borel subset of $Y$ and that $\nu_E$ is a Borel measure supported on $E$, and that there exists $0 < \theta < p$ so that for each $x\in E$ and $0<r<2\diam(E)$ we have $\nu_E(B(x,r))\simeq r^{-\theta}\, \mu_Y(B(x,r))$. Then if $A\subset E$ such that $\cp_p(A)=0$, we must have $\nu_E(A)=0$.
\end{lemma}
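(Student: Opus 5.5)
The plan is to show that $\cp_p(A)=0$ forces $\mathcal H^{-\theta}(A)=0$ (or at least $\mathcal H^{-\theta}_R(A\cap B)=0$ for each ball), and then to use the codimensional comparison $\nu_E(B(x,r))\simeq r^{-\theta}\mu_Y(B(x,r))$ to transfer smallness of codimensional Hausdorff content to smallness of $\nu_E$. Since $A$ is a countable union of the pieces $A\cap B(x_0,k)$, it suffices to work inside a fixed ball. Capacity is outer, so I first choose functions $u_k\in N^{1,p}(Y)$ with $0\le u_k\le1$, $u_k=1$ on $A$ and $\|u_k\|_{N^{1,p}}^p<2^{-kp}$. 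Set $u=\sum_k u_k$; then $u\in N^{1,p}(Y)$ and $u=\infty$ on $A$. The task becomes showing that points where $u=\infty$, or more precisely where the maximal-type quantity blows up, form an $\mathcal H^{-\theta}$-null set.

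\textbf{Step 1: a capacitary weak-type estimate.} For a single $u\in N^{1,p}(Y)$ with $u\ge1$ on $A\subset B(x_0,R)$, I will mimic the proof of Proposition~\ref{prop:capest}. For $x\in A$ that is a Lebesgue point (handled via the cited fact from \cite[Proposition~3.11]{GibaraKorteSh}, which gives that $\mathcal H^{-\theta}$-a.e.\ point is a Lebesgue point of $u$), I telescope over balls $B(x,2^{-j}R)$ starting from a large ball where the average of $u$ is $\le 1/2$. Rather than insisting on a small starting average, I use the $L^p$ part: either $\vint_{B(x,R)}|u|\,d\mu_Y\ge1/2$, or the telescoping sum is $\ge1/2$. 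In both cases I choose $\eta=1-\theta/p$ and get a radius $r_x\le \lambda R$ with
\[
\frac{\mu_Y(B(x,r_x))}{r_x^\theta}\lesssim R^{p-\theta}\int_{B(x,\lambda r_x)}\big(g_u^p+|u|^p\big)\,d\mu_Y .
\]
Here $R$ is fixed, for example $R=1$, so that the $|u|^p$ term appears with a harmless factor of $R^{-p}$. The 5-covering lemma then gives $\mathcal H^{-\theta}_{10\lambda R}(A)\lesssim \|u\|^p_{N^{1,p}}$, with constants depending on $R$.

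\textbf{Step 2: passing to the limit and to $\nu_E$.} Applying Step~1 to $u_k$ gives $\mathcal H^{-\theta}_{10\lambda}(A\cap B)\lesssim 2^{-kp}\to0$, so this content vanishes. I then need content at every small scale $\delta$. I expect this to be the main obstacle: rerunning Step~1 at scale $\delta$ produces constants that blow up in $\delta$. I would first try Lemma~\ref{lem:Karadharai}. On bounded sets, content zero at one scale implies content zero at all smaller scales, because an arbitrarily cheap cover at scale $R$ can be refined at controlled cost $C(\tau)$. Alternatively, a cover with small sum automatically consists of small balls, since each ball with $\rad\ge\delta$ costs at least $c(\delta)>0$ by doubling. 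Either route yields $\mathcal H^{-\theta}(A)=0$.

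Finally, for a cover $\{B(x_i,r_i)\}$ of $A$ with small sum, where each ball meets $A\subset E$ and $r_i$ is small, I pick $y_i\in B(x_i,r_i)\cap E$. Then $\nu_E(B(x_i,r_i))\le\nu_E(B(y_i,2r_i))\lesssim \mu_Y(B(y_i,2r_i))/r_i^\theta\lesssim \mu_Y(B(x_i,r_i))/r_i^\theta$ by the codimensional hypothesis and doubling; this requires $2r_i<2\diam(E)$, which holds for small scales when $E$ is not a single point. Summing these bounds gives $\nu_E(A)\lesssim\mathcal H^{-\theta}(A)=0$.
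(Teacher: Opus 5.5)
Your proof is correct and has the same skeleton as the paper's. The paper's proof is essentially a citation: by \cite[Proposition~3.11]{GibaraKorteSh}, for $\theta<p$ sets of zero $p$-capacity have zero $\mathcal H^{-\theta}$-measure. The covering argument of \cite[Lemma~8.1]{GibaraKorteSh} then gives $\nu_E(A)\lesssim\mathcal H^{-\theta}(A)=0$, and that argument is exactly your last paragraph: recentre each ball at a point of $E$ and use the codimensional upper bound. Your caveat that this needs $\diam(E)>0$ is correct and necessary, since a Dirac mass at a point of zero capacity satisfies the hypotheses vacuously.

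Where you differ is in trying to re-derive the capacity-to-content step through the chaining and $5$-covering argument of Proposition~\ref{prop:capest}. The way you dispose of non-Lebesgue points undercuts this. The paper obtains ``$\mathcal H^{-\theta}$-a.e.\ point is a Lebesgue point of $u$'' by combining capacity-a.e.\ existence of Lebesgue points (\cite[Theorem~9.2.8]{HKST}, \cite[Theorem~4.1]{KKST}) with \cite[Proposition~3.11]{GibaraKorteSh}. That is precisely the implication $\cp_p(N)=0\Rightarrow\mathcal H^{-\theta}(N)=0$ that your Steps~1--2 are meant to prove. So your proof is valid only because it rests on the same citation as the paper, and Steps~1--2 then collapse to that citation.

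To make Step~1 self-contained, drop the Lebesgue-point appeal and use outer regularity of $\cp_p$, a consequence of quasicontinuity in the complete doubling Poincar\'e setting where the lemma is applied. This lets you choose each $u_k\ge 1$ on an open set containing $A$, which your ``capacity is outer'' remark gestures at but never uses. Then $(u_k)_{B(x,r)}\ge 1$ for all small $r$ at every $x\in A$, which is all your telescoping dichotomy needs. Your Step~1 bound then holds on all of $A\cap B(x_0,1)$ with no exceptional set, and Lemma~\ref{lem:Karadharai} upgrades vanishing content at one scale to $\mathcal H^{-\theta}(A)=0$, as you suggest.

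That version gives a direct quantitative estimate, roughly $\mathcal H^{-\theta}_{5\lambda}(A)\lesssim\cp_p(A)$, which is more than the lemma requires. The auxiliary function $u=\sum_k u_k$ is never used and can be deleted.
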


We now turn to the definition of one of the nonlocal function spaces of interest in this paper.

\begin{definition}\label{def:Besov}
Let $(Y,d_Y,\mu_Y)$ be a metric measure space.
For $p \geq 1$, $0 < \Theta < 1$ and $u$ a measurable function in $Y$, we consider the Besov energy
\begin{equation*}
\|u\|_{B^{\Theta}_{p,p}(Y, d_Y,\mu_Y)}^p
:=\int_Y \int_Y \frac{|u(y)-u(x)|^p}{\mu_Y(B(x,d_Y(x,y)))\, d_Y(x,y)^{p\Theta}}\, d\mu_Y(y)\, d\mu_Y(x).
\end{equation*}
We say that $u$ is in the \emph{homogeneous Besov space} $\HB^{\Theta}_{p,p}(Y,d_Y,\mu_Y)$, if $\|u\|_{B^{\Theta}_{p,p}(Y,d_Y,\mu_Y)}$ is finite.
We say that $u$ is in the \emph{Besov space} $B^{\Theta}_{p,p}(Y,d_Y,\mu_Y)$, if $\|u\|_{B^{\Theta}_{p,p}(Y,d_Y,\mu_Y)}$ is finite and $\int_Y |u|^p\, d\mu_Y$ is also finite.
We define the norm on $B^{\Theta}_{p,p}(Y,d_Y,\mu_Y)$ as $\| u \|_{B^{\Theta}_{p,p}(Y,d_Y,\mu_Y)} + \| u \|_{L^p(Y,\mu_Y)}$ whenever $u\in B^\Theta_{p,p}(Y,d_Y,\mu_Y)$.
\end{definition}

While the above notion does make sense even when $\Theta\ge 1$, unless $B^\Theta_{p,p}(Y)$ consists only of constant functions, it does not arise as the trace of a Newton-Sobolev class of functions on a metric measure space when $\Theta\ge 1$.
It is possible for $B^\Theta_{p,p}(Y)$ to have non-constant functions even for some $\Theta\ge 1$, as in the case of the Sierpinski carpet, see for instance~\cite{KajinoShimizu, KumagaiShShimizu}. In the present paper we are interested in Besov spaces that arise as traces of Newton-Sobolev spaces, and so here $0<\Theta<1$.

Recall that in this paper we have the following standing assumptions: the complete metric space $(X,d)$ has a doubling measure $\mu$ and supports a $p$-Poincar\'e inequality. We assume that $\Om \subset X$ is an unbounded uniform domain and $\overline\Om^d = X$.
We assume that the boundary $\partial \Om$ is unbounded, uniformly perfect and equipped with the measure $\nu$ that is $\theta$-codimensional with respect to $\mu$. Finally we assume that the metric density function $\rho$ is lower semicontinuous and satisfies conditions \ref{condA}, \ref{condB}, \ref{condC}, and \ref{condD}. Thus we have the following structural data:
$C_U$, $C_{\mu}$, $p$, $\lambda$, $C_P$, $\theta$, $C_{\theta}$, $\kappa$, $C_A$, $C_B$, $C_C$ and $C_D$.

\section{Conformal deformations of the metric and measure}
\label{sect-spher-def}

The focus of this section is to describe the construction of deformations of unbounded domains into bounded domains. Much of the construction here was first considered in~\cite{KRST}. 
There a parameter $\sigma$ was introduced in the deformation of the measure, but in the present paper we make the choice of $\sigma=p$. 

As pointed out at the end of the previous section, $(X, d,\mu)$ is a complete metric measure space and $\Omega$ is a uniform domain in this metric space with $\partial\Om$ unbounded. Thus $\Omega$ is rectifiably path-connected, that is, for each pair $x,y\in\Om$ there is a rectifiable curve $\gamma_{xy}$ in $\Om$ with end points $x$ and $y$.
While it is part of the structural assumptions in this paper that $\Om$ is a uniform domain and that $\mu$ is a doubling measure on $(\Om,d)$, with $(\Om,d,\mu)$ supporting a $p$-Poincar\'e inequality for some fixed $1\le p<\infty$, the definitions of $d_\rho$ and $\mu_\rho$ make sense even if $\Om$ is only rectifiably path-connected.
Recall that we assume that $X = \overline\Omega^d$.

We fix a base point $b\in\partial\Omega$ and for each $x\in \overline\Omega^d$ we set $|x| := d(x,b)$. We consider a lower semicontinuous metric density function $\rho : (0,\infty) \to (0,\infty)$.
The new metric $d_\rho$ on $\Om$ is defined for any $x , y \in \Omega$ by 
\begin{equation*}
d_{\rho}(x,y) := \inf_{\gamma \in \Gamma(x,y)} \int_{\gamma} \rho(|\cdot|) ds,
\end{equation*}
where $\Gamma(x,y)$ is the set of rectifiable curves in $\Omega$ with end points $x$ and $y$.
Note that $\overline\Omega^{d_{\rho}}$ is the collection of equivalence classes of Cauchy sequences in $(\Om,d_{\rho})$, with sequences converging to a point in $\Om$ identified with that point. Thus $\Om\subset \overline\Omega^{d_{\rho}}$, and so we define $\partial_{\rho} \Omega := \overline\Omega^{d_{\rho}} \setminus \Omega$. Given two $d_\rho$--Cauchy sequences $(x_i)_{i=1}^\infty$ and $(y_i)_{i=1}^\infty$ representing points $x,y\in \overline\Omega^{d_{\rho}}$, we set $d_\rho(x,y)=\lim_{i\to\infty}d_{\rho}(x_i,y_i)$. This number is then independent of the sequences representing the two points and this $d_\rho$ defines a metric on $\overline\Omega^{d_{\rho}}$. If both $x,y\in\Om$, then this notion agrees with $d_\rho$ on $\Om$.

The new measure on $\Omega$ is defined by 
\[
  d\mu_{\rho}(x) := \rho(|x|)^p d\mu(x).
\]
By assuming conditions \ref{condA}, \ref{condB} and \ref{condC} from the structural assumptions on $\rho$ listed in Section~\ref{section:intro}, we obtain from~\cite{KRST} that $(\Omega,d_{\rho})$ is a bounded metric space that is uniform and $\mu_{\rho}$ is a doubling measure in this space.
Additionally, we get that the space $(\Omega,d_{\rho},\mu_{\rho})$ supports a $p$-Poincar\'e inequality.
Also the completion of $\Omega$ with respect to the metric $d_{\rho}$ adds exactly one point to the completion of $\Omega$ with respect to $d$. This point is called the point at infinity and it is denoted $\infty$.
Finally we get that $g$ is a $p$-integrable upper gradient of $u$ in $(\Omega,d_{\rho},\mu_{\rho})$ if and only if $\rho(|\cdot|) g(\cdot)$ is a $p$-integrable upper gradient of $u$ in $(\Omega,d,\mu)$.

Recall from our standing assumptions that $\partial\Om$ is equipped with a Radon measure $\nu$ which is $\theta$-codimensional with respect to the measure $\mu$.
We now turn to transforming the measure $\nu$ on the boundary $\partial\Om$ to obtain a measure on $\partial_\rho\Om=\partial\Om\cup\{\infty\}$; such a transformation was not considered in~\cite{KRST}.
We define the new sphericalized measure on the boundary $\partial_{\rho} \Omega$ by setting 
\[
d\nu_{\rho}(x) := \rho(|x|)^{p-\theta} d\nu(x)
\] 
and $\nu_{\rho}(\{b\}) := \nu_{\rho}(\{\infty\}) := 0$. Notice that $\nu(\{b\})=0$, because $\nu$ is doubling in $(\partial \Omega ,d)$ and $(\partial \Omega ,d)$ is uniformly perfect, see~\cite[Lemma 2.2]{bjornbjornkorterogovintakala} and \cite[Theorem~1]{MaciasSegovia}.

The doubling property of $\nu$ and Condition \ref{condA} give us that
\begin{equation}
\label{condDreverse}
\int_{\partial \Omega \setminus B(b,r)} \rho(|x|)^{p-\theta} d\nu(x)
\simge \rho(r)^{p-\theta} \nu(B(b,r+1)),
\end{equation}
where the comparison constant depends on $C_{\mu}$, $p$, $\theta$, $C_{\theta}$, $\kappa$ and $C_A$.
Similarly the doubling property of $\mu$ and Condition \ref{condA} give us that
\begin{equation}
\label{condCreverse}
\int_{\Omega \setminus B(b,r)} \rho(|x|)^p d\mu(x)
\simge \rho(r)^p \mu(B(b,r+1)),
\end{equation}
where the comparison constant depends on $C_{\mu}$, $p$, and $C_A$.

\begin{lemma}
Condition~\ref{condD} is equivalent to the condition that there is a constant $C_D^\prime$ with
\begin{equation*} 
\int_{\Omega \setminus B(b,r)} \rho(|x|)^{p-\theta} (|x|+1)^{-\theta} d\mu(x)
\le 
C_D^\prime \, \rho(r)^{p-\theta} (r+1)^{-\theta} \mu(B(b,r+1))
\end{equation*}
for every $r>0$.
\end{lemma}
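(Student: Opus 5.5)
We plan to compare both integrals with dyadic annular sums and use codimensionality to convert $\nu$-masses of annuli into $\mu$-masses. As noted in the introduction, by condition~\ref{condA} and doubling it suffices to treat $r\ge 1$. For $r\ge1$, condition~\ref{condA} also gives $\rho(t)\simeq\rho(s)$ whenever $t,s\in[2^k r,2^{k+1}r]$. For $r \geq 1$ and $k\ge0$ set $A_k:=B(b,2^{k+1}r)\setminus B(b,2^kr)$. On $A_k$ we have $|x|+1\simeq 2^kr$ and $\rho(|x|)\simeq\rho(2^kr)$. Hence the left-hand side of condition~\ref{condD} is comparable to $\sum_k\rho(2^kr)^{p-\theta}\nu(A_k\cap\partial\Om)$, while the left-hand side of the new condition is comparable to $\sum_k\rho(2^kr)^{p-\theta}(2^kr)^{-\theta}\mu(A_k)$.

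The key step is to show, for each $k$,
\[
\nu(A_k\cap\partial\Om)\simeq (2^kr)^{-\theta}\mu(B(b,2^{k+1}r)),
\qquad
\mu(A_k)\simeq \mu(B(b,2^{k+1}r)),
\]
possibly after replacing $A_k$ by a slightly fattened annulus $B(b,2^{k+m}r)\setminus B(b,2^{k}r)$ with $m$ fixed. The upper bounds follow directly from codimensionality~\eqref{eq:codim-def} at $x=b$, which is available for every radius since $\partial\Om$ is unbounded, together with doubling. For the lower bounds we use uniform perfectness of $\partial\Om$ at $b$ with constant $\kappa$. It provides a point $z\in\partial\Om$ with $2^kr\le |z|<\kappa2^kr$. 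A ball $B(z,c2^kr)$, with $c$ small depending on $\kappa$, then lies in the fattened annulus. Codimensionality at $z$ together with doubling gives $\nu(B(z,c2^kr))\simeq (2^kr)^{-\theta}\mu(B(b,2^kr))$. For $\mu$ the same ball works; alternatively, uniformity gives a corkscrew ball in $\Om$ near $z$.

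With these comparisons the two annular sums are term-by-term comparable, because each fattened annulus is counted only a bounded number ($m$) of times. Thus the left-hand sides of condition~\ref{condD} and of the new condition are comparable. The right-hand sides are also comparable, since $\nu(B(b,r+1))\simeq (r+1)^{-\theta}\mu(B(b,r+1))$ by codimensionality. This proves the equivalence, with $C_D'$ depending on $C_D$ and the structural data, and conversely.

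The main obstacle is the lower bound on the $\nu$-mass of annuli, which needs uniform perfectness. The other point requiring care is the double counting introduced by the fattened annuli, which is handled by the bounded overlap noted above. The reduction to $r\ge1$ also needs a brief check for the new condition: for $r<1$, both sides are comparable to their values at $r=1$ by condition~\ref{condA} and doubling, with the bounded region $B(b,1)$ contributing a comparable amount.
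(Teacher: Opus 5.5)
Your proposal is correct and takes essentially the same route as the paper. For $r\ge 1$ both integrals are compared through an annular decomposition: the paper uses $(2\kappa)$-adic annuli where you use fattened dyadic ones with bounded overlap. Uniform perfectness of $\partial\Omega$ gives the lower bound on the $\nu$-mass of annuli, codimensionality converts between $\nu$ and $\mu$, and $0<r<1$ is handled by comparison with $r=1$.

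One small correction: as written, a point $z$ with $2^kr\le |z|<\kappa 2^kr$ may have $|z|$ arbitrarily close to $2^kr$, and then no ball $B(z,c2^kr)$ fits inside $B(b,2^{k+m}r)\setminus B(b,2^kr)$. Apply uniform perfectness at radius $2^{k+1}r$ instead, so that $|z|\ge 2^{k+1}r$; then $B(z,2^kr)$ fits as soon as $2^m\ge 2\kappa+1$.
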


\begin{proof}
The comparability of the right-hand sides of the above condition and of Condition~\ref{condD} follows immediately from the codimension property.
To prove the comparability of the left-hand sides of the above condition and of Condition~\ref{condD}, assume first that $r \geq 1$.
Then we have 
\[
\nu \left( B \left( b,(2 \kappa)^j r \right) \setminus B \left( b,(2 \kappa)^{j-1} r \right) \right) \simge \nu \left( B \left( b,(2 \kappa)^j r \right) \right)
\] 
for every positive integer $j$, where $\kappa$ is the uniform perfectness constant associated with $(\partial\Om,d)$.
This follows from the uniform perfectness of $(\partial \Omega ,d)$ and the doubling property of $\nu$.
Then by condition \ref{condA},
\begin{align*}
\int_{\partial \Omega \setminus B(b,r)} \rho(|x|)^{p-\theta} d\nu(x)
&= \sum_{j=1}^{\infty} \int_{B \left( b,(2 \kappa)^j r \right) \setminus B \left( b,(2 \kappa)^{j-1} r \right)} \rho(|x|)^{p-\theta} d\nu(x)
\\
&\simeq \sum_{j=1}^{\infty} \nu \left( B \left( b,(2 \kappa)^j r \right) \setminus B \left( b,(2 \kappa)^{j-1} r \right) \right) \rho \left( (2 \kappa)^j r \right)^{p-\theta}
\\
&\simeq \sum_{j=1}^{\infty} \nu \left( B \left( b,(2 \kappa)^j r \right) \right) \rho \left( (2 \kappa)^j r \right)^{p-\theta}.
\end{align*}
Similarly we have
\begin{align*}
\int_{\Omega \setminus B(b,r)} \rho(|x|)^{p-\theta} (|x|+1)^{-\theta} d\mu(x)
&\simeq \sum_{j=1}^{\infty} \int_{B \left( b,(2 \kappa)^j r \right) \setminus B \left( b,(2 \kappa)^{j-1} r \right)} \rho(|x|)^{p-\theta} |x|^{-\theta} d\mu(x)
\\
\simeq &\sum_{j=1}^{\infty} \mu \left( B \left( b,(2 \kappa)^j r \right) \setminus B \left( b,(2 \kappa)^{j-1} r \right) \right) \rho \left( (2 \kappa)^j r \right)^{p-\theta} \left( (2 \kappa)^j r \right)^{-\theta}
\\
\simeq &\sum_{j=1}^{\infty} \mu \left( B \left( b,(2 \kappa)^j r \right) \right) \rho \left( (2 \kappa)^j r \right)^{p-\theta} \left( (2 \kappa)^j r \right)^{-\theta}.
\end{align*}
Note that $\Omega$ is connected and $\mu$ is doubling in $\Omega$.
Finally the comparability follows from the codimensionality.
On the other hand if $0 < r < 1$, then
\begin{align*} 
\int_{B(b,1) \setminus B(b,r)} \rho(|x|)^{p-\theta} d\nu(x)
&\simle \rho(1)^{p-\theta} \nu(B(b,1))
\simeq \rho(1)^{p-\theta} \nu(B(b,2\kappa) \setminus B(b,1))
\\
&\simeq \int_{B(b,2\kappa) \setminus B(b,1)} \rho(|x|)^{p-\theta} d \nu(x)
\leq \int_{\partial \Omega \setminus B(b,1)} \rho(|x|)^{p-\theta} d \nu(x),
\end{align*}
where we used condition \ref{condA}, the doubling property of $\nu$ and the uniform perfectness of $(\partial\Omega,d)$.
Similarly
\begin{align*} 
\int_{B(b,1) \setminus B(b,r)} \rho(|x|)^{p-\theta} (|x|+1)^{-\theta} d\mu(x)
&\simle \rho(1)^{p-\theta} \mu(B(b,1))
\simeq \rho(1)^{p-\theta} \mu(B(b,2) \setminus B(b,1))
\\
&\simeq \int_{B(b,2) \setminus B(b,1)} \rho(|x|)^{p-\theta} (1+|x|)^{-\theta} d \mu(x)
\\
&\leq \int_{\Omega \setminus B(b,1)} \rho(|x|)^{p-\theta} (1+|x|)^{-\theta} d \mu(x).
\end{align*}
Now, applying the comparability obtained from the case $r\ge 1$ considered earlier with the choice of $r=1$, together with the two previous inequalities, we get
\begin{align*} 
\int_{\partial \Omega \setminus B(b,r)} \rho(|x|)^{p-\theta} d\nu(x)
&= \int_{\partial \Omega \setminus B(b,1)} \rho(|x|)^{p-\theta} d\nu(x) + \int_{B(b,1) \setminus B(b,r)} \rho(|x|)^{p-\theta} d\nu(x)
\\
&\simeq \int_{\partial \Omega \setminus B(b,1)} \rho(|x|)^{p-\theta} d\nu(x)
\simeq \int_{\Omega \setminus B(b,1)} \rho(|x|)^{p-\theta} (|x|+1)^{-\theta} d\mu(x)
\\
&\simeq \int_{\Omega \setminus B(b,r)} \rho(|x|)^{p-\theta} (|x|+1)^{-\theta} d\mu(x).\qedhere
\end{align*}
\end{proof}

\section{The codimensionality of \texorpdfstring{$\mu_{\rho}$}{} and \texorpdfstring{$\nu_{\rho}$}{}}
\label{sect-codim}

We start this section by proving that $\nu_{\rho}$ is a doubling measure on $(\partial_{\rho}\Omega,d_{\rho})$. 
Then by using the doubling properties of $\mu_{\rho}$ and $\nu_{\rho}$ we prove that these measures satisfy the codimensional relationship.
To prove that $\nu_{\rho}$ is doubling in $(\partial_{\rho} \Omega,d_{\rho})$ we use the results of \cite{KRST}.
Balls in the metric $d_{\rho}$ are denoted by $B_{\rho}$, meaning that
\begin{equation*}
B_{\rho}(x,r)
:= \left\{ y \in \overline\Omega^{d_{\rho}} : d_{\rho}(x,y) < r \right\}
\end{equation*}
for every $x \in \overline\Omega^{d_{\rho}}$ and $r > 0$.

The following two lemmas have analogs in~\cite{KRST}, where the points considered were assumed to be in $\Om$.
However, thanks to Condition~\ref{condA}, we can conclude the corresponding results for points in the boundary $\partial \Om$ as well via continuity of $d$ and $d_\rho$. 
We omit their proofs but include reference to the corresponding results in~\cite{KRST}.
Note that we do not let $x$ equal $b$ simply because $\rho(0)$ is not defined. 
However, $d_\rho(b,y)$ is well-defined as the extension of $d_\rho$ from $\Om$ to its metric completion $\Om\cup\partial\Om\cup\{\infty\}$ with $b\in\partial\Om$.

\begin{lemma}\label{lemma4.1}
Whenever $x,y \in \overline{\Omega}^d$ satisfy $x \neq b$ and $\frac{1}{2}(|x|+1) \leq |y|+1 \leq 2(|x| + 1)$, then
\begin{equation*}
d_{\rho}(x,y)
\simeq \rho(|x|) d(x,y),
\end{equation*}
where the comparison constants depend only on $C_U$, $C_A$ and $C_B$.
\end{lemma}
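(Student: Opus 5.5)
The plan is to first prove the estimate for $x,y\in\Omega$ and then pass to boundary points by continuity. If $x$ or $y$ lies in $\partial\Omega$ (with $x\neq b$), approximate by sequences $x_i\to x$, $y_i\to y$ in $\Omega$ in the metric $d$. The comparison hypothesis $\tfrac12(|x|+1)\le |y|+1\le 2(|x|+1)$ may fail slightly for the approximants, but it then holds with constants $\tfrac13$ and $3$. Condition~\ref{condA} absorbs this change, since it permits comparison of $\rho(r)$ and $\rho(s)$ whenever $r\le 2s+1$ and $s\le 2r+1$. It also gives $\rho(|x_i|)\simeq\rho(|x|)$.

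We also need $d_\rho(x_i,x)\to 0$. This follows because near $x$ the density $\rho(|\cdot|)$ is bounded above by a multiple of $\rho(|x|)$, again by condition~\ref{condA}. Uniformity then gives curves from $x_i$ to $x_j$ of length at most $C_U d(x_i,x_j)$, so $d$-Cauchy sequences are $d_\rho$-Cauchy. The limits $d_\rho(x_i,y_i)\to d_\rho(x,y)$ and $d(x_i,y_i)\to d(x,y)$ then transfer the inequality.

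For $x,y\in\Omega$, the upper bound comes from a uniform curve $\gamma$ joining $x$ and $y$ with $\ell(\gamma)\le C_U d(x,y)\le C_U(|x|+|y|)$. I would split into two cases.
\begin{itemize}
\item If $d(x,y)\le \tfrac{1}{2C_U}(|x|+1)$, every point $z\in\gamma$ satisfies $|z|+1\simeq |x|+1$, in the range required by~\ref{condA}. Hence $\int_\gamma\rho\,ds\lesssim \rho(|x|)\,d(x,y)$.
\item If $d(x,y)$ is comparable to $|x|+1$, I would instead route through large radii. Concatenate the uniform curve with radial-type pieces and bound the integral of $\rho$ over radii between about $|x|$ and $C(|x|+1)$ by condition~\ref{condA}. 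Alternatively, use condition~\ref{condB}, which gives $\int_r^\infty\rho\lesssim (r+1)\rho(r)\simeq \rho(|x|)\,d(x,y)$.
\end{itemize}
In the second case the delicate point is points $z$ on $\gamma$ with small $|z|$, where $\rho$ could be large. This cannot happen when $|x|,|y|$ are large, because then $\gamma$ stays away from $b$ (its points have $|z|\gtrsim |x|$ minus a controlled amount). When $|x|\lesssim 1$, the whole picture lies in $B(b,C)$, on which $\rho$ is comparable to $\rho(1)$ by~\ref{condA}.

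For the lower bound, take any rectifiable $\gamma\in\Gamma(x,y)$. If $\gamma$ stays in the region $\{z:\tfrac14(|x|+1)\le |z|+1\le 4(|x|+1)\}$, then $\rho(|z|)\gtrsim\rho(|x|)$ there, so $\int_\gamma\rho\,ds\gtrsim \rho(|x|)\ell(\gamma)\ge\rho(|x|)d(x,y)$. Otherwise $\gamma$ exits that region, and it must traverse a radial distance of order $|x|+1$ inside the region where~\ref{condA} applies. That segment alone contributes $\gtrsim \rho(|x|)(|x|+1)\gtrsim \rho(|x|)d(x,y)$, since $d(x,y)\le |x|+|y|\lesssim |x|+1$.

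The main obstacle is the upper bound in the far-apart regime, specifically avoiding the region near $b$ where $\rho$ may blow up. This is exactly where the analogous result in \cite{KRST} (stated there for points of $\Omega$) does the work, and I would cite that result directly for $x,y\in\Omega$. The genuinely new part is the continuity extension to $\overline\Omega^d$ described above.
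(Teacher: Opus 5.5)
Your proposal follows the paper's route. The paper likewise takes the interior case from \cite[Lemma~3.11]{KRST} and passes to points of $\partial\Omega$ by approximating in $d$, using condition~\ref{condA} and the continuity of $d$ and $d_\rho$ on the completions. The one step you pass over too quickly is that the approximants $x_i,y_i$ may satisfy the hypothesis only with factors $\tfrac13$ and $3$. Condition~\ref{condA} does not by itself repair this for the cited interior lemma; instead, in the extra range one has $|y_i|\ge 2|x_i|+1$ or $|x_i|\ge 2|y_i|+1$, so Lemma~\ref{lemma4.2} (in its interior form from \cite{KRST}), together with $d(x_i,y_i)\simeq |x_i|+1\simeq |y_i|+1$ and condition~\ref{condA}, yields the same two-sided estimate.
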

We can infer the above lemma from~\cite[Lemma~3.11]{KRST} by using the doubling property of $\rho$ and approximating points in $\partial\Om$ by points in $\Om$.

\begin{lemma}\label{lemma4.2}
Let $x,y \in \overline{\Omega}^d$ such that $x \neq b$ and $|y| \geq 2 |x| + 1$. Then
\begin{equation*}
d_{\rho}(x,y)
\simeq \rho(|x|) (|x|+1),
\end{equation*}
where the comparison constants depend only on $C_U$, $C_A$ and $C_B$.
In particular there exist constants $C_1$ and $C_2$ depending on $C_U$, $C_A$ and $C_B$ such that
\begin{equation*}
C_1 \rho(|x|) (|x|+1)
\leq d_{\rho}(x,\infty)
\leq C_2 \rho(|x|) (|x|+1).
\end{equation*}
\end{lemma}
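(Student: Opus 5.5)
To prove Lemma~\ref{lemma4.2}, we first take $x,y\in\Om$ and then pass to boundary points by approximation. This is legitimate because $d$ and $d_\rho$ extend continuously to the completions, and Condition~\ref{condA} ensures that $\rho(|x_i|)(|x_i|+1)\simeq\rho(|x|)(|x|+1)$ along any approximating sequence $x_i\to x$ with $x\ne b$. We prove the upper and lower bounds separately.

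\textbf{Lower bound.} Let $\gamma$ be any rectifiable curve in $\Om$ from $x$ to $y$. Since $|y|\ge 2|x|+1$, the function $|\cdot|$ runs along $\gamma$ from $|x|$ up to at least $2|x|+1$, and it is $1$-Lipschitz. Hence $\gamma$ contains a subcurve on which $|\cdot|$ stays in $[|x|,2|x|+1]$ and which has length at least $|x|+1$. On this range, Condition~\ref{condA} (with $r=|z|$, $s=|x|$) gives $\rho(|z|)\ge C_A^{-1}\rho(|x|)$. Therefore $\int_\gamma\rho\,ds\ge C_A^{-1}\rho(|x|)(|x|+1)$, and taking the infimum over $\gamma$ yields $d_\rho(x,y)\gtrsim\rho(|x|)(|x|+1)$. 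This step does not even use uniformity.

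\textbf{Upper bound.} This is the main obstacle: we need a curve from $x$ to $y$ whose $\rho$-length is controlled, and along which $|\cdot|$ grows at least linearly with arclength, so that Condition~\ref{condB} can be applied. For this we use the uniform-domain structure, as in~\cite[Lemma~3.11]{KRST}. Because $\Om$ is unbounded and uniform, from any point $z$ there is a ``quasi-ray'' $\gamma_z$ to $\infty$ satisfying $|\gamma_z(t)|\gtrsim t-C|z|$ and $\dist(\gamma_z(t),\partial\Om)\gtrsim t$. Such rays can be built by concatenating uniform curves from $z$ to points $w_k$ with $|w_k|=2^k(|z|+1)$.

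Integrating $\rho$ along $\gamma_x$ and splitting at $t\simeq |x|+1$:
\begin{itemize}
\item On the initial portion, where $|\cdot|\lesssim |x|+1$, Condition~\ref{condA} gives $\rho\simeq\rho(|x|)$, so this portion contributes $\lesssim\rho(|x|)(|x|+1)$.
\item On the tail, where $|\gamma_x(t)|\simeq t$, Condition~\ref{condA} (applied dyadically) together with Condition~\ref{condB} gives $\int\rho(|\gamma_x(t)|)\,dt\lesssim\int_{|x|}^\infty\rho\lesssim(|x|+1)\rho(|x|)$.
\end{itemize}
This shows $d_\rho(x,\infty)\lesssim\rho(|x|)(|x|+1)$.

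The same estimate applied to $y$ gives $d_\rho(y,\infty)\lesssim\rho(|y|)(|y|+1)$. We then show $\rho(|y|)(|y|+1)\lesssim\rho(|x|)(|x|+1)$ when $|y|\ge|x|$. By Condition~\ref{condA}, $\rho(|y|)(|y|+1)\lesssim\int_{|y|}^{2|y|+1}\rho\le\int_{|x|}^\infty\rho$, and Condition~\ref{condB} bounds the last integral by $C_B(|x|+1)\rho(|x|)$. The triangle inequality through $\infty$ then gives $d_\rho(x,y)\lesssim\rho(|x|)(|x|+1)$.

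\textbf{The statement about $d_\rho(x,\infty)$.} The upper bound was just established. For the lower bound, choose $y_i\to\infty$ in $d_\rho$; then $|y_i|\to\infty$, since by Lemma~\ref{lemma4.1} and Condition~\ref{condA} bounded $d$-sets stay $d_\rho$-away from $\infty$. The first claim gives $d_\rho(x,y_i)\gtrsim\rho(|x|)(|x|+1)$, and letting $i\to\infty$ yields $C_1$.

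All constants depend only on $C_U$, $C_A$ and $C_B$.
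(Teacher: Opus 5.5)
Your overall plan matches what the paper indicates. The paper omits the proof, deferring to \cite[Lemma~3.12]{KRST} for points of $\Om$, and extends to $\overline{\Om}^d$ by continuity of $d$ and $d_\rho$ together with Condition~\ref{condA}. Your lower bound is correct: any curve from $x$ to $y$ must cross $\{|x|\le|\cdot|\le 2|x|+1\}$, which costs length $|x|+1$ at density $\gtrsim\rho(|x|)$. Reducing the upper bound to $d_\rho(x,\infty)+d_\rho(y,\infty)$, via $(|y|+1)\rho(|y|)\lesssim\int_{|x|}^\infty\rho\le C_B(|x|+1)\rho(|x|)$, is also correct. Routing the upper bound through $\infty$, rather than estimating the $\rho$-length of one uniform curve from $x$ to $y$, is a legitimate choice, and it produces $C_2$ directly.

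\textbf{The gap.} The step that does not follow as written is the estimate on the initial portion of $\gamma_x$. There you only know $|\gamma_x(t)|\lesssim |x|+1$, and Condition~\ref{condA} then does not give $\rho(|\gamma_x(t)|)\lesssim\rho(|x|)$. If the curve came close to $b$ while $|x|$ is large, $\rho$ there would be comparable to $\rho(1)$, which can be much larger than $\rho(|x|)$ (e.g.\ for $\rho(t)=\min\{1,t^{-\beta}\}$). The properties you record for $\gamma_z$, namely $|\gamma_z(t)|\gtrsim t-C|z|$ and $\dist(\gamma_z(t),\partial\Om)\gtrsim t$, say nothing for small $t$.

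What you need is $|\gamma_x(t)|\gtrsim |x|$ along the whole ray, and your concatenated construction does provide it. Since $b\in\partial\Om$, every point $p$ of a uniform curve joining $a$ to $c$ satisfies $\min\{\ell(\gamma_{[a,p]}),\ell(\gamma_{[p,c]})\}\le C_U\dist(p,\partial\Om)\le C_U|p|$. Hence $|p|\ge \min\{|a|,|c|\}/(1+C_U)$.
\begin{enumerate}
\item On the first piece (from $x$ to $w_1$) this gives $|x|/(1+C_U)\le |p|\le (1+3C_U)(|x|+1)$. A number of applications of Condition~\ref{condA} depending only on $C_U$ then yields $\rho(|p|)\simeq\rho(|x|)$.
\item On the $k$-th piece, $k\ge 2$, it gives $|p|\simeq 2^k(|x|+1)\simeq t$, which is exactly the tail property you use.
\end{enumerate}
The distance-to-boundary claim is not needed. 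It is also not automatic at the junction points $w_k$ unless those are chosen with care.

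\textbf{Two minor remarks.}
\begin{enumerate}
\item If $|y|=2|x|+1$, interior approximants need not satisfy $|y_i|\ge 2|x_i|+1$. Your crossing argument is robust, though: it still gives $\rho$-length at least $C_A^{-1}\rho(|x_i|)\bigl(\min\{|y_i|,2|x_i|+1\}-|x_i|\bigr)$, so the passage to the limit is fine.
\item For the lower bound on $d_\rho(x,\infty)$, the appeal to Lemma~\ref{lemma4.1} is unnecessary. Simply take $y_i$ on your ray $\gamma_x$, which satisfies $|y_i|\to\infty$ and $y_i\to\infty$ in $d_\rho$.
\end{enumerate}
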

The proof is very similar to the proof of~\cite[Lemma 3.12]{KRST}, and is therefore omitted here.

Let us define $h\colon (0,\infty)\to (0,\infty)$ by setting $h(t):=(t+1)\rho(t)$ the same way as in \cite{KRST}. The inverse of $h$ is defined by
\[
h^{-1}(\tau):=\inf \{t > 0: h(t)\le \tau\},\quad \tau>0.
\]
Notice that $h^{-1}$ is monotone decreasing and unbounded (since $h(t)\to0$ as $t\to\infty$, see~\cite[Remark~2.1]{KRST}).
Note that when $0 < \tau \leq \rho(1)/C_A$, we know that $h^{-1}(\tau)\ge 1>0$.
From \cite[Lemma 4.2]{KRST} we get that $h^{-1}(\tau) \simeq h^{-1}(\tau/2)$ for every $0 < \tau \leq \rho(1) / C_A$ with comparison constants depending only on $C_A$ and $C_B$.
Then, with $C_1$ and $C_2$ as in Lemma~\ref{lemma4.2}, we get that for every $0 < r \leq C_1  \rho(1) / C_A$,
\begin{equation}
\label{balls-at-infinity}
\overline\Omega^{d_{\rho}} \setminus B \left( b,2h^{-1} \left( \frac{r}{2 C_2 C_A C_B} \right) \right)
\subset B_\rho(\infty,r)
\subset \overline\Omega^{d_{\rho}} \setminus B \left( b,h^{-1} \left( \frac{r}{C_1} \right) \right) .
\end{equation}
The proof of \eqref{balls-at-infinity} is similar to the proof of \cite[Lemma 4.4]{KRST} so it is omitted.

\begin{lemma}
\label{small-balls}
There exist positive constants $c_0$, $a_1$ and $a_2$ depending only on $C_U$, $C_A$ and $C_B$ such that if $x \in \overline\Omega^{d} \setminus \{b\}$ and $0 < r \leq 2c_0 d_{\rho}(x,\infty)$, then
\begin{equation*}
B \left( x,\frac{a_1 r}{\rho(|x|)} \right) \setminus \{b\}
\subset B_{\rho}(x,r) \setminus \{b\}
\subset B \left( x,\frac{a_2 r}{\rho(|x|)} \right).
\end{equation*}
Also $\frac{1}{C_A}\rho(|x|) \leq \rho(|y|) \leq C_A \rho(|x|)$ for every $y \in B_{\rho}(x,r) \setminus \{b\}$.
\end{lemma}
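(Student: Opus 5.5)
The plan is to deduce both inclusions from Lemmas~\ref{lemma4.1} and~\ref{lemma4.2}, with $c_0$ chosen small enough that every $y$ with $d_\rho(x,y)<r$ stays in the comparable annulus $\frac12(|x|+1)\le |y|+1\le 2(|x|+1)$. There Lemma~\ref{lemma4.1} gives $d_\rho(x,y)\simeq \rho(|x|)d(x,y)$, and condition~\ref{condA} gives $\rho(|y|)\simeq\rho(|x|)$ (the condition in \ref{condA} is exactly of this form, since $|y|\le 2|x|+1$ and $|x|\le 2|y|+1$).

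\textbf{Step 1: localize the $d_\rho$-ball.} Let $y\in B_\rho(x,r)\setminus\{b\}$ with $r\le 2c_0 d_\rho(x,\infty)\le 2c_0C_2\rho(|x|)(|x|+1)$.
\begin{itemize}
\item If $|y|\ge 2|x|+1$, Lemma~\ref{lemma4.2} gives $d_\rho(x,y)\ge c\,\rho(|x|)(|x|+1)$. This contradicts $d_\rho(x,y)<r$ once $2c_0C_2<c$.
\item If $|x|\ge 2|y|+1$, apply Lemma~\ref{lemma4.2} with the roles swapped (valid when $y\ne b$). This gives $d_\rho(x,y)\ge c\,\rho(|y|)(|y|+1)$, which is not immediately comparable to $\rho(|x|)(|x|+1)$, since $h$ need not be monotone.
\end{itemize}
For the second case I would instead argue via a curve. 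A near-optimal curve from $x$ to $y$ must cross the annulus $\{(|x|-1)/2\le|z|\le |x|\}$, so it has length at least $\sim(|x|+1)/2$ in a region where $\rho\simeq\rho(|x|)$ by~\ref{condA}. Hence its $\rho$-length is $\gtrsim \rho(|x|)(|x|+1)$, at least when $|x|\ge1$. If $|x|<1$ the condition $|x|\ge 2|y|+1$ is impossible. Points $y=b$ are excluded, and boundary points are handled by approximation from $\Om$, as the paper does for the preceding lemmas.

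With both cases ruled out, every such $y$ lies in the comparable annulus. Lemma~\ref{lemma4.1} then gives $\rho(|x|)d(x,y)\le C d_\rho(x,y)<Cr$, so $y\in B(x,a_2r/\rho(|x|))$. The final statement $\rho(|y|)\simeq\rho(|x|)$ follows from~\ref{condA}.

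\textbf{Step 2: reverse inclusion.} Take $y\ne b$ with $d(x,y)<a_1r/\rho(|x|)\le 2a_1c_0C_2(|x|+1)$. For $a_1$ small this forces $\big||y|-|x|\big|\le\frac12(|x|+1)$, so $y$ is in the comparable annulus. Lemma~\ref{lemma4.1} then gives $d_\rho(x,y)\le C\rho(|x|)d(x,y)<Ca_1r\le r$ if $a_1\le 1/C$.

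\textbf{Main obstacle.} The delicate point is the second case of Step~1, excluding points with $|y|$ much smaller than $|x|$. My first attempt would be the curve-crossing estimate above, using the uniformity constant only through Lemma~\ref{lemma4.1}. All constants then depend only on $C_U,C_A,C_B$, as claimed.
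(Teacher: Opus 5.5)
Your proposal is correct and follows the route of the omitted proof (the paper defers to \cite[Lemma~4.9]{KRST}). Lemma~\ref{lemma4.2} confines $B_\rho(x,r)\setminus\{b\}$ to the annulus $\tfrac12(|x|+1)<|y|+1<2(|x|+1)$; it is worth stating explicitly that $c_0<1/2$ keeps $\infty$ out of $B_\rho(x,r)$. Lemma~\ref{lemma4.1} and condition~\ref{condA} then give both inclusions and the comparison $\rho(|y|)\simeq\rho(|x|)$ with constant $C_A$.

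Your curve-crossing argument for the case $|x|\ge 2|y|+1$ is valid, but the detour is unnecessary. Conditions~\ref{condA} and~\ref{condB} make $h$ quasi-decreasing, namely $h(t)\le C_AC_B\,h(s)$ for $t\ge s$. So Lemma~\ref{lemma4.2} with the roles of $x$ and $y$ swapped already gives $d_\rho(x,y)\gtrsim h(|y|)\gtrsim h(|x|)$. This is the same combination the paper uses in Case~(3) of the proof of Theorem~\ref{thm:nu-rho-doubling}.
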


The proof is the same as the proof of~\cite[Lemma 4.9]{KRST}, and so it is omitted.

\begin{lemma}
\label{doubling-at-infinity}
For all $r>0$ we have
$$
0<\nu_{\rho}(B_\rho(\infty,2r))
\simle \nu_{\rho}(B_\rho(\infty,r))<\infty,
$$
with the comparison constant depending on $C_U$, $C_{\mu}$, $p$, $\theta$, $C_{\theta}$, $\kappa$, $C_A$, $C_B$ and $C_D$.
\end{lemma}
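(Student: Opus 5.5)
We plan to use the ball description \eqref{balls-at-infinity} to trade $\rho$-balls centred at $\infty$ for complements of $d$-balls centred at $b$, and then to evaluate $\nu_\rho$ on such complements with Condition~\ref{condD} and its reverse form \eqref{condDreverse}. The argument splits according to whether $r$ is small or large.

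\emph{Large radii.} Since $(\Om,d_\rho)$ is bounded, $\diam_\rho(\overline\Om^{d_\rho})<\infty$. Fix the threshold $r_0:=C_1\rho(1)/(2C_A)$. For $r\ge r_0$ both balls contain $B_\rho(\infty,r_0)$ and are contained in the whole space, so it suffices to show two things: $\nu_\rho(\partial_\rho\Om)<\infty$, and $\nu_\rho(B_\rho(\infty,r_0))\simge\nu_\rho(\partial_\rho\Om)$.

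\emph{Finiteness of the total mass.} Condition~\ref{condD} with $r=1$ gives
\[
\nu_\rho(\partial\Om\setminus B(b,1))\le C_D\rho(1)^{p-\theta}\nu(B(b,2)).
\]
On $B(b,1)$, Condition~\ref{condA} gives $\rho(|x|)\simeq\rho(1)$, so $\nu_\rho(B(b,1))\simeq\rho(1)^{p-\theta}\nu(B(b,1))$. Hence
\[
\nu_\rho(\partial_\rho\Om)\simeq\rho(1)^{p-\theta}\nu(B(b,2)),
\]
using doubling of $\nu$.

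\emph{Lower bound on the ball.} By \eqref{balls-at-infinity}, $B_\rho(\infty,r_0)\supset\partial\Om\setminus B(b,R_0)$ with $R_0=2h^{-1}(r_0/(2C_2C_AC_B))$. Here $R_0$ is a constant depending only on $\rho$ and the data. Indeed $h$ is comparable to a monotone function by \ref{condA}, and the scaling estimate $h^{-1}(\tau)\simeq h^{-1}(\tau/2)$ gives $R_0\simeq 1$ quantitatively. By \eqref{condDreverse},
\[
\nu_\rho(\partial\Om\setminus B(b,R_0))\simge\rho(R_0)^{p-\theta}\nu(B(b,R_0+1)).
\]
Since $R_0\simeq1$, this is $\simeq\rho(1)^{p-\theta}\nu(B(b,2))$, and the large-$r$ case follows. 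The right-hand side is positive, which gives positivity for all $r$ once the small case is done.

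\emph{Small radii.} Let $r<r_0$. Then both $r$ and $2r$ are at most $C_1\rho(1)/C_A$, so \eqref{balls-at-infinity} applies to both. Using its right inclusion for $2r$ and its left inclusion for $r$,
\[
\nu_\rho(B_\rho(\infty,2r))\le\nu_\rho\bigl(\partial\Om\setminus B(b,t_1)\bigr),\qquad \nu_\rho(B_\rho(\infty,r))\ge\nu_\rho\bigl(\partial\Om\setminus B(b,t_2)\bigr),
\]
where $t_1=h^{-1}(2r/C_1)$ and $t_2=2h^{-1}(r/(2C_2C_AC_B))$. Both are at least $1$. Condition~\ref{condD} bounds the first quantity by $\rho(t_1)^{p-\theta}\nu(B(b,t_1+1))$, and \eqref{condDreverse} bounds the second from below by $\rho(t_2)^{p-\theta}\nu(B(b,t_2+1))$.

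\emph{Main obstacle.} It remains to show $t_2\simeq t_1$ with constants depending only on the data. Iterating $h^{-1}(\tau)\simeq h^{-1}(\tau/2)$ a bounded number of times, since the ratio of the arguments is $4C_2C_AC_B/C_1$, gives $t_2\lesssim t_1$. Monotonicity of $h^{-1}$ gives $t_1\le t_2/2$. Once $t_1\simeq t_2$, Condition~\ref{condA}, iterated boundedly many times, gives $\rho(t_1)\simeq\rho(t_2)$, and doubling of $\nu$ gives $\nu(B(b,t_1+1))\simeq\nu(B(b,t_2+1))$. This yields the claimed estimate. Finiteness for small $r$ follows from finiteness of the total mass.
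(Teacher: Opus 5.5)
Your proposal is correct and follows the paper's proof essentially step for step. It uses the same threshold $C_1\rho(1)/(2C_A)$. For small radii it combines \eqref{balls-at-infinity}, Condition~\ref{condD}, \eqref{condDreverse}, the doubling of $h^{-1}$ and Condition~\ref{condA}, and for large radii it compares with the total mass $\nu_\rho(\partial_\rho\Omega)$; splitting at $B(b,1)$ to bound the total mass is only a more direct version of the paper's limit $r\to 0$ in Condition~\ref{condD}.

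One justification needs tightening: the claim $R_0\simeq 1$. The scaling $h^{-1}(\tau)\simeq h^{-1}(\tau/2)$ gives only relative comparisons, and Condition~\ref{condA} gives only the lower bound $h^{-1}(\tau)\ge 1$ for $\tau\le\rho(1)/C_A$. The upper bound $R_0\lesssim 1$ needs the quantitative decay of $h$, which comes from Condition~\ref{condB}; this is \cite[Lemma~2.2]{KRST}, which the paper invokes to get $h^{-1}(\rho(1)/C_A)\simeq 1$. Likewise, $h$ is comparable to a monotone function only by \ref{condA} and \ref{condB} together, not by \ref{condA} alone.
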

\begin{proof} 
From Conditions \ref{condA} and \ref{condD} we get that
\begin{equation*}
\nu_{\rho}(\partial\Omega \setminus B(b,r))
\leq C_D \rho(r)^{p-\theta} \nu(B(b,r+1))
\leq C_D C_A^{p-\theta} \rho(1)^{p-\theta} \nu(B(b,2))
\end{equation*}
for every $0 < r \leq 1$.
Thus by taking the limit $r \to 0$, we get that
\begin{equation}
\label{measure-of-boundary}
\nu_{\rho}(\partial_{\rho}\Omega)
\leq C_D C_A^{p-\theta} \rho(1)^{p-\theta} \nu(B(b,2))
< \infty
\end{equation}
and this shows the final inequality.
Moreover, since $\rho>0$ and $\nu(B_\rho(\infty,r))>0$ by \eqref{balls-at-infinity}, we have that $\nu_{\rho}(B_\rho(\infty,r))>0$.

If $0<r\le \frac{C_1  \rho(1)}{2C_A}$, then by using \eqref{balls-at-infinity}, 
condition \ref{condD}, condition \ref{condA}, the doubling properties of $h^{-1}$ and $\nu$, and \eqref{condDreverse} we get
\begin{align*}
\nu_{\rho}(B_{\rho}(\infty,2r))
&\leq \nu_{\rho} \left( \overline\Omega^{d_{\rho}} \setminus B(b,h^{-1}(2 r / C_1)) \right)
= \int_{\partial\Omega \setminus B(b,h^{-1}(2r/C_1))} \rho(|x|)^{p-\theta} d\nu(x)
\\
&\leq C_D \rho(h^{-1}(2r/C_1))^{p-\theta} \nu(B(b,h^{-1}(2r/C_1)+1))
\\
&\simeq \rho \left( 2h^{-1} \left( \frac{r}{2 C_2 C_A C_B} \right) \right)^{p-\theta} \nu \left( B \left( b,2h^{-1} \left( \frac{r}{2 C_2 C_A C_B} \right) +1 \right) \right)
\\
&\simle \int_{\partial\Omega \setminus B \left( b,2h^{-1} \left( \frac{r}{2 C_2 C_A C_B} \right) \right) } \rho(|x|)^{p-\theta} d\nu(x)
\\
&= \nu_{\rho} \left( \partial_{\rho}\Omega \setminus B \left( b,2h^{-1} \left( \frac{r}{2 C_2 C_A C_B} \right) \right) \right)
\leq \nu_{\rho}(B_{\rho}(\infty,r)).
\end{align*}
If $r\ge \frac{C_1  \rho(1)}{2 C_A}$, then similarly we get
\begin{equation*}
\nu_{\rho}(B_\rho(\infty,r))
\geq \nu_{\rho} \left( B_\rho \left( \infty,\frac{C_1  \rho(1)}{2 C_A} \right) \right)
\simge \nu_{\rho}(\partial_{\rho}\Omega)
\geq \nu_{\rho}(B_\rho(\infty,2r)).
\end{equation*}
Here we also used \eqref{measure-of-boundary} and the fact that $h^{-1}\left( \frac{\rho(1)}{C_A} \right) \simeq 1$, which follows from \cite[Lemma 2.2]{KRST}.
\end{proof}

\begin{theorem}\label{thm:nu-rho-doubling}
The measure $\nu_{\rho}$ is doubling in the space $(\partial_{\rho}\Omega,d_{\rho})$ i.e. there exists a constant $C_{\nu_{\rho}} \geq 1$ that depends on $C_U$, $C_{\mu}$, $p$, $\theta$, $C_{\theta}$, $\kappa$, $C_A$, $C_B$ and $C_D$ such that
\begin{equation*}
0
< \nu_{\rho} (B_{\rho}(x,2r))
\leq C_{\nu_{\rho}} \nu_{\rho} (B_{\rho}(x,r))
< \infty
\end{equation*}
for every $x \in \partial_{\rho}\Omega$ and $r > 0$.
\end{theorem}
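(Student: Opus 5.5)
We will treat three regimes for the center $x\in\partial_\rho\Om$ and radius $r>0$: the center at infinity, small balls away from infinity, and large balls away from infinity. The case $x=\infty$ is exactly Lemma~\ref{doubling-at-infinity}. Finiteness of $\nu_\rho(B_\rho(x,2r))$ always follows from \eqref{measure-of-boundary}. For $x=b$ we can either approximate $b$ by boundary points $x'\ne b$ with $d_\rho(x',b)$ tiny and use the containments $B_\rho(x',r/2)\subset B_\rho(b,r)$ and $B_\rho(b,2r)\subset B_\rho(x',4r)$, or run the small-ball argument below directly using $\rho(|y|)\simeq \rho(1)$ near $b$. So it suffices to take $x\in\partial\Om\setminus\{b\}$.

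\emph{Small balls.} Suppose $0<r\le c_0 d_\rho(x,\infty)$, so that $2r\le 2c_0d_\rho(x,\infty)$ and Lemma~\ref{small-balls} applies to both radii $r$ and $2r$. On $B_\rho(x,2r)\setminus\{b\}$ we have $\rho(|y|)\simeq\rho(|x|)$, so $\nu_\rho$ restricted there is comparable to $\rho(|x|)^{p-\theta}\nu$. We use that $\nu(\{b\})=0$ and that $\nu_\rho$ gives no mass to $\{b\}$ or $\{\infty\}$. The lemma then gives
\[
\nu_\rho(B_\rho(x,2r))\le C\rho(|x|)^{p-\theta}\nu\bigl(B(x,2a_2r/\rho(|x|))\bigr)
\]
and
\[
\nu_\rho(B_\rho(x,r))\ge C^{-1}\rho(|x|)^{p-\theta}\nu\bigl(B(x,a_1r/\rho(|x|))\bigr).
\]
The doubling property of $\nu$ on $\partial\Om$, which comes from codimensionality, with ratio $2a_2/a_1$ closes this case. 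Positivity holds because $\nu$ charges every ball centered on $\partial\Om$.

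\emph{Large balls.} Suppose $r> c_0 d_\rho(x,\infty)$. Then $B_\rho(x,2r)\subset B_\rho(\infty,2r+d_\rho(x,\infty))\subset B_\rho(\infty,(2+1/c_0)r)$. For the lower bound we want a ball around $\infty$ of radius comparable to $r$ contained in $B_\rho(x,r)$.

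If $d_\rho(x,\infty)\le r/2$, then $B_\rho(\infty,r/2)\subset B_\rho(x,r)$, and Lemma~\ref{doubling-at-infinity} iterated a bounded number of times gives
\[
\nu_\rho(B_\rho(x,2r))\lesssim \nu_\rho(B_\rho(\infty,r/2))\le\nu_\rho(B_\rho(x,r)).
\]

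If instead $c_0d_\rho(x,\infty)<r<d_\rho(x,\infty)/2$, the radius is comparable to $d_\rho(x,\infty)$ but the ball need not reach $\infty$. In this case we compare with the small ball: $\nu_\rho(B_\rho(x,r))\ge\nu_\rho(B_\rho(x,c_0d_\rho(x,\infty)))$. We then need
\[
\nu_\rho(B_\rho(\infty,Cd_\rho(x,\infty)))\lesssim \nu_\rho(B_\rho(x,c_0 d_\rho(x,\infty))).
\]
This is the main obstacle. By Lemma~\ref{small-balls} and Lemma~\ref{lemma4.2}, the right side is comparable to $\rho(|x|)^{p-\theta}\nu(B(x,c(|x|+1)))$, which is comparable to $\rho(|x|)^{p-\theta}\nu(B(b,|x|+1))$ by doubling of $\nu$, since $B(b,|x|+1)\subset B(x,2|x|+1)$. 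For the left side, \eqref{balls-at-infinity} together with the doubling of $h^{-1}$ shows that $B_\rho(\infty,Cd_\rho(x,\infty))$ is contained in the complement of $B(b,c'|x|)$ when $|x|$ is large. Condition~\ref{condD} then bounds its $\nu_\rho$ measure by $\rho(c'|x|)^{p-\theta}\nu(B(b,c'|x|+1))$, which is comparable to the previous quantity by Condition~\ref{condA} and doubling of $\nu$.

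When $|x|$ is bounded, say $|x|\lesssim1$, then $d_\rho(x,\infty)\simeq\rho(1)$ and the left side is at most $\nu_\rho(\partial_\rho\Om)\lesssim\rho(1)^{p-\theta}\nu(B(b,2))$ by \eqref{measure-of-boundary}. This is again comparable to the right side. Here some care is needed with the range of $\tau$ in \eqref{balls-at-infinity}, which we handle by splitting into the cases $|x|\ge1$ and $|x|<1$ as in the proof of Lemma~\ref{doubling-at-infinity}.
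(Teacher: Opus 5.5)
Your proposal is correct and is essentially the paper's proof. It uses the same three regimes: small balls via Lemma~\ref{small-balls} and doubling of $\nu$; $r\ge 2d_\rho(x,\infty)$ via Lemma~\ref{doubling-at-infinity}; and the intermediate range, bounded above by Condition~\ref{condD} on the complement of a $d$-ball about $b$ of radius $\simeq|x|$, bounded below by a $d$-ball about $x$ of radius $\simeq|x|+1$, with bounded $|x|$ handled by \eqref{measure-of-boundary}. The only variation is that you get the outer inclusion from \eqref{balls-at-infinity} and the doubling of $h^{-1}$, whereas the paper uses the direct estimate $|x|+1\le M(|y|+1)$ for $y\in B_\rho(x,r)$.

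Two small slips need fixing.
\begin{enumerate}
\item Your intermediate subcase should be $c_0d_\rho(x,\infty)<r<2d_\rho(x,\infty)$. As written, radii with $d_\rho(x,\infty)/2\le r<2d_\rho(x,\infty)$ fall in neither large-ball subcase. Your argument covers them verbatim, using $B_\rho(x,2r)\subset B_\rho(\infty,5d_\rho(x,\infty))$.
\item The split between bounded and large $|x|$ cannot be made at $|x|=1$. At $|x|=1$ the radius $Cd_\rho(x,\infty)$ already exceeds the range $C_1\rho(1)/C_A$ allowed in \eqref{balls-at-infinity}. Split instead at a threshold $T_0$ depending only on the structural data, chosen so that \eqref{balls-at-infinity}, the doubling of $h^{-1}$, and $h^{-1}(h(|x|))\simeq|x|$ all hold for $|x|\ge T_0$. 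Then $\rho(|x|)\simeq\rho(1)$ for $|x|<T_0$ by iterating Condition~\ref{condA}.
\end{enumerate}
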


The proof of this theorem is similar to that of~\cite[Theorem 4.10]{KRST}, but for the reader's convenience we provide a proof here, especially since the curves used to construct the metric $d_\rho$ on $\partial_\rho\Om$ do not necessarily lie in $\partial_\rho\Om$.

\begin{proof}
This is already shown for the case $x = \infty$. If we show this for points other than $b$, then the result also follows for $x = b$ by approximating $b$ with a sequence in $\partial\Omega$, which is possible, because $(\partial\Omega,d)$ is uniformly perfect.

Thus let us assume $x \in \partial\Omega \setminus \{b\}$.
We get the final inequality from \eqref{measure-of-boundary}.
For the first inequality we get from Lemma \ref{small-balls} that
\begin{equation*}
\nu_{\rho}(B_{\rho}(x,r))
\geq \nu_{\rho}(B_{\rho}(x,\min\{r,2c_0d_{\rho}(x,\infty)\}))
\geq \nu_{\rho} \left( B \left( x,\frac{a_1 \min\{r,2c_0d_{\rho}(x,\infty)\}}{\rho(|x|)} \right) \right).
\end{equation*}
This is positive, because $\rho$ is positive and the $\nu$-measure of the ball is positive, because
\begin{equation*}
a_1 \min\{r,2c_0d_{\rho}(x,\infty)\} / \rho(|x|)
> 0.
\end{equation*}

The rest of the proof is divided into the following three cases.
\begin{enumerate}
\item[(1)] $0 < r \leq c_0 d_{\rho}(x,\infty)$
\item[(2)] $r \geq 2 d_{\rho}(x,\infty)$
\item[(3)] $c_0 d_{\rho}(x,\infty) \leq r \leq 2 d_{\rho}(x,\infty)$
\end{enumerate}
Here $c_0$ is the constant in Lemma \ref{small-balls}.

In Case~(1) we get from Lemma \ref{small-balls} and the doubling property of $\nu$ that
\begin{align*}
\nu_{\rho}(B_{\rho}(x,2r))
&\simeq \rho(|x|)^{p-\theta} \nu(B_{\rho}(x,2r))
\leq \rho(|x|)^{p-\theta} \nu \left( B \left( x, \frac{2 a_2 r}{\rho(|x|)} \right) \right)
\\
&\simeq \rho(|x|)^{p-\theta} \nu \left( B \left( x, \frac{a_1 r}{\rho(|x|)} \right) \right)
\leq \rho(|x|)^{p-\theta} \nu(B_{\rho}(x,r))
\simeq \nu_{\rho}(B_{\rho}(x,r)).
\end{align*}
This completes the proof of Case~(1).

For Case~(2) we have by Lemma \ref{doubling-at-infinity}
\begin{equation*}
\nu_{\rho}(B_{\rho}(x,2r))
\leq \nu_{\rho}(B_{\rho}(\infty,3r))
\simle \nu_{\rho}(B_{\rho}(\infty,r/2))
\leq \nu_{\rho}(B_{\rho}(x,r)).
\end{equation*}

Now we move on to Case~(3). We shall show that
\begin{equation*}
\nu_{\rho}(B_{\rho}(x,r))
\simeq \rho(|x|)^{p-\theta} \nu(B(x,|x|+1)),
\end{equation*}
if $c_0 d_{\rho}(x,\infty) \leq r \leq 4 d_{\rho}(x,\infty)$. Since the right hand side does not depend on $r$, this will complete the proof.

We start by proving the "$\simle$" part. Thus let us assume that $y \in B_{\rho}(x,r) \setminus \{b,\infty\}$.
The first step is to show that there exists a constant $M \geq 2$ that depends only on $C_U$, $C_A$ and $C_B$, such that $|x|+1 \leq M(|y|+1)$. If $|x|+1 \leq 2(|y|+1)$, the claim holds for any $M \geq 2$.
If $|x|+1 \geq 2(|y|+1)$ i.e. $|x| \geq 2|y|+1 \geq |y|$, then we get  by Lemma \ref{lemma4.2} that
\begin{equation*}
\rho(|y|)(|y|+1)
\simeq d_{\rho}(x,y)
< 4 d_{\rho}(x,\infty)
\simeq \rho(|x|)(|x|+1)
\simle \left( \frac{|y|+1}{|x|+1} \right)^{\frac{1}{C_A C_B}} (|y|+1) \rho(|y|),
\end{equation*}
where we also used the fact that $d_{\rho}(x,y) < r \leq 4d_{\rho}(x,\infty)$ and \cite[Lemma 2.2]{KRST}. Therefore there exists a constant $M$ such that $|x|+1 \leq M(|y|+1)$.

The next step is to deal separately with the cases $|x|+1 \leq M$ and $|x|+1 > M$. We start with $|x|+1 \leq M$. In this case we get from Condition \ref{condA} that $\rho(1) \simeq \rho(|x|)$ and therefore by \eqref{measure-of-boundary} and the doubling property of $\nu$,
\begin{align*}
\nu_{\rho}(B_{\rho}(x,r))
\leq \nu_{\rho}(\partial_{\rho}\Omega)
\simle \rho(1)^{p-\theta} \nu(B(b,2))
\simle \rho(|x|)^{p-\theta} \nu(B(b,1)) 
\leq\rho(|x|)^{p-\theta} \nu(B(x,|x|+1)).
\end{align*}

Now let $|x|+1 > M$.
Let us denote $r_x := \frac{1}{M}(|x|+1) - 1 > 0$. Then $|y| \geq r_x$ for every $y \in B_{\rho}(x,r) \setminus \{b,\infty\}$ and $\rho(|x|) \simeq \rho(r_x)$ by Condition \ref{condA}. Thus by Condition \ref{condD},
\begin{align*}
\nu_{\rho}(B_{\rho}(x,r))
&= \int_{B_{\rho}(x,r)} \rho(|y|)^{p-\theta} d \nu(y)
\leq \int_{\partial\Omega \setminus B(b,r_x)} \rho(|y|)^{p-\theta} d \nu(y)
\leq C_D \rho(r_x)^{p-\theta} \nu(B(b,r_x+1))
\\
&= C_D \rho(r_x)^{p-\theta} \nu \left( B \left( b,\frac{1}{M}(|x|+1) \right) \right)
\simeq \rho(|x|)^{p-\theta} \nu(B(x,|x|+1)).
\end{align*}
Thus we have shown the "$\simle$" part.

Now let us prove the "$\simge$" part.
We show that there exists a positive constant $a$ depending only on $C_U$, $C_A$ and $C_B$ such that 
\begin{equation*}
B(x,a(|x|+1)) \setminus \{b\}
\subset B_{\rho}(x,r).
\end{equation*}
So assume that $y \in B(x,a(|x|+1)) \setminus \{b\}$.
Then by the triangle inequality,
\begin{equation*}
\frac{1}{2}(|x|+1)
\leq |y|+1
\leq 2(|x|+1),
\end{equation*}
if we set $a \leq 1/2$.
Thus by Condition \ref{condA} we have $\rho(|y|) \simeq \rho(|x|)$.
Then by Lemmas~\ref{lemma4.1} and~\ref{lemma4.2}, 
\begin{equation*}
d_{\rho}(x,y)
\simeq \rho(|x|) d(x,y)
< a \rho(|x|) (|x|+1)
\simeq a c_0 d_{\rho}(x,\infty)
\leq a r.
\end{equation*}
Thus by setting $a$ small enough, we get $B(x,a(|x|+1)) \setminus \{b\} \subset B_{\rho}(x,r)$. Therefore,
\begin{align*}
\nu_{\rho}(B_{\rho}(x,r))
&= \int_{B_{\rho}(x,r)} \rho(|y|)^{p-\theta} d \nu(y)
\geq \int_{B(x,a(|x|+1))} \rho(|y|)^{p-\theta} d \nu(y)
\simeq \rho(|x|)^{p-\theta} \nu(B(x,a(|x|+1)))
\\
&\simeq \rho(|x|)^{p-\theta} \nu(B(x,|x|+1)).
\end{align*}
This completes the proof.
\end{proof}

Now we are ready to prove that $\nu_{\rho}$ and $\mu_{\rho}$ have the codimension property.

\begin{theorem}\label{thm:nu-rho-codim}
There exists a constant $C_{\theta,\rho} \geq 1$ that depends on $C_U$, $C_{\mu}$, $p$, $\theta$, $C_{\theta}$, $\kappa$, $C_A$, $C_B$, $C_C$ and $C_D$ such that
\begin{equation*}
\frac{1}{C_{\theta,\rho}} \frac{\mu_{\rho}(B_{\rho}(x,r))}{r^\theta}
\leq \nu_{\rho}(B_{\rho}(x,r))
\leq C_{\theta,\rho} \, \frac{\mu_{\rho}(B_{\rho}(x,r))}{r^\theta}
\end{equation*}
for every $x \in \partial_{\rho}\Omega$ and $0<r<2\diam_{\rho}(\partial_{\rho} \Omega)$.
\end{theorem}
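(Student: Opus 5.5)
I would mirror the three-case structure used in the proof of Theorem~\ref{thm:nu-rho-doubling}. First I treat $x\in\partial\Om\setminus\{b\}$, and handle $x=b$ by approximation through uniform perfectness, exactly as there. The case $x=\infty$ is treated separately using \eqref{balls-at-infinity}.

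\emph{Small radii, $0<r\le c_0 d_\rho(x,\infty)$.} Lemma~\ref{small-balls} sandwiches $B_\rho(x,r)$ between $B(x,a_1 r/\rho(|x|))$ and $B(x,a_2r/\rho(|x|))$, and on this set $\rho(|y|)\simeq\rho(|x|)$. Writing $s=r/\rho(|x|)$ and using the doubling of $\mu$ and $\nu$, I get
\begin{equation*}
\nu_\rho(B_\rho(x,r))\simeq \rho(|x|)^{p-\theta}\nu(B(x,s)),\qquad \mu_\rho(B_\rho(x,r))\simeq \rho(|x|)^{p}\mu(B(x,s)).
\end{equation*}
The codimensionality of $\nu$ gives $\nu(B(x,s))\simeq \mu(B(x,s))s^{-\theta}$; this is valid since $\partial\Om$ is unbounded, so every $s$ is admissible. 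Substituting, I obtain $\rho(|x|)^{p-\theta}\mu(B(x,s))\rho(|x|)^{\theta}r^{-\theta}\simeq \mu_\rho(B_\rho(x,r))r^{-\theta}$, which is the claim. One caveat is that $\mu_\rho$ is a measure on $\Om$; since $\mu(\partial\Om)=0$, the $\mu$-measure of the $d$-ball equals that of its trace in $\Om$.

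\emph{Radii comparable to $d_\rho(x,\infty)$.} From the proof of Theorem~\ref{thm:nu-rho-doubling}, $\nu_\rho(B_\rho(x,r))\simeq\rho(|x|)^{p-\theta}\nu(B(x,|x|+1))$. For $\mu_\rho$ I expect the analogous estimate $\mu_\rho(B_\rho(x,r))\simeq \rho(|x|)^p\mu(B(x,|x|+1))$. I would prove it by the same argument, with condition~\ref{condC} replacing \ref{condD}, or cite the corresponding step of \cite[Theorem 4.10]{KRST}. Since $r\simeq d_\rho(x,\infty)\simeq\rho(|x|)(|x|+1)$ by Lemma~\ref{lemma4.2}, codimensionality at scale $|x|+1$ gives the claim, again via $\rho^{p-\theta}=\rho^p\rho^{-\theta}$. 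The large-radius case $r\ge 2d_\rho(x,\infty)$ reduces to balls centred at $\infty$, because $B_\rho(\infty,r/2)\subset B_\rho(x,r)\subset B_\rho(\infty,3r)$, together with doubling of both measures (Lemma~\ref{doubling-at-infinity} and \cite{KRST}).

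\emph{Balls centred at $\infty$.} This is the main obstacle. For $r\le C_1\rho(1)/C_A$, set $R=h^{-1}(r/C_1)$. By \eqref{balls-at-infinity} and the doubling of $h^{-1}$, the ball $B_\rho(\infty,r)$ is comparable to the complement of $B(b,R)$. Then condition~\ref{condD} with \eqref{condDreverse} gives $\nu_\rho\simeq\rho(R)^{p-\theta}\nu(B(b,R+1))$, and condition~\ref{condC} with \eqref{condCreverse} gives $\mu_\rho\simeq\rho(R)^p\mu(B(b,R+1))$. Codimensionality of $\nu$ converts the former to $\rho(R)^{p-\theta}(R+1)^{-\theta}\mu(B(b,R+1))$. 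It remains to check $r\simeq h(R)=(R+1)\rho(R)$, which follows from the definition of $h^{-1}$, Lemma~\ref{lemma4.2} and \cite[Lemma 2.2]{KRST}. I need some care here since $h$ need not be monotone, and I would appeal to \cite[Lemma 4.2]{KRST}.

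For $r$ up to $2\diam_\rho(\partial_\rho\Om)$, which is comparable to $\rho(1)$, both measures are comparable to the total masses. These are finite by \eqref{measure-of-boundary} and its analogue for $\mu_\rho$. Since $r$ ranges within bounded multiples of the threshold, the estimate at the threshold radius transfers to all such $r$ by doubling.
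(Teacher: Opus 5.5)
Your proposal is correct and uses the same ingredients as the paper: Lemma~\ref{small-balls} for small radii; \eqref{balls-at-infinity}, conditions~\ref{condC} and~\ref{condD}, and \eqref{condCreverse}, \eqref{condDreverse} for balls centred at $\infty$; and doubling to change centres. It is organised differently in the intermediate and top ranges of $r$.

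For $x\in\partial\Om\setminus\{b\}$ and $r>2c_0d_\rho(x,\infty)$, the paper does not separate radii comparable to $d_\rho(x,\infty)$ from larger ones. Since $d_\rho(x,\infty)<r/(2c_0)$, each of $B_\rho(x,r)$ and $B_\rho(\infty,r)$ lies in the $(1+1/(2c_0))$-dilate of the other. Doubling of $\mu_\rho$ and $\nu_\rho$ then reduces directly to the $\infty$-centred case. Your detour through $\nu_\rho(B_\rho(x,r))\simeq\rho(|x|)^{p-\theta}\nu(B(x,|x|+1))$ and its $\mu_\rho$-analogue is valid; the latter does go through with condition~\ref{condC} and the bound $\mu_\rho(\Om)\lesssim\rho(1)^p\mu(B(b,2))$. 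However, your own large-radius reduction already covers this range with constants depending on $c_0$, so the detour only buys explicit formulas for the two measures.

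For $r>C_1\rho(1)/C_A$, the paper computes $\nu_\rho(\partial_\rho\Om)\simeq\rho(1)^{p-\theta}\nu(B(b,1))$ and $\mu_\rho(\Om)\simeq\rho(1)^p\mu(B(b,1))$, then applies codimensionality at scale $1$. Your transfer from the threshold radius by doubling is shorter, and it is that transfer, not finiteness of the total masses, that does the work. Both routes need $\diam_\rho(\partial_\rho\Om)\lesssim\rho(1)$. You state this without proof; it follows from Lemma~\ref{lemma4.2} together with condition~\ref{condA} and the quasi-decrease of $h$.

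Lastly, $r\simeq h(h^{-1}(r/C_1))$ needs neither monotonicity of $h$ nor \cite[Lemma 4.2]{KRST}. The infimum definition of $h^{-1}$ and condition~\ref{condA} give $r/(C_1C_A)<h(h^{-1}(r/C_1))\le C_A r/C_1$ directly.
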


\begin{proof}
If we show this result for any $x \in \partial_{\rho}\Omega \setminus \{b\}$, then it also follows for $x=b$ by approximating $b$ with a sequence in $\partial\Omega$, which is possible, because $(\partial\Omega,d)$ is uniformly perfect. 
Thus let us assume that $x \neq b$. The proof is divided into four cases.
\begin{enumerate}
\item[(1)] $0<r \leq 2 c_0 d_{\rho}(x,\infty)$,
\item[(2)] $x = \infty$ and $0<r\leq \frac{C_1 \rho(1)}{C_A}$,
\item[(3)] $x \neq \infty$ and $2 c_0 d_{\rho}(x,\infty) < r \leq \frac{C_1 \rho(1)}{C_A}$,
\item[(4)] $\frac{C_1 \rho(1)}{C_A} < r < 2 \diam_{\rho}(\partial_{\rho} \Omega)$.
\end{enumerate}
Thus we can use \eqref{balls-at-infinity} when proving Case (2).

In Case~(1), we get from Lemma \ref{small-balls} that
\begin{align*}
\mu_{\rho}(B_{\rho}(x,r))
&\simeq \rho(|x|)^p \mu(B_{\rho}(x,r))
\leq \rho(|x|)^p \mu \left( B \left( x,\frac{a_2 r}{\rho(|x|)} \right) \right)
\simeq \rho(|x|)^p \mu \left( B \left( x,\frac{a_1 r}{\rho(|x|)} \right) \right)
\\
&\simeq \rho(|x|)^p \left( \frac{a_1 r}{\rho(|x|)} \right)^{\theta} \nu \left( B \left( x,\frac{a_1 r}{\rho(|x|)} \right) \right)
\simle \rho(|x|)^{p-\theta} r^{\theta} \nu(B_{\rho}(x,r))
\simeq r^{\theta} \nu_{\rho}(B_{\rho}(x,r)),
\end{align*}
where we also used the doubling property of $\mu$ and the codimensionality of $\mu$ and $\nu$.
Similarly
\begin{align*}
\nu_{\rho}(B_{\rho}(x,r))
&\simeq \rho(|x|)^{p-\theta} \nu(B_{\rho}(x,r))
\leq \rho(|x|)^{p-\theta} \nu \left( B \left( x,\frac{a_2 r}{\rho(|x|)} \right) \right)
\\
&\simeq \rho(|x|)^{p-\theta} \left( \frac{a_2 r}{\rho(|x|)} \right)^{-\theta} \mu \left( B \left( x,\frac{a_2 r}{\rho(|x|)} \right) \right)
\simeq \rho(|x|)^p r^{-\theta} \mu \left( B \left( x,\frac{a_1 r}{\rho(|x|)} \right) \right)
\\
&\leq \rho(|x|)^p r^{-\theta} \mu(B_{\rho}(x,r))
\simeq r^{-\theta} \mu_{\rho}(B_{\rho}(x,r)).
\end{align*}

In Case~(2), we use \eqref{balls-at-infinity} and condition \ref{condD} to get
\begin{align*}
\nu_{\rho}(B_{\rho}(x,r))
&\leq \nu_{\rho} \left( \overline\Omega^{d_{\rho}} \setminus B(b,h^{-1}(r/C_1)) \right)
= \int_{\partial\Omega \setminus B(b,h^{-1}(r/C_1))} \rho(|y|)^{p-\theta} d \nu(y)
\\
&\leq C_D \rho(h^{-1}(r/C_1))^{p-\theta} \nu(B(b,h^{-1}(r/C_1)+1))
\\
&\leq C_D C_{\theta} \rho(h^{-1}(r/C_1))^p \rho(h^{-1}(r/C_1))^{-\theta} (h^{-1}(r/C_1)+1)^{-\theta} \mu(B(b,h^{-1}(r/C_1)+1)).
\intertext{Here we also used the codimensionality of $\mu$ and $\nu$. Then by using condition \ref{condA}, the doubling properties of $\mu$ and $h^{-1}$, and the definition of $h$, we obtain}
&\simeq h(h^{-1}(r/C_1))^{-\theta} \rho \left( 2h^{-1} \left( \frac{r}{2 C_2 C_A C_B} \right) \right)^p \mu \left( B \left( b,2h^{-1} \left( \frac{r}{2 C_2 C_A C_B} \right)+1 \right) \right).
\intertext{Notice that $h(h^{-1}(r/C_1)) > \frac{r}{C_1 C_A}$. Thus by using \eqref{condCreverse} we get}
&\simle \left( \frac{r}{C_1 C_A} \right)^{-\theta} \int_{\Omega \setminus B \left( b,2h^{-1} \left( \frac{r}{2 C_2 C_A C_B} \right) \right) } \rho(|y|)^p d \mu(y)
\\
&\simeq r^{-\theta} \mu_{\rho} \left( \Omega \setminus B \left( b,2h^{-1} \left( \frac{r}{2 C_2 C_A C_B} \right) \right) \right)
\leq r^{-\theta} \mu_{\rho}(B_{\rho}(x,r)),
\end{align*}
where in the last step we again used \eqref{balls-at-infinity}.
By using condition \ref{condC} and the doubling property of $\nu$ we get in a similar way
\begin{align*}
\mu_{\rho}(B_{\rho}(x,r))
&\leq \mu_{\rho} \left( \overline\Omega^{d_{\rho}} \setminus B(b,h^{-1}(r/C_1)) \right)
= \int_{\Omega \setminus B(b,h^{-1}(r/C_1))} \rho(|y|)^p d \mu(y)
\\
&\leq C_C \rho(h^{-1}(r/C_1))^p \mu(B(b,h^{-1}(r/C_1)+1))
\\
&\leq C_C C_{\theta} \rho(h^{-1}(r/C_1))^{p-\theta} \rho(h^{-1}(r/C_1))^{\theta} (h^{-1}(r/C_1)+1)^{\theta} \nu(B(b,h^{-1}(r/C_1)+1))
\\
&\simeq h(h^{-1}(r/C_1))^{\theta} \rho \left( 2h^{-1} \left( \frac{r}{2 C_2 C_A C_B} \right) \right)^{p-\theta} \nu \left( B \left( b,2h^{-1} \left( \frac{r}{2 C_2 C_A C_B} \right) +1 \right) \right).
\intertext{Notice that $h(h^{-1}(r/C_1)) \leq \frac{C_A}{C_1} r$. Thus by using \eqref{condDreverse} we get}
&\simle \left( \frac{C_A}{C_1} r \right)^{\theta} \int_{\partial \Omega \setminus B \left( b,2h^{-1} \left( \frac{r}{2 C_2 C_A C_B} \right) \right) } \rho(|y|)^{p-\theta} d \nu(y)
\\
&\simeq r^{\theta} \nu_{\rho} \left( \partial \Omega \setminus B \left( b,2h^{-1} \left( \frac{r}{2 C_2 C_A C_B} \right) \right) \right)
\leq r^{\theta} \nu_{\rho}(B_{\rho}(x,r)).
\end{align*}

In Case~(3), we use the doubling properties of $\mu_{\rho}$ and $\nu_{\rho}$ together with the result for Case~(2). 
Because $r \simge d_{\rho}(x,\infty)$, we get
\begin{equation*}
\mu_{\rho}(B_{\rho}(x,r))
\simeq \mu_{\rho}(B_{\rho}(\infty,r))
\simeq r^{\theta} \nu_{\rho}(B_{\rho}(\infty,r))
\simeq r^{\theta} \nu_{\rho}(B_{\rho}(x,r)).
\end{equation*}

Finally we consider Case~(4).
Because $d_{\rho}(y,\infty) \simeq \rho(|y|)(|y|+1)$ for every $y \in \Omega$ and $\rho(t)(t+1)$ is quasidecreasing, see~\cite[Remark~2.1]{KRST}, we get that $d_{\rho}(z,y) \leq d_{\rho}(z,\infty) + d_{\rho}(\infty,y) \simeq \rho(|y|)(|y|+1)$ for every $z , y \in \Omega$ such that $|z| \geq |y|$. In particular we get that $d_{\rho}(z,y) \simle \rho(1)$, which means that $\diam_{\rho}(\Omega) \simle \rho(1)$.
Also for any $y \in \partial \Omega$ such that $0<|y| \leq 1$, we have $d_{\rho}(y,\infty) \simeq \rho(|y|)(|y|+1) \simeq \rho(1)$.
Therefore $\rho(1) \simle \diam_{\rho}(\partial_{\rho}\Omega) \leq \diam_{\rho}(\Omega) \simle \rho(1)$ and thus in Case~(4) we have $\diam_{\rho}(\Omega) \simeq \diam_{\rho} (\partial_{\rho} \Omega) \simeq 
\rho(1) \simeq r$.
Also we get from \eqref{condDreverse} and \eqref{measure-of-boundary} that
\begin{equation*}
\rho(1)^{p-\theta} \nu(B(b,2))
\simle \int_{\partial\Omega \setminus B(b,1)} \rho(|y|)^{p-\theta} d\nu(y)
\leq \nu_{\rho}(\partial_{\rho}\Omega)
\leq C_D C_A^{p-\theta} \rho(1)^{p-\theta} \nu(B(b,2))
\end{equation*}
and thus by the doubling property of $\nu$ we have $\nu_{\rho}(\partial_{\rho}\Omega) \simeq \rho(1)^{p-\theta} \nu(B(b,1))$.
On the other hand for any $0 < s \leq 1$
\begin{equation*}
\int_{\Omega \setminus B(b,s)} \rho(|y|)^p d \mu(y)
\leq C_C \rho(s)^p \mu(B(b,s+1))
\leq C_C C_A^p \rho(1)^p \mu(B(b,2)).
\end{equation*}
Thus by letting $s \to 0$ we have $\mu_{\rho}(\Omega) \leq C_C C_A^p \rho(1)^p \mu(B(b,2))$. From \eqref{condCreverse} we get
\begin{equation*}
\rho(1)^p \mu(B(b,2))
\simle \int_{\Omega \setminus B(b,1)} \rho(|y|)^p d \mu(y)
\leq \mu_{\rho}(\Omega).
\end{equation*}
Thus by the doubling property of $\mu$ we have $\mu_{\rho}(\Omega) \simeq \rho(1)^p \mu(B(b,1))$.
Therefore we get
\begin{align*}
\nu_{\rho}(B_{\rho}(x,r))
&\simeq \nu_{\rho}(B_{\rho}(x,\diam_{\rho}(\partial_{\rho} \Omega)))
\simeq \nu_{\rho}(\partial_{\rho}\Omega)
\simeq \rho(1)^{p-\theta} \nu(B(b,1))
\simeq \rho(1)^{-\theta} \rho(1)^p \mu(B(b,1))
\\
&\simeq r^{-\theta} \mu_{\rho}(\Omega)
\simeq r^{-\theta} \mu_{\rho}(B_{\rho}(x,\diam_{\rho}(\Omega)))
\simeq r^{-\theta} \mu_{\rho}(B_{\rho}(x,r)).
\end{align*}
Here we used the doubling properties of $\mu_{\rho}$ and $\nu_{\rho}$ and the codimensionality of $\mu$ and $\nu$.
\end{proof}

\section{Quasi-preservation of Besov energy}
\label{sect-Besov}

As in Section~2, by structural data we mean the constants $C_U$, $C_{\mu}$, $p$, $\lambda$, $C_P$, $\theta$, $C_{\theta}$, $\kappa$, $C_A$, $C_B$, $C_C$ and $C_D$.
Of these, $C_U$, $C_\mu$, $p$, $\lambda$, $C_P$, $\theta$, $C_\theta$ and $\kappa$ are structural data associated with the space $(\Om,d,\mu)$ and the measure $\nu$.
In this section we explore how Besov spaces are transformed under the sphericalization procedure considered in the present paper.
We now know that 
\begin{enumerate}
\item when $\nu$ is $\theta$-codimensional with respect to $\mu$, the transformed measure $\nu_\rho$ is $\theta$-codimensional with respect to the transformed measure $\mu_\rho$,
\item when $(\Om, d)$ is a uniform domain with boundary $\partial\Om$, the transformed domain $(\Om, d_\rho)$ is also a uniform domain with boundary $\partial_\rho\Om=\partial\Om\cup\{\infty\}$,
\item when $(\Om, d, \mu)$ is doubling and supports a $p$-Poincar\'e inequality, the transformed space $(\Om, d_\rho, \mu_\rho)$ also is doubling and supports a $p$-Poincar\'e inequality.
\end{enumerate}
Given the above three, we have the following proposition by Malý~\cite{Maly} (see also \cite{GibaraSh}).

\begin{proposition}\label{prop:ExtTrace}
There are bounded linear trace operators
\[
T_d:D^{1,p}(\Om,d,\mu)\to \HB^{1-\theta/p}_{p,p}(\partial\Om,d,\nu)
\]
and
\[
T_\rho:N^{1,p}(\Om,d_\rho, \mu_\rho)\to B^{1-\theta/p}_{p,p}(\partial_\rho\Om,d_\rho,\nu_\rho)
\]
and bounded linear extension operators
\[
E_d: \HB^{1-\theta/p}_{p,p}(\partial\Om,d,\nu)\to D^{1,p}(\Om,d,\mu)
\]
and
\[
E_\rho:B^{1-\theta/p}_{p,p}(\partial_\rho\Om,d_\rho,\nu_\rho)\to N^{1,p}(\Om,d_\rho,\mu_\rho)
\]
such that $T_d\circ E_d$ is the identity map on $\HB^{1-\theta/p}_{p,p}(\partial\Om,d,\nu)$ while $T_\rho\circ E_\rho$ is the identity map on $B^{1-\theta/p}_{p,p}(\partial_\rho\Om,d_\rho,\nu_\rho)$.

For $u\in D^{1,p}(\Om, d,\mu)$ and for $\nu$-a.e.~$\xi\in \partial\Om$, we have
\[
T_du(\xi)=\lim_{r\to 0^+}\vint_{B(\xi,r)}u\, d\mu=\lim_{r\to 0^+}\vint_{B_\rho(\xi,r)} u\, d\mu_\rho=T_\rho u(\xi),
\]
and so we can unambiguously denote $T_du=T_\rho u\vert_{\partial\Om}$ by $Tu$.
For each $\xi\in\partial\Om$ where the above Lebesgue value of $u$ exists, we set $T_d u(\xi)$ to be that value; a similar statement holds for $T_\rho u$.
This determines the value of $Tu$ more precisely than $\nu$-almost everywhere.

Moreover, for each $u\in N^{1,p}(\Om,d_\rho,\mu_\rho)$ and $v\in B^{1-\theta/p}_{p,p}(\partial_\rho\Om, d_\rho,\nu_\rho)$ we have the following energy bounds:
\begin{align*}
\|T_\rho u\|_{B^{1-\theta/p}_{p,p}(\partial_\rho\Om, d_\rho,\nu_\rho)}&\lesssim \|g_u\|_{L^p(\Om,\mu_\rho)},\\
\|g_{E_\rho v}\|_{L^p(\Om,\mu_\rho)} &\lesssim \|v\|_{B^{1-\theta/p}_{p,p}(\partial_\rho\Om, d_\rho,\nu_\rho)}.
\end{align*}
Here the minimal $p$-weak upper gradients are with respect to the space $(\Omega,d_{\rho},\mu_{\rho})$.
The comparison constants associated with the above estimates depend only on the structural data. 
The bounds for the linear operators $T_d$ and $E_d$ depend solely on the structural data associated with $(\Om,d,\mu)$ and $\nu$.
\end{proposition}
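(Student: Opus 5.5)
This proposition is essentially an application of Malý's trace and extension theorem, together with the Lebesgue-point characterization of traces. We apply that theorem twice: once to $(\Om,d,\mu)$ with $\nu$, and once to $(\Om,d_\rho,\mu_\rho)$ with $\nu_\rho$. Then we check that the two resulting trace operators agree on $\partial\Om$.

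First, the hypotheses of Malý's theorem in the homogeneous setting. The domain $(\Om,d)$ is uniform, $\mu$ is doubling on $\Om$, and $(\Om,d,\mu)$ supports a $p$-Poincar\'e inequality by \cite[Theorem 4.4]{bjornshanmugalingam}. The boundary measure $\nu$ is $\theta$-codimensional with $0<\theta<p$. Hence \cite{Maly} directly gives bounded linear $T_d$ and $E_d$ between $D^{1,p}(\Om,d,\mu)$ and $\HB^{1-\theta/p}_{p,p}(\partial\Om,d,\nu)$, with $T_d\circ E_d=\mathrm{id}$. The trace is given $\nu$-a.e.\ as the limit of ball averages, and the constants depend only on the data of $(\Om,d,\mu)$ and $\nu$.

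Second, the sphericalized space. By \cite{KRST}, $(\Om,d_\rho)$ is a bounded uniform domain with boundary $\partial_\rho\Om$, and $\mu_\rho$ is doubling and supports a $p$-Poincar\'e inequality. By Theorems~\ref{thm:nu-rho-doubling} and~\ref{thm:nu-rho-codim}, $\nu_\rho$ is doubling and $\theta$-codimensional with respect to $\mu_\rho$. Malý's bounded-domain version then yields $T_\rho$ and $E_\rho$ between $N^{1,p}$ and $B^{1-\theta/p}_{p,p}$. It also gives the stated energy estimates involving only $g_u$ and the Besov seminorm. These homogeneous estimates hold because the operators commute with adding constants, and because the inhomogeneous $L^p$ parts are controlled separately on the bounded space.

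Third, the identification $T_du=T_\rho u$ on $\partial\Om$, which is the main point requiring care. Fix $\xi\in\partial\Om\setminus\{b\}$ and take $r$ small. By Lemma~\ref{small-balls}, the sets $B_\rho(\xi,r)\setminus\{b\}$ lie between $B(\xi,a_1r/\rho(|\xi|))$ and $B(\xi,a_2r/\rho(|\xi|))$. On these balls $\rho(|\cdot|)\simeq\rho(|\xi|)$ up to $C_A$, so $d\mu_\rho$ is comparable to a constant multiple of $d\mu$ there.

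Suppose $\xi$ is a Lebesgue point of $u$ in the strong sense $\lim_{s\to0}\vint_{B(\xi,s)}|u-T_du(\xi)|\,d\mu=0$. Then
\[
\vint_{B_\rho(\xi,r)}|u-T_du(\xi)|\,d\mu_\rho\lesssim \vint_{B(\xi,a_2r/\rho(|\xi|))}|u-T_du(\xi)|\,d\mu\to0,
\]
using doubling of $\mu$ to compare the two balls. Hence the $\mu_\rho$-averages converge to the same value.

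It remains to know that $\nu$-a.e.\ point is such a Lebesgue point for $u\in D^{1,p}(\Om,d,\mu)$. This follows from the codimension-$\theta$ Hausdorff-a.e.\ Lebesgue point result used in the proof of Proposition~\ref{prop:capest}, localized to balls, combined with Lemma~\ref{lem:capacity2nu}. The symmetric argument applies to $T_\rho$-Lebesgue points, which exist $\nu_\rho$-a.e. Moreover $\nu$ and $\nu_\rho$ have the same null sets on $\partial\Om$, since $\rho>0$.

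A subtle point is that $u\in D^{1,p}(\Om,d,\mu)$ lies in $N^{1,p}(\Om,d_\rho,\mu_\rho)$. This holds because $\rho g_u$ being an upper gradient in $d$ corresponds to $g_u$ being one in $d_\rho$, and because $\mu_\rho(\Om)<\infty$ together with the Poincar\'e inequality on the bounded space gives $u\in L^p(\mu_\rho)$. This step is needed for $T_\rho u$ to make sense, and I expect it to be the main technical check.
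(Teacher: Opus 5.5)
Your proposal is correct and takes essentially the paper's route: the paper simply invokes Mal\'y's trace and extension theorem (with \cite{GibaraSh} for the homogeneous, unbounded case) on both $(\Om,d,\mu,\nu)$ and $(\Om,d_\rho,\mu_\rho,\nu_\rho)$, relying on the preservation of uniformity, doubling, the Poincar\'e inequality and $\theta$-codimensionality under sphericalization. Your comparison of $\mu$- and $\mu_\rho$-averages at strong Lebesgue points via Lemma~\ref{small-balls} spells out the identification $T_du=T_\rho u$ that the paper leaves implicit. You could also take the $\nu$-a.e.\ strong Lebesgue-point property directly from the trace theorem in \cite{Maly}; this avoids the capacity detour, which as you phrase it would first require extending $u$ to an $N^{1,p}_{\rm loc}$ function on $\overline\Om^d$, e.g.\ via \cite{AikawaShan-Carleson}.
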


Recall that subsets of $\partial\Om$ that are $\nu_\rho$-measure zero are precisely the sets of $\nu$-measure zero.
Moreover, $\nu_\rho(\{\infty\})=0$.
However, it is possible for $\{\infty\}$ to have positive $p$-capacity in the space $(\overline{\Om}^{d_\rho}, d_\rho,\mu_\rho)$. When $\{\infty\}$ has positive $p$-capacity, necessarily $\infty$ is a $d_\rho$-Lebesgue point of each $u\in N^{1,p}(\Om,d_\rho,\mu_\rho)$ (see~\cite[Theorem~9.2.8]{HKST}) and so $T_\rho u(\infty)$ is set to be equal to the corresponding Lebesgue value, but $T_du(\infty)$ does not make sense. Hence, by $Tu$ we only mean $T_du=T_\rho u\vert_{\partial\Om}$.


We also know from \cite[Proposition 5.4]{KRST} that if $u \in D^{1,p}(\Omega,d,\mu)$, then $u \in D^{1,p}(\Omega,d_{\rho},\mu_{\rho})$ with the property that
\begin{equation}\label{eq:Energy-isom}
\int_\Om g_u^p\, d\mu=\int_\Om \widehat g_{u}^p\, d\mu_\rho,
\end{equation}
where $g_u$ is an upper gradient of $u$ with respect to the space $(\Om, d,\mu)$ while $\widehat g_{u}$ is an upper gradient of $u$ with respect to the space $(\Om,d_\rho,\mu_\rho)$ with
\begin{equation}\label{eq:minimal-weak-transform}
\widehat g_{u}=\frac{1}{\rho}\, g_u.
\end{equation}
Since $p$-weak upper gradients that are $p$-integrable are approximable in $L^p$-spaces by upper gradients, the identity~\eqref{eq:Energy-isom} also holds for $p$-weak upper gradients with respect to $(\Om, d,\mu)$ and $(\Om,d_\rho,\mu_\rho)$ respectively. Such an approximation gives the corresponding relationship~\eqref{eq:minimal-weak-transform} between the minimal $p$-weak upper gradient of $u$ with respect to $(\Om, d,\mu)$ and the minimal $p$-weak upper gradient of $u$ with respect to $(\Om,d_\rho,\mu_\rho)$.
From~\cite[Theorem~9.1.2]{HKST}, we know that the support of $p$-Poincar\'e inequality implies the stronger inequality where the left-hand side also is an $L^p$-variance; hence it follows that functions in $D^{1,p}$-spaces over a bounded metric measure space equipped with a doubling measure supporting a $p$-Poincar\'e inequality are in the global $L^p$-class.
Since $(\Om,d_\rho,\mu_\rho)$ is bounded and supports a $p$-Poincar\'e inequality, we have $D^{1,p}(\Omega,d_{\rho},\mu_{\rho}) = N^{1,p}(\Omega,d_{\rho},\mu_{\rho})$.
Next we show that our sphericalization preserves the Besov energy.

\begin{theorem}\label{thm:preserve-Besov}
As sets, we have that $\HB^{1-\theta/p}_{p,p}(\partial\Om, d,\nu)=B^{1-\theta/p}_{p,p}(\partial_\rho\Om, d_\rho,\nu_\rho)$, and moreover, there is a constant $C\ge 1$ such that for each $u\in \HB^{1-\theta/p}_{p,p}(\partial\Om, d,\nu)$ we have that
\[
\frac{1}{C}\, \|u\|_{B^{1-\theta/p}_{p,p}(\partial\Om, d,\nu)}\le \|u\|_{B^{1-\theta/p}_{p,p}(\partial_\rho\Om, d_\rho,\nu_\rho)}
\le C\, \|u\|_{B^{1-\theta/p}_{p,p}(\partial\Om, d,\nu)}.
\]
The constant $C$ depends only on the structural data.
\end{theorem}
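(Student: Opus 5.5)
The plan is to avoid comparing the two Besov kernels directly. Instead I would pass through the Sobolev spaces on $\Om$, using the trace and extension operators of Proposition~\ref{prop:ExtTrace} together with the energy identity~\eqref{eq:Energy-isom}. Two observations make this work. First, $\nu_\rho(\{\infty\})=0$, and $\nu_\rho$ and $\nu$ have the same null sets on $\partial\Om$. Hence a function on $\partial\Om$ determines a unique $\nu_\rho$-a.e.\ class on $\partial_\rho\Om$, and both Besov energies are insensitive to $\nu$-null modifications. Second, the traces agree: $T_d w = T_\rho w\vert_{\partial\Om}$ $\nu$-a.e.\ for every $w\in D^{1,p}(\Om,d,\mu)$, as stated in Proposition~\ref{prop:ExtTrace}.

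For the inclusion $\HB^{1-\theta/p}_{p,p}(\partial\Om,d,\nu)\subset B^{1-\theta/p}_{p,p}(\partial_\rho\Om,d_\rho,\nu_\rho)$ with the upper bound, I would proceed as follows.
\begin{enumerate}
\item Take $u\in \HB^{1-\theta/p}_{p,p}(\partial\Om,d,\nu)$ and set $w:=E_d u\in D^{1,p}(\Om,d,\mu)$, so that $\|g_w\|_{L^p(\Om,\mu)}\lesssim \|u\|_{B^{1-\theta/p}_{p,p}(\partial\Om,d,\nu)}$.
\item By~\eqref{eq:Energy-isom} and~\eqref{eq:minimal-weak-transform}, $w\in D^{1,p}(\Om,d_\rho,\mu_\rho)$ with the same $p$-energy.
\item Since $(\Om,d_\rho,\mu_\rho)$ is bounded, doubling and supports a $p$-Poincar\'e inequality, $D^{1,p}(\Om,d_\rho,\mu_\rho)=N^{1,p}(\Om,d_\rho,\mu_\rho)$, so $T_\rho w$ is defined.
\item By Proposition~\ref{prop:ExtTrace}, $T_\rho w\in B^{1-\theta/p}_{p,p}(\partial_\rho\Om,d_\rho,\nu_\rho)$. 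In particular it lies in $L^p(\nu_\rho)$, and its Besov energy is $\lesssim \|\widehat g_w\|_{L^p(\mu_\rho)}=\|g_w\|_{L^p(\mu)}$.
\item Finally, $T_\rho w\vert_{\partial\Om}=T_d w=T_dE_d u=u$ $\nu$-a.e., hence $\nu_\rho$-a.e.\ on $\partial_\rho\Om$. So $u$, viewed as a function on $\partial_\rho\Om$, belongs to $B^{1-\theta/p}_{p,p}(\partial_\rho\Om,d_\rho,\nu_\rho)$ with $\|u\|_{B(\partial_\rho\Om)}\lesssim\|u\|_{B(\partial\Om)}$.
\end{enumerate}

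The reverse inclusion runs symmetrically. Take $v\in B^{1-\theta/p}_{p,p}(\partial_\rho\Om,d_\rho,\nu_\rho)$ and set $w:=E_\rho v\in N^{1,p}(\Om,d_\rho,\mu_\rho)$, whose $d_\rho$-energy is $\lesssim\|v\|_{B(\partial_\rho\Om)}$. Here I need the converse direction of the energy transfer. Section~\ref{sect-spher-def} records that $g$ is a $p$-integrable upper gradient in $(\Om,d_\rho,\mu_\rho)$ if and only if $\rho(|\cdot|)g$ is one in $(\Om,d,\mu)$. Approximating $p$-weak upper gradients by upper gradients then gives $w\in D^{1,p}(\Om,d,\mu)$ with equal energy. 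Applying $T_d$ yields $\|T_dw\|_{B(\partial\Om)}\lesssim\|g_w\|_{L^p(\mu)}$. Moreover, $T_dw=T_\rho w\vert_{\partial\Om}=T_\rho E_\rho v\vert_{\partial\Om}=v$ $\nu$-a.e., which gives the lower bound in the statement. All constants come from Proposition~\ref{prop:ExtTrace}, and so depend only on the structural data.

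I expect the main obstacle to be the bookkeeping in identifying the traces, rather than any estimate. One point is that the equality $T_d w=T_\rho w$ is used for functions produced by $E_d$ or $E_\rho$, which lie in the common space $D^{1,p}(\Om,d,\mu)=N^{1,p}(\Om,d_\rho,\mu_\rho)$. The other is that the extra point $\infty$ is harmless, because $\nu_\rho(\{\infty\})=0$ and only $\nu$-a.e.\ equality matters for both Besov energies. If the limit identity for Lebesgue values in Proposition~\ref{prop:ExtTrace} were not available, I would fall back on Lemma~\ref{small-balls}. Near each $\xi\in\partial\Om\setminus\{b\}$, that lemma shows small $d_\rho$-balls and $d$-balls are comparable, with $\rho$ essentially constant on them. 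This makes the two averages, and hence the two Lebesgue values, coincide.
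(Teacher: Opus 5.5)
Your proposal is correct and follows essentially the same route as the paper: extend with $E_d$ (respectively $E_\rho$), transfer the $p$-energy via~\eqref{eq:Energy-isom}, apply $T_\rho$ (respectively $T_d$), and identify the traces using $T_d=T_\rho\vert_{\partial\Om}$ $\nu$-a.e.\ together with $\nu_\rho(\{\infty\})=0$. The only minor difference is how you get $u\in L^p(\partial_\rho\Om,\nu_\rho)$: you read it off from $T_\rho$ mapping into the inhomogeneous space $B^{1-\theta/p}_{p,p}(\partial_\rho\Om,d_\rho,\nu_\rho)$, whereas the paper cites \cite[Lemma 2.2]{CapognaKlineKorteShanmugalingamSnipes} for this.
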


\begin{proof}
Applying Proposition~\ref{prop:ExtTrace} to $u\in \HB^{1-\theta/p}_{p,p}(\partial\Om, d,\nu)$ and~\eqref{eq:Energy-isom}, we note that $E_d u$ belongs in $D^{1,p}(\Om, d, \mu)$ with
\[
\int_\Om \widehat g_{E_d(u)}^p\, d\mu_\rho=\int_\Om g_{E_d(u)}^p\, d\mu\le C\, \|u\|_{B^{1-\theta/p}_{p,p}(\partial\Om, d,\nu)}^p.
\]
By Proposition~\ref{prop:ExtTrace}, we know that $T_\rho\circ E_d(u)=T_d\circ E_d(u)=u$.
Applying Proposition~\ref{prop:ExtTrace} again, we see that
\[
\|u\|_{B^{1-\theta/p}_{p,p}(\partial_\rho \Om, d_\rho, \nu_\rho)}^p\le C \int_\Om \widehat g_{E_d(u)}^p\, d\mu_\rho
   \le C\, \|u\|_{B^{1-\theta/p}_{p,p}(\partial\Om, d,\nu)}^p.
\]
Once we see that $u\in L^p(\partial_\rho\Om,\nu_\rho)$, which follows from \cite[Lemma 2.2]{CapognaKlineKorteShanmugalingamSnipes}, we get that $\HB^{1-\theta/p}_{p,p}(\partial\Om, d,\nu)$ is a subset of $B^{1-\theta/p}_{p,p}(\partial_\rho\Om, d_\rho,\nu_\rho)$.

Now, if $v\in B^{1-\theta/p}_{p,p}(\partial_\rho \Om, d_\rho, \nu_\rho)$, then again noting that $T_d \circ E_\rho(v)=v$, the boundedness of the operators $T_d$ and $E_\rho$ as described in Proposition~\ref{prop:ExtTrace}, together with \eqref{eq:Energy-isom}, we also see that
\[
\|v\|_{B^{1-\theta/p}_{p,p}(\partial\Om, d,\nu)}^p\le C\, \|v\|_{B^{1-\theta/p}_{p,p}(\partial_\rho \Om, d_\rho, \nu_\rho)}^p,
\]
completing the proof.
\end{proof}

\section{Notions of Dirichlet boundary condition}
\label{sect-dirichlet-notions}

A standard problem in partial differential equations and potential theory is the following. We have a domain $U$ and a differential operator $L$ or an energy operator $I$ on a space of functions on the domain. The associated Dirichlet problem is to find a function $u$ in this space that either satisfies $Lu=0$ in $U$ (in some sense) or minimizes the energy $I$, that is, $I(u)\le I(v)$ for each $v$ in the same function space with both $u$ and $v$ satisfying a fixed Dirichlet boundary condition on $\partial U$. In the setting of Riemannian manifolds, one useful class of operators is the class of $p$-Laplace-Beltrami operators $\Delta_p$, with a Sobolev function $u$ satisfying $\Delta_pu=0$ in a domain $U$ in that manifold if
\[
\int_U|\nabla u|^{p-2}\langle\nabla u, \nabla v\rangle\, d\mu=0,
\]
whenever $v$ is in a Sobolev class with compact support contained in $U$. A Sobolev function $u$ satisfies the above equation if and only if for every Sobolev function $v$ with compact support we have
\[
I(u):=\int_U|\nabla u|^p\, d\mu\le \int_U|\nabla (u+v)|^p\, d\mu=I(u+v).
\]
In the present paper, we are interested in the following Dirichlet problem. 

\begin{definition}\label{def:DProb}
Given a function $f\in HB^{1-\theta/p}_{p,p}(\partial\Om,d,\nu)$, we say that a function $u\in D^{1,p}(\Om,d,\mu)$ solves the Dirichlet problem for $p$-harmonic functions with boundary value $f$ if
\begin{enumerate}
\item $u$ is $p$-harmonic in $\Om$, that is, $u$ is continuous in $\Om$ with
\[
\int_\Om g_u^p\, d\mu\le \int_\Om g_{u+v}^p\, d\mu
\]
whenever $v\in D^{1,p}(\Om,d,\mu)$ with compact support in $\Om$,
\item $T_du=f$ $\nu$-a.e.~on $\partial\Om$.
\end{enumerate}
\end{definition}

If there is a function $u\in D^{1,p}(\Om,d,\mu)$ that satisfies all of the conditions in the above definition except for continuity on $\Om$, then a modification of $u$ on a set of $p$-capacity zero yields a locally H\"older continuous function satisfying all of the conditions in the above definition, see~\cite{KinSh}.

In the formulation of what it means for a function $u$ to be $p$-harmonic in $\Om$, a related notion is that of the so-called Cheeger $p$-harmonicity, where the upper gradient energy $\int_\Om g_u^p\, d\mu$ is replaced by the integral $\int_\Om|\nabla u|^p\, d\mu$ where $\nabla u$ is a Cheeger differential of $u$. Such a differential exists as a measurable linear differential operator on $N^{1,p}(\Om)$ when $(\Om, d,\mu)$ is doubling and supports a $p$-Poincar\'e inequality, see for instance~\cite{HKST}. The advantage of Cheeger $p$-harmonicity is that there is a corresponding weak notion of a differential equation that is the Euler-Lagrange equation corresponding to the minimization of the related integral; however, there are many possible choices of Cheeger differential structures, and hence, unlike the upper gradients, they are not as intrinsic to the metric measure space. However, the results obtained in this section also hold for Cheeger $p$-harmonic solutions as well via mutatis mutandis changes in the proofs.

In formulating the Dirichlet problem we can deviate from the above traditional formulation by asking for $T_d v=0$ $\nu$-a.e.~on $\partial\Om$ rather than that $v$ has compact support in $\Om$. It is reasonable to ask whether if, instead of requiring compact support of $v$ in $\Om$ we require $T_dv=0$ $\nu$-a.e.~on $\partial\Om$, we end up with a different solution to the problem. In this section we will demonstrate that the two formulations agree.
This demonstration is done via Theorem~\ref{lem-zeroboundaryvalues} below, by proving that $D_0^{1,p}(\Om,d,\mu)=N^{1,p}_0(\Om,d_\rho,\mu_\rho)$ as sets, for then the density of compactly supported Lipschitz functions in $N_0^{1,p}(\Om,d_\rho,\mu_\rho)$ as demonstrated in~\cite{Shan-Illinois} together with \eqref{eq:Energy-isom} completes this argument.
Note that compactness in $\Om$ with respect to the topology generated by the metric $d_\rho$ is the same as the compactness with respect to the topology generated by the original metric $d$, since the topology on $\Om$ is preserved, see~\cite[Proposition~3.2]{KRST}. 
The two problems do not coincide in the setting considered in~\cite{GibaraKorteSh}, where $\Om$ was unbounded but $\partial\Om$ was bounded, in which case the two notions differ when $\overline \Omega^d$ is also $p$-hyperbolic. Recall that $\partial\Om$ is unbounded in the present paper.

\begin{remark}\label{rem:Newt-closure}
As $(\Om,d_\rho,\mu_\rho)$ is doubling and supports a $p$-Poincar\'e inequality, we know from~\cite[Proposition~7.1]{AikawaShan-Carleson} that $N^{1,p}(\Om,d_\rho,\mu_\rho)=N^{1,p}(\overline{\Om}^{d_{\rho}}, d_\rho, \mu_\rho)$.
In what follows, $N^{1,p}_0(\Om,d_\rho,\mu_\rho)$ denotes the collection of all $v\in N^{1,p}(\Om,d_\rho,\mu_\rho)=N^{1,p}(\overline{\Om}^{d_{\rho}}, d_\rho, \mu_\rho)$ for which
\[
\cp_p(\{x\in\partial_\rho\Om\, :\, v(x)\ne 0\})=0,
\]
where $\cp_p$ is the $p$-capacity as given in Definition~\ref{def:capacities}.
Here the $p$-capacity is with respect to the space $(\overline\Omega^{d_{\rho}},d_{\rho},\mu_{\rho})$.
The transformed space $(\Om,d_\rho,\mu_\rho)$ is locally compact and $(\overline\Om^{d_{\rho}},d_\rho,\mu_\rho)$ is compact, doubling and supports a $p$-Poincar\'e inequality by \cite[Proposition~7.1]{AikawaShan-Carleson}. Hence, from~\cite[Theorem~4.8]{Shan-Illinois}, we know that $N^{1,p}_0(\Om,d_\rho,\mu_\rho)$ is also obtained as the $N^{1,p}$-norm-closure of Lipschitz functions in $N^{1,p}(\Om,d_\rho,\mu_\rho)$ with compact support contained in $\Om$.
Note that functions that are compactly supported in $(\Om,d_\rho)$ are also compactly supported in $(\Om, d)$.
\end{remark}

\begin{lemma}\label{lem:zero} 
Suppose that $u\in D^{1,p}(\Omega,d,\mu)$ such that $T_du=0$ $\nu$-a.e. on $\partial\Omega$.
With $W$ denoting the set of points $x\in\partial_\rho\Omega$ for which $\limsup_{r\to 0^+} |u_{B_\rho(x,r)}|>0$, we have that $\cp_p(W)=0$ when $\cp_p$ is measured with respect to the space $(\overline{\Om}^{d_\rho},d_\rho,\mu_\rho)$.
\end{lemma}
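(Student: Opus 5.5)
We split $W=(W\cap\partial\Om)\cup(W\cap\{\infty\})$ and use the transformed space $(\overline\Om^{d_\rho},d_\rho,\mu_\rho)$. It is compact, doubling and supports a $p$-Poincar\'e inequality (Remark~\ref{rem:Newt-closure}). By \eqref{eq:Energy-isom} and the remarks following it, $u\in D^{1,p}(\Om,d_\rho,\mu_\rho)=N^{1,p}(\Om,d_\rho,\mu_\rho)=N^{1,p}(\overline\Om^{d_\rho},d_\rho,\mu_\rho)$. We fix a representative $\tilde u$ of $u$ on $\overline\Om^{d_\rho}$ in this Newtonian class.

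The key tool is the fine Lebesgue point theory in \cite[Theorem~9.2.8]{HKST}. Outside a set $N$ with $\cp_p(N)=0$, every point $x\in\overline\Om^{d_\rho}$ is a Lebesgue point of $\tilde u$, with $\lim_{r\to0}u_{B_\rho(x,r)}=\tilde u(x)$; here $\mu_\rho(\partial_\rho\Om)=0$, so the averages only see $u$ on $\Om$. Hence $W\subset N\cup\{x\in\partial_\rho\Om\setminus N:\tilde u(x)\ne0\}$. It then suffices to show that $Z:=\{x\in\partial_\rho\Om\setminus N:\tilde u(x)\neq 0\}$ has zero capacity.

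For $x\in\partial\Om\setminus N$, Proposition~\ref{prop:ExtTrace} gives $\tilde u(x)=T_\rho u(x)=T_du(x)$. Thus $Z\cap\partial\Om$ is contained in the set where $T_du\ne0$, which is $\nu$-null and hence $\nu_\rho$-null. Two issues remain.

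First, a $\nu_\rho$-null set need not have capacity zero; Lemma~\ref{lem:capacity2nu} only gives the converse. To fix this, we compare $u$ with the zero function via the trace. We use $T_\rho u=0$ $\nu_\rho$-a.e.\ on $\partial_\rho\Om$, since $\nu_\rho(\{\infty\})=0$. The main step is then to show that a function $\tilde u\in N^{1,p}(\overline\Om^{d_\rho})$ whose Lebesgue values vanish $\nu_\rho$-a.e.\ on $\partial_\rho\Om$ vanishes $\cp_p$-q.e.\ there. I would argue this through the Poincar\'e-type estimate underlying Proposition~\ref{prop:capest}. Take $x\in\partial_\rho\Om$ with $|\tilde u(x)|>\epsilon$ and a small ball $B=B_\rho(x,r)$. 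By Lebesgue differentiation, and using the codimensionality of Theorem~\ref{thm:nu-rho-codim} together with Lemma~\ref{lemma:measure content}, the set $\{\tilde u=0\}\cap\partial_\rho\Om\cap B$ has large $\mathcal H^{-\theta}_r$-content. The set $\{|\tilde u|>\epsilon/2\}$ near $x$ has content controlled from below as well. Proposition~\ref{prop:capest} then yields a lower bound $\int_{2\lambda B}g_{\tilde u}^p\,d\mu_\rho\gtrsim \epsilon^p\mu_\rho(B)/r^p$. This means $x$ lies in the set where the fractional maximal function $\sup_r r^{p}\vint_{B_\rho(x,r)} g_{\tilde u}^p\,d\mu_\rho$ is bounded below. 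By the standard capacity estimate for such maximal functions, that set has $p$-capacity zero; in fact $\mathcal H^{-p}$-measure zero, which implies capacity zero.

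An alternative, cleaner route is available if the quasicontinuity-based characterization holds. In that case I would approximate $u$ by $u_k=E_\rho(T_\rho u)$-type corrections. More precisely, $v:=u-E_\rho(0)=u$ satisfies $T_\rho v=0$, and $N^{1,p}_0$ is characterized by a zero trace (as in \cite{Maly}/\cite{GibaraSh}) on uniform domains with codimensional boundary measure. I would cite this if it is available.

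The point $\infty$ is handled by the same argument. If $\cp_p(\{\infty\})=0$, there is nothing to prove. Otherwise $\infty$ is a Lebesgue point, and the argument above at $x=\infty$ uses Case~(2) of Theorem~\ref{thm:nu-rho-codim}. I expect the main obstacle to be this q.e.-vanishing step, i.e.\ passing from $\nu_\rho$-a.e.\ to capacity-a.e.
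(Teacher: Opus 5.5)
Your main route is correct and is essentially the paper's proof. Both arguments apply Proposition~\ref{prop:capest} on small balls centered at bad points of $\partial_\rho\Om$, with $E$ a superlevel set of $u$ (content bound from Lemma~\ref{lemma:measure content}) and $F$ the zero set of the trace on $\partial_\rho\Om$ (content bound from Theorem~\ref{thm:nu-rho-codim}). Both then conclude with the Vitali-covering argument that you quote as the $\mathcal H^{-p}$-nullity of $\{x:\limsup_{r\to0}r^p\vint_{B_\rho(x,r)}\widehat g_u^p\,d\mu_\rho>0\}$ and that the paper carries out by hand. The only real difference is ordering: the paper first treats $0\le u\le 1$ and passes to Lebesgue points at the end, while you pass to Lebesgue points first.

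When you write it up, fix the following points:
\begin{enumerate}
\item Replace $\sup_r$ by $\limsup_{r\to0}$. With the supremum over all radii, the set where the maximal function is bounded below is not capacity-null.
\item Record that $\nu_\rho(N)=0$ by Lemma~\ref{lem:capacity2nu}. This gives $\tilde u=0$ pointwise $\nu_\rho$-a.e.\ on $\partial_\rho\Om$, which is what Proposition~\ref{prop:capest} requires.
\item Cite \cite[Theorem~4.1]{KKST} for the Lebesgue points when $p=1$, since \cite[Theorem~9.2.8]{HKST} covers only $p>1$.
\item Drop the ``alternative route''. The zero-trace characterization of $N^{1,p}_0$ is Theorem~\ref{lem-zeroboundaryvalues}, which is derived from this very lemma, so citing it here would be circular.
\end{enumerate}
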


\begin{proof} 
In this proof, unless otherwise stated, the balls are with respect to the space $(\overline{\Om}^{d_\rho},d_\rho,\mu_\rho)$.
We first assume that $0\le u\le 1$.
We get from \cite[Theorem 5.4]{KRST} that $u \in N^{1,p}(\Omega,d_{\rho},\mu_{\rho})$, and by the above discussion, we also have that $u\in N^{1,p}(\overline{\Om}^{d_\rho},d_{\rho},\mu_{\rho}$).

Fix $0<\delta\le 1$ and let $A$ be the set of all points $x\in \partial_{\rho} \Omega$ such that $\limsup_{r\rightarrow0^+}u_{B_\rho(x,r)}>\delta$.
It is enough to prove that $\cp_p(A)=0$ for each $0<\delta\le 1$.
Note that in the rest of the proof, the comparison constants are allowed to depend on $\delta$.

Fix $0<R<1$. 
Using the $5B$-covering theorem (see for instance~\cite[page~60]{HKST}), we cover $A$ by balls $B_\rho(x_i, 10 \lambda r_i)$ such that the balls $B_\rho(x_i,2 \lambda r_i)$ are disjoint, $x_i \in A$, $u_{B_\rho(x_i,r_i)}> \delta$ and $r_i<R$.
Note that as $u\le 1$,
\begin{align*}
\delta &< \vint_{B_\rho(x_i,r_i)}u\, d\mu_\rho=\frac{1}{\mu_\rho(B_\rho(x_i,r_i))}\left[\int_{B_\rho(x_i,r_i)\cap\{u\le \delta/2\}} u\, d\mu_\rho
   +\int_{B_\rho(x_i,r_i)\cap\{u> \delta/2\}} u\, d\mu_\rho\right]\\
  & \le \frac{\tfrac{\delta}{2}\, \mu_\rho(B_\rho(x_i,r_i)\cap\{u\le \delta/2\})}{\mu_\rho(B_\rho(x_i,r_i))}
     +\frac{\mu_\rho(B_\rho(x_i,r_i)\cap\{u> \delta/2\})}{\mu_\rho(B_\rho(x_i,r_i))}\\
   &\le \frac{\delta}{2}+\frac{\mu_\rho(B_\rho(x_i,r_i)\cap\{u> \delta/2\})}{\mu_{\rho}(B_\rho(x_i,r_i))}.
\end{align*}
It follows that $\mu_\rho(\{u>\delta/2\}\cap B_\rho(x_i,r_i)) > \frac{\delta}{2}\mu_\rho(B_\rho(x_i,r_i))$.
Let $E_i=\{u>\delta/2\}\cap B_\rho(x_i,r_i)$ and $F_i=\{x\in \partial_{\rho} \Omega\cap B_\rho(x_i,r_i)\,:\, u(x)=T_{\rho} u(x)=0\}$.
Note that $T_{\rho} u = u$ $p$-capacity a.e. on $\partial_{\rho} \Omega$ and thus $\nu_{\rho}$-a.e. on $\partial_{\rho} \Omega$, see Lemma \ref{lem:capacity2nu}.
As $T_{\rho} u=0$ $\nu_\rho$-a.e. on $\partial_\rho \Omega$ (see Proposition~\ref{prop:ExtTrace}), we have $\nu_\rho(F_i) = \nu_\rho(B_{\rho}(x_i,r_i))$.
As $\mu_\rho(E_i)\simeq\mu_\rho(B_\rho(x_i,r_i))$, we have that $\mathcal H_{r_i}^{-\theta}(E_i)\simeq\frac{\mu_\rho(B_\rho(x_i,r_i))}{r_i^\theta}$ by Lemma~\ref{lemma:measure content}. Here the comparison constant depends solely on $\delta/2$.

We next estimate the $\theta$-codimensional Hausdorff content of $F_i$. Let $B_\rho(z_j,s_j)$ be a covering of $F_i$ with balls such that $s_j \leq r_i$ and $B_\rho(z_j,s_j)\cap F_i$ is non-empty. Then there exists $y_j \in B_\rho(z_j,s_j) \cap F_i$. Thus $\mu_\rho(B_\rho(y_j,2s_j)) \simeq \mu_\rho(B_{\rho}(z_j,s_j))$ and $B_\rho(z_j,s_j) \subset B_\rho(y_j,2s_j)$.
Then by the $\theta$-codimensionality of $\nu_\rho$ we have
\begin{align*}
\sum_j \frac{\mu_\rho(B_\rho(z_j,s_j))}{s_j^\theta}
\simeq \sum_j  
\frac{\mu_\rho(B_\rho(y_j,2s_j))}{(2s_j)^\theta}
&\simeq \sum_j 
\nu_\rho(B_\rho(y_j,2s_j))\\
&\ge\nu_\rho(F_i)
=\nu_\rho(B_\rho(x_i,r_i))
\simeq \frac{\mu_\rho(B_\rho(x_i,r_i))}{r_i^\theta},
\end{align*}
and consequently also $\mathcal H_{r_i}^{-\theta}(F_i)\gtrsim\frac{\mu_\rho(B_\rho(x_i,r_i))}{r_i^\theta}$.

As $\frac{2}{\delta}u=0$ on $F_i$ and $\frac{2}{\delta}u\ge 1 $ on $E_i$, we have by Proposition~\ref{prop:capest},
\[
\int_{B_\rho(x_i, 2 \lambda r_i)}\widehat g_u^pd\mu_\rho 
\simeq \int_{B_\rho(x_i, 2 \lambda r_i)} \widehat g_{\frac{2}{\delta}u}^p d\mu_\rho
\simge \mu_\rho(B_\rho(x_i,r_i))/r_i^p.
\]
Note that $\frac{2}{\delta}u \in N^{1,p}(B_\rho(x_i,2 \lambda r_i),d_{\rho},\mu_\rho)$.
From~\cite[Lemma 9.2.3]{HKST} 
or by using the function
\[
v(x)=\left(1-\frac{\dist_\rho(x,B_\rho(x_i,10 \lambda r_i))}{10\lambda r_i}\right)_+,
\]
we know that $\cp_p(B_\rho(x_i,10 \lambda r_i))\lesssim \mu_\rho(B_\rho(x_i,10 \lambda r_i))/(10 \lambda r_i)^p$.
Consequently,
\begin{align*}
\cp_p(A)
&\le \sum_i\cp_p(B_\rho(x_i, 10 \lambda r_i))\
\lesssim \sum_i\frac{\mu_\rho(B_\rho(x_i, 10 \lambda r_i))}{(10 \lambda r_i)^p}
\simeq \sum_i\frac{\mu_\rho(B_\rho(x_i,r_i))}{r_i^p}
\\
&\simle \sum_i  \int_{B_\rho(x_i, 2 \lambda r_i)}\widehat g_u^pd\mu_\rho
= \int_{\cup_iB_\rho(x_i, 2 \lambda r_i)}\widehat g_u^pd\mu_\rho.
\end{align*}
As $\bigcup_i B_\rho(x_i, 2 \lambda r_i)\subset \{x\in\overline\Omega^{d_\rho}: \dist_\rho(x,\partial_\rho\Omega)< 2 \lambda R\}$, by using the dominated convergence theorem and the fact that $\widehat g_u\in L^p(\Om,\mu_{\rho})$, we see that the last integral converges to $0$ as $R\rightarrow 0$. Thus $\cp_p(A)=0$ as desired.

Finally we remove the constraint that $0\le u\le 1$.
Assume that $u \geq 0$. Define $v := \min\{u,1\}$.
Then $v \in D^{1,p}(\Omega,d,\mu)$ and $T_d v = 0$ $\nu$-a.e. on $\partial \Omega$. Therefore $v$ also satisfies the conditions of the lemma and thus for $A := \{ x \in \partial_{\rho} \Omega : \limsup_{r \to 0^+} v_{B_{\rho}(x,r)} > 0 \}$ we have $\cp_p(A)=0$.
Now let $E \subset \partial_{\rho} \Omega$ be the set of points that are not Lebesgue points of $u$. Similarly let $F \subset \partial_{\rho} \Omega$ be the set of points that are not Lebesgue points of $v$. According to \cite[Theorem 9.2.8]{HKST} for the case $p>1$ and \cite[Theorem~4.1]{KKST} for the case $p=1$, we have $\cp_p(E) = \cp_p(F) = 0$.
Finally if $x \in \partial_{\rho} \Omega \setminus (A \cup E \cup F)$, then $\min\{u(x),1\}=v(x) = \lim_{r \to 0^+} v_{B_\rho(x,r)} = 0$ and therefore $\lim_{r \to 0^+} u_{B_\rho(x,r)} = u(x) = 0$. The result follows because $\cp_p(A \cup E \cup F)=0$.

We get the result for general $u$ by considering the negative and positive parts separately.
\end{proof}
 
\begin{theorem} \label{lem-zeroboundaryvalues}
Let $D^{1,p}_0(\Om,d,\mu)$ be the collection of all $v\in D^{1,p}(\Om,d,\mu)$ with $T_d v=0$ $\nu$-a.e.~on $\partial\Om$. 
Then
\[
D^{1,p}_0(\Om,d,\mu)=N^{1,p}_0(\Om,d_\rho,\mu_\rho).
\]
\end{theorem}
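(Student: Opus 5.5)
We will prove the two inclusions separately, using the fact that $D^{1,p}(\Om,d,\mu)=D^{1,p}(\Om,d_\rho,\mu_\rho)=N^{1,p}(\Om,d_\rho,\mu_\rho)$ as sets. The first equality comes from \cite[Proposition~5.4]{KRST} together with \eqref{eq:Energy-isom} and \eqref{eq:minimal-weak-transform}. The second holds because $(\Om,d_\rho,\mu_\rho)$ is bounded, doubling and supports a $p$-Poincar\'e inequality. By Remark~\ref{rem:Newt-closure}, each such function also extends to $N^{1,p}(\overline{\Om}^{d_\rho},d_\rho,\mu_\rho)$. Throughout we work with the representative of $v$ on $\partial_\rho\Om$ given by its $d_\rho$-Lebesgue values. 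Such values exist outside a set of $p$-capacity zero in $(\overline\Om^{d_\rho},d_\rho,\mu_\rho)$, by \cite[Theorem~9.2.8]{HKST} for $p>1$ and \cite[Theorem~4.1]{KKST} for $p=1$.

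For the inclusion $D^{1,p}_0(\Om,d,\mu)\subset N^{1,p}_0(\Om,d_\rho,\mu_\rho)$, take $v\in D^{1,p}_0(\Om,d,\mu)$, so $v\in N^{1,p}(\overline{\Om}^{d_\rho},d_\rho,\mu_\rho)$. Lemma~\ref{lem:zero} shows that the set $W$ of points $x\in\partial_\rho\Om$ with $\limsup_{r\to0^+}|v_{B_\rho(x,r)}|>0$ has zero capacity; note that $W$ is allowed to contain $\infty$. Off $W$ and off the non-Lebesgue set, the Lebesgue value of $v$ is zero. Hence $\cp_p(\{x\in\partial_\rho\Om: v(x)\neq0\})=0$, which is exactly membership in $N^{1,p}_0(\Om,d_\rho,\mu_\rho)$. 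This direction is immediate once Lemma~\ref{lem:zero} is available.

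For the reverse inclusion, take $v\in N^{1,p}_0(\Om,d_\rho,\mu_\rho)$. Then $v\in D^{1,p}(\Om,d,\mu)$, and it remains to show $T_dv=0$ $\nu$-a.e.\ on $\partial\Om$. The set $Z=\{x\in\partial_\rho\Om: v(x)\ne 0\}$ has $p$-capacity zero. By Theorem~\ref{thm:nu-rho-codim}, $\nu_\rho$ is $\theta$-codimensional with respect to $\mu_\rho$ on $\partial_\rho\Om$, so Lemma~\ref{lem:capacity2nu} gives $\nu_\rho(Z)=0$. By Proposition~\ref{prop:ExtTrace}, $T_\rho v$ coincides with the Lebesgue value of $v$ wherever the latter exists, so $T_\rho v=0$ $\nu_\rho$-a.e. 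Also by Proposition~\ref{prop:ExtTrace}, $T_dv=T_\rho v|_{\partial\Om}$ $\nu$-a.e. Since $\rho>0$, $\nu_\rho$-null subsets of $\partial\Om$ are exactly the $\nu$-null ones, and we conclude $T_dv=0$ $\nu$-a.e.

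The main obstacle is a small bookkeeping issue rather than an estimate. We must check that the pointwise representative of a Newton--Sobolev function on $\partial_\rho\Om$ agrees with its Lebesgue values up to a capacity-null set, so that the condition defining $N^{1,p}_0$ is independent of the chosen representative. This follows from quasicontinuity of $N^{1,p}$ functions in the doubling Poincar\'e setting: two quasicontinuous functions agreeing $\mu_\rho$-a.e.\ agree quasi-everywhere. Alternatively it follows from the density of compactly supported Lipschitz functions in $N^{1,p}_0$ cited in Remark~\ref{rem:Newt-closure}. I would invoke these facts directly and not reprove them.
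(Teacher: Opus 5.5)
Your first inclusion follows the paper's argument: Lemma~\ref{lem:zero} together with the quasi-everywhere Lebesgue point theorem.

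For the reverse inclusion your route is correct but genuinely different from the paper's.

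The paper's argument:
\begin{itemize}
\item It uses Remark~\ref{rem:Newt-closure} to approximate $v$ in $N^{1,p}(\Om,d_\rho,\mu_\rho)$ by compactly supported Lipschitz functions $v_i$.
\item Each $v_i$ has trace identically zero on $\partial_\rho\Om$.
\item It passes to the limit using the boundedness of $T_\rho$ into $L^p(\partial_\rho\Om,\nu_\rho)$ from Proposition~\ref{prop:ExtTrace}.
\end{itemize}

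Your argument is pointwise:
\begin{itemize}
\item The set $\{x\in\partial_\rho\Om: v(x)\ne 0\}$, together with the non-Lebesgue set, is capacity-null.
\item It is therefore $\nu_\rho$-null by Lemma~\ref{lem:capacity2nu}, applied in $(\overline\Om^{d_\rho},d_\rho,\mu_\rho)$ with $E=\partial_\rho\Om$. This is legitimate because of Theorem~\ref{thm:nu-rho-codim} and $\theta<p$.
\item The Lebesgue-value description of $T_\rho$ in Proposition~\ref{prop:ExtTrace} then finishes the proof.
\end{itemize}
This is the same reasoning the paper itself uses inside the proof of Lemma~\ref{lem:zero}, where it says $T_\rho u=u$ capacity-a.e.\ and hence $\nu_\rho$-a.e.

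What each approach buys:
\begin{itemize}
\item Your route avoids both the density characterization of $N^{1,p}_0$ and the $L^p$ trace bound. In exchange, it leans on the codimensionality of $\nu_\rho$ with respect to $\mu_\rho$ and on the pointwise Lebesgue point theorem.
\item The paper's treatment of this direction is a pure continuity argument, needing neither of those at that step.
\end{itemize}

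On your final paragraph: the only bookkeeping needed is two facts. First, $v(x)$ equals its $d_\rho$-Lebesgue value outside a capacity-null set, which is exactly the Lebesgue point theorem you already cite. Second, the condition defining $N^{1,p}_0$ is unchanged by modifying $v$ on a capacity-null set. The appeals to quasicontinuity or to density are not needed.
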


\begin{proof}

Suppose first that $v \in D_0^{1,p}(\Omega,d,\mu)$.
We get from \cite{KRST} that $v \in N^{1,p}(\Omega,d_{\rho},\mu_{\rho})$.
Note that $(\Omega,d_{\rho},\mu_{\rho})$ supports a $p$-Poincar\'e inequality by \cite{KRST}, and so, by~\cite[Proposition 7.1]{AikawaShan-Carleson}, we get that $v$ has an extension to $\overline{\Om}^{d_\rho}$, also denoted by $v$, such that $v \in N^{1,p}(\overline \Omega^{d_{\rho}},d_{\rho},\mu_{\rho})$, and the space $(\overline \Omega^{d_{\rho}},d_{\rho},\mu_{\rho})$ supports a $p$-Poincar\'e inequality, see Remark~\ref{rem:Newt-closure}.
According to \cite[Theorem 9.2.8]{HKST} for the case $p>1$ and \cite[Theorem~4.1]{KKST} for the case $p=1$, $p$-capacity almost every point in $(\overline \Omega^{d_{\rho}},d_{\rho},\mu_{\rho})$ is a Lebesgue point of $v$. Thus we may assume that $v(x)=0$ whenever $x \in \overline \Omega^{d_{\rho}}$ such that $\lim_{r \to 0^+} \vint_{B_{\rho}(x,r)} v(y) d \mu_{\rho} (y) = 0$.
By applying Lemma \ref{lem:zero} we then get
\begin{equation*}
\cp_p(\{x \in \partial_{\rho} \Omega : v(x) \neq 0 \})
\leq \cp_p \left( \left\{x \in \partial_{\rho} \Omega : \lim_{r \to 0^+} \vint_{B_{\rho}(x,r)} v(y) d \mu_{\rho} (y) \neq 0 \right\} \right) = 0,
\end{equation*}
and so $v\in N^{1,p}_0(\Om,d_\rho,\mu_\rho)$.

Now let $v \in N_0^{1,p}(\Omega,d_{\rho},\mu_{\rho}) \subset N^{1,p}(\Omega,d_{\rho},\mu_{\rho}) = D^{1,p}(\Omega,d,\mu)$. Then, as seen in Remark~\ref{rem:Newt-closure}, there exist functions $v_i \in N^{1,p}(\Omega,d_{\rho},\mu_{\rho})$ that are compactly supported and $v_i$ converges to $v$ in the $N^{1,p}(\Omega,d_{\rho},\mu_{\rho})$ norm. By Proposition~\ref{prop:ExtTrace} we then have $T_{\rho} v_i(x) = \lim_{r \to 0^+} \vint_{B_{\rho}(x,r)} v_i(y) d \mu_{\rho}(y) = 0$ for $\nu_{\rho}$-a.e. $x \in \partial_{\rho} \Omega$. Therefore by Proposition \ref{prop:ExtTrace},
\begin{equation*}
\| T_{\rho} v \|_{L^p(\partial_{\rho} \Omega , \nu_{\rho})}
= \| T_{\rho} (v-v_i) \|_{L^p(\partial_{\rho} \Omega , \nu_{\rho})}
\leq C \| v - v_i \|_{N^{1,p}(\Omega,d_{\rho},\mu_{\rho})}
\to 0\text{ as }i\to\infty.
\end{equation*}
Thus $T_{\rho} v(x) = 0$ for $\nu_{\rho}$-almost every $x \in \partial_{\rho} \Omega$ and thus by 
Proposition~\ref{prop:ExtTrace} we have $T_d v(x) = 0$ for $\nu$-almost every $x \in \partial \Omega$, which is what we wanted to show.
\end{proof}

\section{Solving the Dirichlet problem in unbounded uniform domains: proof of Theorem~\ref{thm:Main}}\label{Sec:MainProof}

In this section we consider the problem of finding a function $u_f\in D^{1,p}(\Om,d,\mu)$ for each $f\in HB^{1-\theta/p}_{p,p}(\partial\Om,d,\nu)$ such that $u_f$ is $p$-harmonic in $\Om$ and $Tu_{f}=f$ $\nu$-a.e.~on $\partial\Om$ when $\partial\Om$ is unbounded. Here, we say that a function $u\in D^{1,p}(\Om,d,\mu)$ is $p$-harmonic in $\Om$ if for each $v\in D^{1,p}(\Om,d,\mu)$ with $Tv=0$ $\nu$-a.e.~on $\partial\Om$, we have
\[
\int_\Om g_u^p\, d\mu\le \int_\Om g_{u+v}^p\, d\mu.
\]
The case $p=1$ is associated more with the theory of functions of bounded variation, and the case $p=\infty$ is associated more with the theory of absolutely minimal Lipschitz extensions, both of which are of a different nature from the case $1<p<\infty$. In this paper we focus on the case $1<p<\infty$.

The goal of this section is to prove the main theorem of this paper, Theorem~\ref{thm:Main}. For the convenience of the reader, we repeat the statement of the theorem next.

\begin{theorem}
Let $p > 1$.
For each $f\in HB^{1-\theta/p}_{p,p}(\partial\Om,d,\nu)$ there is a unique function $u_f\in D^{1,p}(\Om,d,\mu)$ that is $p$-harmonic in $(\Om,d,\mu)$ and $T u_f = f$ $\nu$-a.e.~on $\partial\Om$.
\end{theorem}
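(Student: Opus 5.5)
The plan is to move the problem to the sphericalized space $(\Om,d_\rho,\mu_\rho)$, which is bounded, doubling and supports a $p$-Poincar\'e inequality, solve it there by the direct method, and pull the solution back.

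\textbf{Existence.} Given $f\in \HB^{1-\theta/p}_{p,p}(\partial\Om,d,\nu)$, Theorem~\ref{thm:preserve-Besov} gives $f\in B^{1-\theta/p}_{p,p}(\partial_\rho\Om,d_\rho,\nu_\rho)$. Set $w:=E_d f\in D^{1,p}(\Om,d,\mu)=N^{1,p}(\Om,d_\rho,\mu_\rho)$, so that $Tw=f$. Consider the affine class
\[
\mathcal K:=w+D^{1,p}_0(\Om,d,\mu)=w+N^{1,p}_0(\Om,d_\rho,\mu_\rho),
\]
where the equality is Theorem~\ref{lem-zeroboundaryvalues}. Minimize $I(u)=\int_\Om \widehat g_u^p\,d\mu_\rho$ over $\mathcal K$. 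By \eqref{eq:Energy-isom} this equals $\int_\Om g_u^p\,d\mu$.

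Take a minimizing sequence $u_k=w+v_k$. Because $(\overline\Om^{d_\rho},d_\rho,\mu_\rho)$ is bounded, doubling and supports a Poincar\'e inequality, I need a Maz'ya/Friedrichs-type bound $\|v\|_{L^p(\mu_\rho)}\lesssim \|\widehat g_v\|_{L^p(\mu_\rho)}$ for $v\in N^{1,p}_0$. Such a bound holds because the complement $\partial_\rho\Om$ has positive capacity, as it carries the codimensional measure $\nu_\rho$ (cf.\ Lemma~\ref{lem:capacity2nu}). This makes $(v_k)$ bounded in $N^{1,p}(\Om,d_\rho,\mu_\rho)$, which is reflexive for $p>1$. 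Mazur's lemma then gives convex combinations converging in norm to some $v$. Since $N^{1,p}_0$ is a closed subspace, $v\in N^{1,p}_0$, and lower semicontinuity of the energy gives that $u=w+v$ is a minimizer.

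The trace condition holds because $T u = T w = f$ $\nu$-a.e., using Theorem~\ref{lem-zeroboundaryvalues} again. Minimality over $\mathcal K$ implies in particular minimality against compactly supported perturbations. By \cite{KinSh}, $u$ has a continuous representative, so $u$ solves the problem in the sense of Definition~\ref{def:DProb} and also in the stronger sense stated at the start of this section.

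\textbf{Uniqueness.} Let $u_1,u_2$ both be solutions in the sense of Definition~\ref{def:DProb}. First I show that each is a minimizer over all of $\mathcal K$, not only against compactly supported perturbations. By Remark~\ref{rem:Newt-closure}, every $v\in N^{1,p}_0(\Om,d_\rho,\mu_\rho)$ is an $N^{1,p}$-limit of Lipschitz functions $v_i$ with compact support in $\Om$. Then $\int g_{u+v_i}^p\to\int g_{u+v}^p$, by the $L^p$ convergence of the upper gradients and the energy identity \eqref{eq:Energy-isom}. Hence the minimality passes to the limit. Since $u_1-u_2\in D^{1,p}_0$ by Theorem~\ref{lem-zeroboundaryvalues}, both functions minimize $I$ over the same class $\mathcal K$.

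Strict convexity of $t\mapsto t^p$ applied to $(u_1+u_2)/2$, together with $g_{(u_1+u_2)/2}\le (g_{u_1}+g_{u_2})/2$, gives $g_{u_1-u_2}=0$ a.e. To get this I would work at the level of minimal weak upper gradients, or with a Cheeger differential if the upper-gradient version is delicate. Then the Poincar\'e inequality makes $u_1-u_2$ constant. A constant in $N^{1,p}_0$ whose trace vanishes $\nu$-a.e.\ must be $0$, so $u_1=u_2$.

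\textbf{Main obstacle.} The delicate steps are the Friedrichs-type inequality and the strict-convexity argument for upper gradients. Whatever happens at $\infty$ when $\{\infty\}$ has positive capacity is already absorbed by Theorem~\ref{lem-zeroboundaryvalues}.
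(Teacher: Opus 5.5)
Your existence part is essentially the paper's argument: the direct method in $(\Om,d_\rho,\mu_\rho)$ via the Maz'ya inequality, reflexivity and Mazur's lemma. Phrasing the class as $w+N^{1,p}_0$ is only cosmetic, and it even spares you the paper's separate case $\Delta=0$. Your density step, which upgrades minimality against compactly supported perturbations to minimality over all of $\mathcal K$, is also fine.

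The genuine gap is the claim that strict convexity gives $g_{u_1-u_2}=0$ a.e. You have $I(u_1)=I(u_2)=\min_{\mathcal K}I$, $(u_1+u_2)/2\in\mathcal K$ and $g_{(u_1+u_2)/2}\le\frac12(g_{u_1}+g_{u_2})$. From these, strict convexity of $t\mapsto t^p$ yields only $g_{u_1}=g_{u_2}$ a.e. The map $u\mapsto g_u$ is merely sublinear, not the pointwise norm of a linear differential, so this says nothing about $g_{u_1-u_2}$. For example, in $(\R^2,\|\cdot\|_1)$ with Lebesgue measure, $x_1$, $x_1+x_2$ and all their convex combinations have minimal weak upper gradient $\equiv1$. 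Yet their difference $x_2$ also has $g\equiv1$.

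Falling back on a Cheeger differential does not help either. In this example the fibrewise norm with $|du|=g_u$ is the $\ell^\infty$ norm, which is not strictly convex. Replacing it by a strictly convex norm changes the functional, so you would be proving uniqueness of Cheeger $p$-harmonic functions rather than of the minimizers in the theorem.

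The missing ingredient is the truncation (lattice) argument taken from Cheeger, which uses minimality a second time.
Given $g_{u_1}=g_{u_2}$ a.e., fix rationals $a<b$, pick $c\in(a,b)$ and set $w=\min\{u_2,\max\{u_1,c\}\}$. Then:
\begin{enumerate}
\item $w\in D^{1,p}(\Om,d,\mu)$ and $Tw=\min\{f,\max\{f,c\}\}=f$ $\nu$-a.e., so $w\in\mathcal K$;
\item $g_w\le g_{u_1}$ a.e.;
\item $g_w=0$ a.e. on $W[a,b]:=\{u_2\ge b>a\ge u_1\}$, since $w\equiv c$ there.
\end{enumerate}
Minimality gives $\int_\Om g_{u_1}^p\,d\mu\le\int_\Om g_w^p\,d\mu\le\int_{\Om\setminus W[a,b]}g_{u_1}^p\,d\mu$. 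Hence $g_{u_1}=0$ a.e. on $W[a,b]$, and therefore on $\{u_2>u_1\}=\bigcup_{a<b,\ a,b\in\Q}W[a,b]$. By symmetry the same holds on $\{u_1>u_2\}$. Consequently $g_{u_1-u_2}\le g_{u_1}+g_{u_2}=0$ a.e. on $\{u_1\neq u_2\}$, while $g_{u_1-u_2}=0$ a.e. on $\{u_1=u_2\}$ by locality. Thus $g_{u_1-u_2}=0$ a.e., and only now does your final step (Poincar\'e inequality plus vanishing trace) close the proof.
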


We point out here that the proof below of the uniqueness claim of the theorem follows along the lines of the proof of a similar uniqueness result in~\cite{Chee}. As the structure considered in~\cite{Chee} is not precisely that considered in this paper, we provide the proof of that segment as well for the convenience of the reader.

\begin{proof}
Fixing $f\in HB^{1-\theta/p}_{p,p}(\partial \Om,d,\nu)$, we use Theorem~\ref{thm:preserve-Besov} to see that also $f\in B^{1-\theta/p}_{p,p}(\partial_{\rho} \Om,d_\rho,\nu_\rho)$.
We get from Lemma \ref{lem:capacity2nu} that $\text{Cap}_p(\partial_\rho\Om)>0$ where the $p$-capacity is with respect to the space $(\overline{\Om}^{d_\rho}, d_\rho,\mu_\rho)$.
Hence by Maz'ya's inequality as in~\cite[Theorem~5.53]{bjornbjorn} (see~\cite{Maz} for the original proof in the Euclidean setting), we have that
\begin{equation}\label{eq:Mazya}
\int_\Om|v|^p\, d\mu_\rho\le C\, \int_\Om \widehat g_v^p\, d\mu_\rho=C\, \int_\Om g_v^p\, d\mu
\end{equation}
whenever $v\in N^{1,p}(\Om,d_\rho,\mu_\rho)$ with $T_{\rho} v=0$ $\nu_\rho$-a.e.~on $\partial_\rho\Om$.
Here $\widehat g_v$ is the minimal $p$-weak upper gradient of $v$ in $(\Om,d_\rho,\mu_{\rho})$, and we used \eqref{eq:Energy-isom}.
The comparison constant depends on $\mu_{\rho}(\Omega)$, $\diam_{\rho}(\Omega)$, $p$, $\cp_p(\partial_{\rho} \Omega)$ and $C_P$.

We now proceed to solve the problem of finding a function $u_f\in N^{1,p}(\Om,d_\rho,\mu_\rho)$ that is $p$-harmonic in $\Om$ with respect to the metric $d_\rho$ and with trace $T_\rho(u_f)=f$ $\nu$-a.e.~on $\partial\Om$ as follows. Let
\[
\Delta:=\inf\bigg\lbrace \int_\Om \widehat g_u^p\, d\mu_\rho\, :\, u\in N^{1,p}(\Om,d_\rho,\mu_\rho)\text{ with }T_\rho u=f \,\nu\text{-a.e.~on } \partial\Om \bigg\rbrace.
\]
Since $f\in B^{1-\theta/p}_{p,p}(\partial_\rho\Om, d_\rho,\nu_\rho)$, by Proposition~\ref{prop:ExtTrace} we know that $\Delta\le \int_\Om\widehat g_{E_\rho f}^p\, d\mu_\rho<\infty$. Thus we can find a sequence $(u_k)_k$ from $N^{1,p}(\Om,d_\rho,\mu_\rho)$ with $T_\rho u_k=f$ $\nu$-a.e.~on $\partial\Om$ such that 
\[
\lim_{k \to \infty} \int_\Om \widehat g_{u_k}^p\, d\mu_\rho=\Delta.
\]

If $\Delta=0$, then the functions $u_k$ have minimal $p$-weak upper gradients $\widehat{g_k}\in L^p(\Om,\mu_\rho)$ such that $\widehat{g_k}\to 0$ in $L^p(\Om,\mu_\rho)$. Moreover, $T_\rho u_k=f$ $\nu$-a.e.~on $\partial\Om$ for each $k$, and so $T_\rho(u_k-u_m)=0$ $\nu$-a.e.~on $\partial\Om$ for each $k,m\in\N$, and $\widehat g_{u_k-u_m}\le \widehat{g_k}+\widehat{g_m}$.
Applying~\eqref{eq:Mazya} to $u_k-u_m$ we also get $\int_\Om|u_k-u_m|^p\, d\mu_\rho\le C\, \int_\Om[\widehat{g_k}^p+\widehat{g_m}^p]\, d\mu_{\rho}$. It follows that $(u_k)_k$ is a Cauchy sequence in $N^{1,p}(\Om,d_\rho,\mu_\rho)$ that converges to a constant function (the limit function has zero function as a $p$-weak upper gradient, and so by the Poincar\'e inequality, we know that the limit function is also constant). 
As $T_\rho$ is a bounded linear operator, it follows that $f=T_\rho u_k\to c$ where $c$ is the constant function arising as the limit function. Therefore $f$ is constant, and naturally the limit constant function is the unique solution to the corresponding Dirichlet problem.

For the rest of the proof we assume that $\Delta>0$.
Without loss of generality we may also assume that $\int_\Om \widehat g_{u_k}^p\, d\mu_\rho\le 2\Delta$.
Moreover, by~\eqref{eq:Mazya} we know that
\begin{align*}
\|u_k\|_{L^p(\Om,\mu_\rho)}
&\le \|u_k-E_\rho f\|_{L^p(\Om,\mu_\rho)}+\|E_\rho f\|_{L^p(\Om,\mu_\rho)}\\
&\lesssim\, \|\widehat g_{u_k-E_\rho f}\|_{L^p(\Om,\mu_\rho)}+\, \|f\|_{B^{1-\theta/p}_{p,p}(\partial_{\rho} \Om,d_{\rho},\nu_{\rho})} 
+ \|f\|_{L^p(\partial_{\rho} \Omega , \nu_{\rho})}\\
&\leq \|\widehat g_{u_k}\|_{L^p(\Om,\mu_\rho)}+\|\widehat g_{E_\rho f}\|_{L^p(\Om,\mu_\rho)}
+\|f\|_{B^{1-\theta/p}_{p,p}(\partial_{\rho} \Om,d_{\rho},\nu_{\rho})} + \|f\|_{L^p(\partial_{\rho} \Omega , \nu_{\rho})}\\
&\lesssim 2^{1/p}\Delta^{1/p}
+2\|f\|_{B^{1-\theta/p}_{p,p}(\partial_{\rho} \Om,d_{\rho},\nu_{\rho})} + \|f\|_{L^p(\partial_{\rho} \Omega , \nu_{\rho})}
<\infty.
\end{align*}
In the above estimates we also used Proposition \ref{prop:ExtTrace}. 
It follows that $(u_k)_k$ is a bounded sequence in $N^{1,p}(\Om,d_\rho,\mu_\rho)= N^{1,p}(\overline\Om^{d_{\rho}},d_\rho,\mu_\rho)$, and by the reflexivity of $N^{1,p}(\overline\Om^{d_{\rho}},d_\rho,\mu_\rho)$ (which follows from the facts that $\mu_\rho$ is doubling and that $(\Om,d_\rho, \mu_\rho)$ supports a $p$-Poincar\'e inequality, see~\cite[Theorem~13.5.7]{HKST}) together with Mazur's lemma we can find a sequence $(v_n)_n$ of convex combinations of $u_k$ with 
\[
v_n=\sum_{k=n}^{N(n)}\lambda_{n,k}\, u_k, \text{ for a choice of nonnegative }\lambda_{n,k}\text{ with }
\sum_{k=n}^{N(n)}\lambda_{n,k}=1
\]
and a function $u_f\in N^{1,p}(\Om,d_\rho,\mu_\rho)$ such that $v_n\to u_f$ in $N^{1,p}(\Om,d_\rho,\mu_\rho)$ as $n\to \infty$.
Note that as $T_\rho$ is linear, we have $T_\rho(v_n)=\sum_{k=n}^{N(n)}\lambda_{n,k}\, T_\rho(u_k)=f$, and as $T_\rho$ is a bounded linear operator and hence is continuous, we also have that $T_\rho(u_f)=f$. It follows that
\[
\Delta\le \int_\Om \widehat g_{u_f}^p\, d\mu_\rho=\lim_{n\rightarrow\infty}\int_\Om\widehat g_{v_n}^p\, d\mu_\rho.
\]
On the other hand,
\[
\left(\int_\Om\widehat g_{v_n}^p\, d\mu_\rho\right)^{1/p}
\le \sum_{k=n}^{N(n)}\lambda_{n,k}\, \left(\int_\Om\widehat g_{u_k}^p\, d\mu_\rho\right)^{1/p}\to\Delta^{1/p}\text{ as }n\to\infty.
\]
Thus it follows that
\[
\int_\Om \widehat g_{u_f}^p\, d\mu_\rho=\Delta.
\]

Suppose now that $v\in N^{1,p}(\Om,d_{\rho},\mu_{\rho})$ such that $T_\rho v=0$. Then $T_\rho(u_f+v)=f$, and so
\[
\int_\Om \widehat g_{u_f}^p\, d\mu_\rho=\Delta\le \int_\Om\widehat g_{u_f+v}^p\, d\mu_\rho,
\]
that is, $u_f$ is $p$-harmonic in $(\Om,d_\rho,\mu_\rho)$. It now follows from Proposition~\ref{prop:ExtTrace} and~\eqref{eq:Energy-isom} that $u_f\in D^{1,p}(\Om, d, \mu)$ with $Tu_f=f$ on $\partial\Om$ and $u_f$ is $p$-harmonic on $(\Om,d,\mu)$.

Finally, suppose that $v$ is another solution in $(\Om,d,\mu)$ with boundary data $f$. Then by the uniform convexity of $L^p(\Om,\mu)$, we know that $g_v=g_{u_f}$ $\mu$-a.e.~in $\Om$, see for instance. 
Thus $g_{u_f}$ is also the minimal $p$-weak upper gradient of both $\min\{u_f,v\}$ and $\max\{u_f,v\}$, see \cite[Corollary 2.20]{bjornbjorn}.
Let $a,b \in \Q$ with $a<b$ and consider the set $W[a,b]:=\{x\in \Om\, :\, v(x)\ge b>a\ge u_f(x)\}$.
We fix a real number $c$ with $a<c<b$, and set a function $w$ on $\Om$ by 
\[
w=\min\{v,\, \max\{u_f,c\}\}.
\]
Note that then $w(x)=c$ whenever $x\in W[a,b]$.
It follows that $w\in D^{1,p}(\Om,d,\mu)$ with $g_w\le g_{u_f}=g_v$ $\mu$-a.e. on $\Om$ while $g_w=0$ $\mu$-a.e. on $W[a,b]$.
Furthermore, $T w=f$.
This is because $v=f=u_f$ $\nu$-a.e. on $\partial \Om$ and also $T w(x)=w(x)$ for $\nu$-a.e. $x\in\partial \Om$. 
Note that if $\cp_p(A)=0$ for $A \subset \partial \Omega$, then $\nu(A)=0$, see Lemma~\ref{lem:capacity2nu} above.
It follows that we have
\[
\Delta=\int_\Om g_{u_f}^p\, d\mu
\leq \int_\Om g_w^p\, d\mu
\le \int_{\Om\setminus W}g_{u_f}^p\, d\mu,
\]
which is only possible if $g_{u_f}=0$ $\mu$-a.e.~in $W$.
Since $U_+:=\{x\in\Om\, :\, v(x)>u_f(x)\}$ is the countable union of sets of form $W[a,b]$, it follows that $g_{u_f}=0$ $\mu$-a.e.~on $U_+$.
Reversing the roles of $u_f$ and $v$ in the above argument, we also get that $g_{u_f}=0$ $\mu$-a.e.~on the set $\{x\in\Om\, :\, v(x)<u_f(x)\}$. Thus $0\le g_{u_f-v}\le g_{u_f}+g_v=0$ $\mu$-a.e.~on the set $U:=\{x\in\Om\, :\, u_f(x)\ne v(x)\}$. On the set $\Om\setminus U$ we have $u_f=v$, and so $g_{u_f-v}=0$ $\mu$-a.e.~on $\Om\setminus U$ as well, that is, $g_{u_f-v}=0$ $\mu$-a.e.~on $\Om$. As $\Om$ is connected and supports a $p$-Poincar\'e inequality, and as $u_f$ and $v$ are continuous on $\Om$, it follows that $u_f-v$ is constant on $\Om$. As $T(u_f-v)=0$ on $\partial\Om$, it follows that $u_f-v=0$ on $\Om$ completing the proof.
\end{proof}

\section{Examples} \label{Sec:Examples}

In this section we give three examples to illustrate the techniques developed in the prior sections.

\begin{example}\label{Example-0}
We begin with an example most familiar to the reader. With $n\ge 2$ a positive integer, we consider $\Om$ to be the upper half-space $\Om:=\{(x_1,\cdots, x_n)\in \R^n\, :\, x_n>0\}$, equipped with the Euclidean metric and the standard $n$-dimensional Lebesgue measure.
Such $\Om$ satisfies all of our assumptions related to $\Om$, and $\partial\Om$ is isometrically $\R^{n-1}$. The choice of $b=(0,\cdots, 0)$ and
\[
\rho(t)=\frac{1}{(1+t)^2}
\]
yields the transformed space $(\Om, d_\rho)$ as the classical stereographic projection of $\Om$ to the open spherical cap in $n$-dimensional sphere in $\mathbb{R}^{n+1}$, with boundary $\partial_\rho\Om$ as an $(n-1)$-dimensional  great circle. This choice of $\rho$ satisfies our conditions \ref{condA}--\ref{condD} if $p>(n+1)/2$. Note here that $\theta=1$. 
\end{example}

\begin{example}\label{Example-1}
In this example we set $\rho(t)=\min\{1,t^{-\beta}\}$ with $\beta$ large enough.
Suppose that our $\Omega$, $\mu$ and $\nu$ are as stated in the standing assumptions and $b\in\partial\Omega$ is a fixed point. Then $\rho(t)=\min\{1,t^{-\beta}\}$, for sufficiently large $\beta$, satisfies the conditions \ref{condA}, \ref{condB}, \ref{condC} and \ref{condD}.
The conditions \ref{condA} and \ref{condB} follow by direct calculation needing $\beta>1$. We can take $C_A=3^\beta$ and $C_B=\beta/(\beta-1)$.
Recall that in light of condition~\ref{condA} and the doubling properties of $\mu$ and $\nu$, the conditions~\ref{condC} and~\ref{condD} need to be verified only for the case $r \ge 1$.
Thus suppose that $r\ge 1$. Then
\begin{align*}
    \int_{\Omega\setminus B(b,r)}\rho(|x|)^p\,d\mu(x)&=\sum_{j=0}^\infty\int_{B(b,2^{j+1}r)\setminus B(b,2^jr)}\rho(|x|)^p\,d\mu(x)\\
    &\le\sum_{j=0}^\infty\rho(2^jr)^p \mu(B(b,2^{j+1}r))\\
    &\le C_\mu\rho(r)^p\mu(B(b,r))\sum_{j=0}^\infty\left(\frac{C_\mu}{2^{\beta p}}\right)^j,
\end{align*}
where the first inequality comes from monotonicity of $\rho$ and $\mu$, and the second uses the doubling property of $\mu$ and the definition of $\rho$. The condition \ref{condC} follows, if the sum is converging, and this happens, if $\beta>\log_2(C_\mu)/p$.
Since we know that $\nu$ is also a doubling measure, the condition \ref{condD} is showed similarly, just $p$ is replaced with $p-\theta$ and $\mu$ with $\nu$. For condition \ref{condD} we need $\beta>\log_2(C_\nu)/(p-\theta)$. 
Recall that we can choose $C_{\nu} \leq 2^{-\theta} C_{\mu} C_{\theta}^2$, see \eqref{eq:codim-def}.

In the case that $\Om$ is the upper half-space in $\R^n$ as in Example~\ref{Example-0} above, we have $C_\mu=2^n$ and $C_\nu=2^{n-1}$, and so we would need $\beta>\max\{1, (n-1)/(p-1)\}$.
\end{example}

\begin{example}\label{Example-2}
In \cite{bjornbjornkorterogovintakala} it was shown that $B^{1-\theta/p}_{p,p}$-Besov energy is preserved under sphericalization when using the metric density function $\rho\colon[0,\infty)\to(0,\infty)$ defined as
\begin{equation}\label{eqn:bjornrho}
    \rho(t)=\frac{1}{(t+1)\nu( \overline{B}(b,t+1))^{1/(p-\theta)}},
\end{equation}
where $\overline{B}(b,t+1))$ means the closed ball of radius $t+1$. 
Actually, an open ball $B(b,t+1)$ was used instead of the closed ball $\overline{B}(b,t+1)$ in \cite{bjornbjornkorterogovintakala}. 
Because of the doubling property of the measure $\nu$, the above change to the closed ball changes only some numerical constants in \cite{bjornbjornkorterogovintakala}, but otherwise everything goes through. The reason why we take closed ball here is that we would like our $\rho$ to be lower semicontinuous and with closed balls it is.

The sphericalized metric in \cite{bjornbjornkorterogovintakala}, denoted by $\hat d$, was defined via taking infimum of sums over chains instead of integral over curves, because the space where the Besov space is naturally considered is often just a boundary of a domain and thus there are not necessarily any rectifiable curves.  

To compare the sphericalization in \cite{bjornbjornkorterogovintakala} to the current work, consider the setting of the present paper. Then $\partial\Om$, equipped with the metric $d$ and measure $\nu$, satisfies all the assumptions of the setting considered in~\cite{bjornbjornkorterogovintakala}.
The function $\rho$ defined in \eqref{eqn:bjornrho} satisfies conditions \ref{condA}, \ref{condB}, \ref{condC} and \ref{condD}.

The condition \ref{condA} follows directly using the fact that $\nu$ is also a doubling measure. The condition \ref{condB} comes by using the upper mass bound for $\nu$, see \eqref{eqn:massbounds}. For condition \ref{condC} let $r\ge 1$. Then
\begin{align*}
\int_{\Omega\setminus B(b,r)}\rho(|x|)^p\,d\mu(x)
&=\sum_{j=0}^\infty\int_{B(b,2^{j+1}r)\setminus B(b,2^jr)}\rho(|x|)^p\,d\mu(x)
\le\sum_{j=0}^\infty\rho(2^jr)^p \mu(B(b,2^{j+1}r))
\\
&\le C_\mu\sum_{j=0}^\infty\frac{\mu(B(b,2^jr))}{(2^jr)^p\nu(B(b,2^jr))^{p/(p-\theta)}}
\le C_\theta C_\mu r^{\theta-p}\sum_{j=0}^\infty\frac{(2^{(\theta-p)})^j} {\nu(B(b,2^jr))^{\theta/(p-\theta)}}
\\
&\leq \frac{C_{\theta} C_{\mu}}{1-2^{(\theta -p)}}\, \frac{r^{\theta-p}}{\nu(B(b,r))^{\theta/(p-\theta)}}
\leq \frac{C_{\theta}^2 C_{\mu}}{1-2^{(\theta -p)}}\, \frac{\mu(B(b,r))}{r^p\nu(B(b,r))^{p/(p-\theta)}}
\\
&\simle \rho(r)^p\mu(B(b,r+1)),
\end{align*}
where for the third and fifth inequality the codimensionality of $\nu$ was used.
The calculations for the condition~\ref{condD} are similar, but easier.

Thus the function $\rho$ presented in \cite{bjornbjornkorterogovintakala} is suitable for all the results in~\cite{KRST} and this current paper.
Therefore  $B^{1-\theta/p}_{p,p}$-Besov energy is preserved via this article's results also. However, the sphericalized metric $\hat d$  defined in \cite{bjornbjornkorterogovintakala} was slightly different, but it is bi-Lipschitz equivalent to the metric $d_\rho$  that is used in the current paper and also in \cite{KRST}. The bi-Lipschitz equivalence can be seen by comparing Lemmas~\ref{lemma4.1} and \ref{lemma4.2} with \cite[Lemma 4.2~(b) and Corollary 5.4]{bjornbjornkorterogovintakala}.
\end{example}

\section{\texorpdfstring{$p$}{}-hyperbolicity and \texorpdfstring{$p$}{}-parabolicity}
\label{sect-hyperbolicity}

In this section our goal is to prove Theorem~\ref{thm:Main2}, dealing with the notions of $p$-parabolicity and $p$-hyperbolicity.
The notions of $p$-parabolicity and $p$-hyperbolicity of a metric space have their roots in the theory of Brownian motion, where a topological measure space, equipped with a Dirichlet form, is said to be recurrent if Brownian motion on the space returns to any compact set of positive capacity in the space, and transient if it is not recurrent. 
The transience of Brownian motion was equated with the existence of singular functions on the space~\cite{Grigoryan} and with the positivity of the $2$-capacity of the boundary at infinity, relative to a compact subset of the space~\cite{Holo, HoloKos, HoloSh}. 
The terminology of transience was renamed as $2$-hyperbolicity~\cite{Holo}. The corresponding notion of $p$-hyperbolicity as positivity of the $p$-capacity of the boundary at infinity relative to compact subsets of the space makes sense even for $p\ne 2$ (see Definition~\ref{def:parab} below), and was first proposed by Holopainen in the context of Hadamard manifolds~\cite{Holo}.

\begin{definition}\label{def:parab}
A metric measure space $(Y,d_Y,\mu_Y)$ is said to be \emph{$p$-hyperbolic} if there is a ball $B\subset Y$ such that with $\Gamma(\infty)$ the collection of all locally rectifiable curves $\gamma:[0,\infty)\to Y$, with $\gamma(0) \in \overline{B}$ such that for each $n\in\N$ there is some $t_n>0$ such that $\gamma([t_n,\infty))\subset Y\setminus nB$, satisfies $\Mod_p(\Gamma(\infty))>0$.
We say that $(Y,d_Y,\mu_Y)$ is \emph{$p$-parabolic} if it is not $p$-hyperbolic.
\end{definition}

Suppose that the space $(Y,d_Y)$ is complete, $\mu_Y$ is doubling, and $(Y, d_Y,\mu_Y)$ supports a $p$-Poincar\'e inequality. Then
if $(Y,d_Y,\mu_Y)$ is $p$-hyperbolic, the positive $p$-modulus property in the definition of $p$-hyperbolicity holds for every ball $B$ in $Y$.
This follows from~\cite[Remark 3.10 and Lemma~3.15]{HoloSh}.

From Lemma~\ref{lem:zero} we know that if $\rho$ satisfies Conditions~\ref{condA}--\ref{condD} and $u\in D^{1,p}(\Om,d,\mu)$ such that $Tu=0$, then the collection of all points $\xi\in\partial_\rho\Om$ for which $T_\rho u(\xi)\ne 0$, is of null $p$-capacity with respect to the Sobolev class $N^{1,p}(\overline{\Om}^{d_\rho},d_\rho,\mu_\rho)$.
This means that if $(\overline{\Omega}^d ,d,\mu)$ is $p$-hyperbolic, then $T_\rho u(\infty)$ must equal $0$.

In the rest of this section we are not concerned with traces of functions in $D^{1,p}(\Om, d,\mu)$, and so we don't need to assume condition~\ref{condD}.
We also don't need to assume that $\partial \Omega$ is equipped with a $\theta$-codimensional measure $\nu$, or that $(\partial \Omega,d)$ is uniformly perfect or that it is unbounded. However we do still assume that $\Omega$ is an unbounded uniform domain in the space $(X,d)$.
In particular, the results of this section apply to the setting where $\Om$ is an unbounded uniform domain equipped with a measure that is doubling and supporting a $p$-Poincar\'e inequality, even if $\partial\Om$ is bounded as in~\cite{GibaraKorteSh}.

We now obtain the key result that allows us to use the tool of sphericalization to prove Theorem~\ref{thm:Main2}.

\begin{proposition}
\label{lem-parabolic-capacity}
Let $p > 1$.
The space $(\overline \Omega^d,d,\mu)$ is $p$-hyperbolic if and only if
\begin{equation*}
\rcapa_p(\{\infty\}, \overline \Omega^{d_{\rho}} \setminus B_\rho(\infty,r_0); \overline \Omega^{d_{\rho}})>0
\end{equation*}
for some $r_0>0$.
Here the variational $p$-capacity is with respect to the space $(\overline \Omega^{d_{\rho}},d_{\rho},\mu_{\rho})$.
\end{proposition}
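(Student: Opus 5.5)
We will argue by translating both quantities into the $p$-modulus of curve families and using the conformal invariance of the $p$-energy under the sphericalization, namely that $g$ is an upper gradient in $(\Om,d_\rho,\mu_\rho)$ iff $\rho(|\cdot|)g$ is an upper gradient in $(\Om,d,\mu)$. This gives $\int_\Om g^p\,d\mu_\rho=\int_\Om (\rho g)^p\,d\mu$, together with the parallel identity $\int_\gamma g\,ds_\rho=\int_\gamma \rho(|\cdot|)g\,ds$ for curves in $\Om$. Consequently, for any family $\Gamma$ of curves in $\Om$, the $p$-modulus of $\Gamma$ computed in $(\Om,d,\mu)$ equals that computed in $(\Om,d_\rho,\mu_\rho)$. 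Since $\mu(\partial\Om)=0=\mu_\rho(\partial_\rho\Om)$ and, by the uniformity of $\Om$, curves in $\overline\Om^d$ can be replaced for modulus purposes by curves in $\Om$, this equality should pass to the closures.

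The first step is to identify the curve family $\Gamma(\infty)$ of Definition~\ref{def:parab} for $(\overline\Om^d,d,\mu)$, taking $B=B(b,R_0)$, with the family of curves in $(\overline\Om^{d_\rho},d_\rho)$ that start in the $d_\rho$-compact set $\overline B$ (which lies outside $B_\rho(\infty,r_0)$ for suitable $r_0$, by \eqref{balls-at-infinity}) and accumulate at $\infty$. By \eqref{balls-at-infinity}, leaving every $nB$ is the same as entering every $B_\rho(\infty,r)$. Such a curve need not be $d_\rho$-rectifiable, but its $d_\rho$-length near the end is finite, since $\int_{r}^\infty\rho\,dt<\infty$ by condition~\ref{condB} and the curve is eventually far from $b$. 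We would prefer to use quasi-radial curves, available from uniformity, to approach $\infty$. Then $\Mod_p(\Gamma(\infty))>0$ in $d$ is equivalent to the $d_\rho$-modulus of the family $\Gamma_\rho$ of curves joining $\overline\Om^{d_\rho}\setminus B_\rho(\infty,r_0)$ to $\{\infty\}$ being positive.

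The second step invokes the standard identity between variational capacity and modulus, $\rcapa_p(E,F;Y)=\Mod_p(\Gamma(E,F;Y))$. This holds in complete (here compact) doubling spaces supporting a $p$-Poincar\'e inequality, for compact $E,F$ (see \cite{HKST} or \cite{bjornbjorn}), and applies to $(\overline\Om^{d_\rho},d_\rho,\mu_\rho)$ by \cite{KRST} and Remark~\ref{rem:Newt-closure}. Here $E=\{\infty\}$ and $F=\overline\Om^{d_\rho}\setminus B_\rho(\infty,r_0)$. The inequality $\rcapa_p\ge\Mod_p$ is direct: for admissible $u$, the function $g_u$ is admissible for the family outside an exceptional family of zero modulus. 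The reverse inequality uses $u(x)=\min\{1,\inf_\gamma\int_\gamma g\,ds_\rho\}$. Combining these gives the equivalence. For the direction ``capacity positive $\Rightarrow$ hyperbolic'' one also needs that the ball $B$ can be chosen freely, which follows from the remark after Definition~\ref{def:parab}; the choice of $r_0$ matches the choice of $B$ via \eqref{balls-at-infinity}.

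The main obstacle is the first step, namely the careful correspondence of curve families. Curves in $\Gamma(\infty)$ are only locally rectifiable and parametrized on $[0,\infty)$, whereas $\Gamma_\rho$ consists of curves with endpoint $\infty$, which may pass through $\partial\Om$ or through $b$, where $\rho(0)$ is undefined. To handle this, we would first discard subfamilies of zero modulus, namely those curves that spend positive length in $\partial\Om$; these have zero modulus since $\mu(\partial\Om)=0$, using $g=\infty\cdot\chi_{\partial\Om}$. We would then check that the admissibility of $g$ for one family is equivalent to the admissibility of $\rho g$ (respectively $g/\rho$) for the other, using truncated curves and the limit $\int_{\gamma|_{[0,t]}}\to\int_\gamma$.
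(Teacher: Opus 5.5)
Your route is the paper's. You transfer the modulus of curves tending to infinity in $(\overline\Om^d,d,\mu)$ to curves ending at $\infty$ in $(\overline\Om^{d_\rho},d_\rho,\mu_\rho)$ via $g\leftrightarrow g/\rho$. You then identify that modulus with the variational capacity of a compact condenser: the paper cites \cite[Remark~3.3]{KalSh} for $(\{\infty\},\overline B)$, and you use $(\{\infty\},\overline\Om^{d_\rho}\setminus B_\rho(\infty,r_0))$. Finally you match $B$ with $r_0$ through \eqref{balls-at-infinity}. Discarding curves with positive length in $\partial\Om$ via $\infty\cdot\chi_{\partial\Om}$ is a sound way to handle the boundary, a point the paper delegates to \cite{KRST}.

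One step is false as written. You assert that a curve in $\Gamma(\infty)$ has finite $d_\rho$-length near its end because $\int_r^\infty\rho\,dt<\infty$. Condition~\ref{condB} controls $\int_\gamma\rho(|\cdot|)\,ds$ only for curves whose length is comparable to their radial displacement. Consider a locally rectifiable curve that goes to infinity but, near each radius $2^k$, travels back and forth until it has accumulated $d$-length $\rho(2^k)^{-1}$. By condition~\ref{condA} it satisfies $\int_\gamma\rho(|\cdot|)\,ds=\infty$, so it corresponds to no rectifiable curve of $(\overline\Om^{d_\rho},d_\rho)$ ending at $\infty$. For such curves, admissibility of $\widehat g$ for the $d_\rho$-family gives no information on $\int_\gamma\rho\,\widehat g\,ds$. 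Hence the inequality $\Mod_p(\Gamma(\infty))\le\Mod_{\rho,p}(\Gamma_\rho)$ does not follow, and this is exactly the inequality needed for ``hyperbolic $\Rightarrow$ capacity positive''. Restricting to quasi-radial curves does not help, since the modulus is taken over the whole family.

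The repair is short. Since $\mu(\partial\Om)=0$ and condition~\ref{condC} holds, $\int_{\overline\Om^d}\rho(|x|)^p\,d\mu=\mu_\rho(\Om)<\infty$. So $\rho(|\cdot|)/n$ is admissible for $\{\gamma\in\Gamma(\infty):\int_\gamma\rho(|\cdot|)\,ds=\infty\}$ for every $n$, and this subfamily has $p$-modulus zero. Equivalently, use $\rho\,(\widehat g+\epsilon)$ as the test function. The paper's claimed one-to-one correspondence between $\Gamma(\infty)$ and $\Gamma_\rho^P(\infty)$ tacitly needs the same observation. With this fix your argument goes through.
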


\begin{proof}
Let $B$ be a ball in $(\overline \Omega^d,d)$ and set $\Gamma(\infty)$ to be the collection of all locally rectifiable curves $\gamma:[0,\infty)\to\overline{\Om}^d$ with $\gamma(0)\in \overline{B}$ and for each positive integer $n$ there is some $t_n>0$ such that $\gamma([t_n,\infty))\subset \overline{\Om}^d\setminus nB$. We also set $\Gamma_\rho(\infty)$ to be the collection of all rectifiable $\gamma:[0,1]\to\overline{\Om}^{d_\rho}$ with $\gamma(0)\in \overline{B}$ such that $\gamma(1)=\infty$, and set $\Gamma_\rho^P(\infty)$ to be the collection of all curves $\gamma\in\Gamma_\rho(\infty)$ for which $\gamma([0,1))\subset\overline{\Om}^d$.
Then
\[
\Mod_{\rho,p}(\Gamma_\rho(\infty))=\Mod_{\rho,p}(\Gamma_\rho^P(\infty)),
\]
where $\Mod_{\rho,p}$ denotes the $p$-modulus $\Mod_p$ of the curve family measured in the space $(\overline{\Om}^{d_\rho}, d_\rho,\mu_\rho)$.

Since $\diam_\rho(\overline{\Om}^{d_{\rho}} \setminus B(b,R))\to 0$ as $R\to\infty$ by \eqref{balls-at-infinity}, it follows that each path $\gamma\in\Gamma(\infty)$ satisfies $\lim_{t\to\infty}d_\rho(\gamma(t),\infty)=0$. By re-parametrizing $\gamma$ via the transformation $\phi:[0,1)\to[0,\infty)$ given by $\phi(t)=\tfrac{t}{1-t}$, we obtain $\beta=\gamma\circ \phi:[0,1)\to\overline{\Om}^d$; then extending $\beta$ to $[0,1]$ by setting $\beta(1)=\infty$, we obtain a one-to-one correspondence between $\Gamma(\infty)$ and $\Gamma_\rho^P(\infty)$.
Note that a nonnegative Borel function $g$ on $\overline{\Om}^d$ is admissible for computing $\Mod_p(\Gamma(\infty))$ if and only if the function $\widehat{g}$ given by $\widehat{g}(x)=\rho(|x|)^{-1}g(x)$ is admissible for computing $\Mod_{\rho,p}(\Gamma_\rho^P(\infty))$; this follows from the same argument found in the proof of~\cite[Lemma 5.3]{KRST}.
Because we also have $\int_{\Omega} g^p d \mu = \int_{\Omega} \widehat{g}^p d \mu_{\rho}$, we obtain
\[
\Mod_{\rho,p}(\Gamma_\rho^P(\infty))=\Mod_p(\Gamma(\infty)).
\]
Combining the above, we see that
\[
\Mod_{\rho,p}(\Gamma_\rho(\infty))=\Mod_p(\Gamma(\infty)).
\]
An appeal to~\cite[Remark~3.3]{KalSh} 
shows that 
\[
\rcapa_p(\{\infty\},\overline{B};\overline{\Om}^{d_\rho})=\Mod_{\rho,p}(\Gamma_\rho(\infty)),
\]
and combining this with the previous equation and \eqref{balls-at-infinity} yields the desired result.
\end{proof}

The results in~\cite{HoloKos} tell us of a volume growth property that guarantees $p$-parabolicity of a manifold, while the results in~\cite{Korte} give conditions under which a point in a metric measure space has positive $p$-capacity (or not).
The above proposition tells us that~\cite{Korte} gives a way of determining $p$-parabolicity and $p$-hyperbolicity of $\overline{\Om}^d$.
In this section we give two volume growth properties, 
see Theorems~\ref{thm:p-parabolic} and \ref{thm:hypo} below.
To prove these, we use the sphericalization tool.
We also provide properties that guarantee the positivity (or nullness) of $p$-capacity of $\{\infty\}$ along the lines of~\cite[Theorem~3.4]{Korte}. These properties are described in the corresponding propositions stated below.

\begin{proposition}\label{prop:parab}
Let $p>1$ and let $(Y,d_Y,\mu_Y)$ be a metric measure space such that $\mu_Y$ is doubling.
Fix $x_0\in Y$ and $r_0>0$. If
\[
\liminf_{r\rightarrow0^+}\frac{\mu_Y(B(x_0,r))}{r^p}<\infty
\]
then $\rcapa_p(\{x_0\}, Y\setminus B(x_0,r_0);Y)=0$.
\end{proposition}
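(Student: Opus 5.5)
The plan is to construct explicit test functions of logarithmic type supported on many dyadic annuli, in the spirit of \cite[Theorem~3.4]{Korte}. By hypothesis there are $M<\infty$ and a sequence $r_j\to 0^+$ with $\mu_Y(B(x_0,r_j))\le M r_j^p$ for all $j$. Passing to a subsequence, we may assume $r_1\le r_0$ and $r_{j+1}\le r_j/2$, so that the dyadic scales $r_j$ are well separated. For each $j$ we form the Lipschitz "annular" function
\[
\phi_j(x):=\min\Bigl\{1,\Bigl(2-\frac{d_Y(x,x_0)}{r_j}\Bigr)_+\Bigr\}\quad\text{or rather}\quad \phi_j(x):=\min\Bigl\{1,\Bigl(\frac{2r_j-d_Y(x,x_0)}{r_j}\Bigr)_+\Bigr\}\ \text{rescaled so that } \phi_j=1 \text{ on } B(x_0,r_j/2),\ \phi_j=0 \text{ off } B(x_0,r_j).
\]
Concretely, $\phi_j$ is $1$ on $B(x_0,r_j/2)$, vanishes outside $B(x_0,r_j)$, and has upper gradient $g_j=\tfrac{2}{r_j}\chi_{B(x_0,r_j)\setminus B(x_0,r_j/2)}$. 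It follows that
\[
\int_Y g_j^p\,d\mu_Y\le 2^p\frac{\mu_Y(B(x_0,r_j))}{r_j^p}\le 2^pM .
\]
This alone gives only a bound, not smallness, so I will average. For $N\in\N$ I set $u_N:=\frac1N\sum_{j=1}^N\phi_j$. Then $u_N(x_0)=1$ and $u_N=0$ outside $B(x_0,r_1)\supset$ complement of $B(x_0,r_0)$, so $u_N$ is admissible for $\rcapa_p(\{x_0\},Y\setminus B(x_0,r_0);Y)$. The separation $r_{j+1}\le r_j/2$ makes the annuli $B(x_0,r_j)\setminus B(x_0,r_j/2)$ pairwise disjoint. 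Consequently $g=\frac1N\sum_j g_j$ is an upper gradient of $u_N$ with pointwise disjointly supported summands, and
\[
\int_Y g^p\,d\mu_Y=\frac{1}{N^p}\sum_{j=1}^N\int_Y g_j^p\,d\mu_Y\le \frac{2^pM\,N}{N^p}=2^pM\,N^{1-p}\to0
\]
as $N\to\infty$, since $p>1$. Membership of $u_N$ in $N^{1,p}(Y)$ is clear, as it is bounded, Lipschitz, and has bounded support in a doubling space. This yields the capacity being zero.

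The only delicate point, and the main thing to double-check, is where the disjointness comes from. Disjointness of the annuli is what converts the sum of $N$ energies into an $N^{1-p}$ decay rather than an $N^0$ bound. This step is exactly where $p>1$ and the subsequence separation enter. Doubling is not really needed beyond ensuring balls have finite measure.
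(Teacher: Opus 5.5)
Your argument is correct and is essentially the paper's proof: you average $N$ radial cutoff functions at scales with $r_{j+1}\le r_j/2$, so their gradients have pairwise disjoint supports and the energy of the average is at most a constant times $M N^{1-p}\to 0$. The one difference is where the cutoffs (which you should state cleanly, e.g.\ $\phi_j(x)=\min\{1,(2-2d_Y(x,x_0)/r_j)_+\}$) make their transition: yours do so on $B(x_0,r_j)\setminus B(x_0,r_j/2)$ rather than on $B(x_0,2r_j)\setminus B(x_0,r_j)$, which removes the need for the doubling constant and ensures $u_N=0$ off $B(x_0,r_1)\subset B(x_0,r_0)$, whereas the paper's functions vanish only off $B(x_0,2r_1)$ and so tacitly require $2r_1\le r_0$.
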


\begin{proof}
Fix $M> \liminf_{r\rightarrow0^+}\frac{\mu_Y(B(x_0,r))}{r^p}$.
Then there exists a sequence $\{r_i\}_{i\in\N}$ such that $r_1\le r_0$, $0<r_{i+1}<\tfrac{r_i}{2}$ and $\mu_Y(B(x_0,r_i))/r_i^p\le M$ for every $i\in\N$. For each $i\in\N$, let
\[
u_i(x)= \left(1-\frac{\dist(x,B(x_0,r_i))}{r_i}\right)_+.
\]
Then $u_i=1$ in $B(x_0,r_i)$, $u_i=0$ in $Y\setminus B(x_0,2r_i)$, $g_{u_i}\le 1/r_i$ in $B(x_0,2r_i)\setminus B(x_0,r_i)$ and 
\[
\int_Y g_{u_i}^p d\mu_Y
\leq \frac{\mu_Y(B(x_0,2r_i))}{r_i^p}
\le C_{\mu_Y} M.
\]
Notice that the functions $g_{u_i}$ have pairwise disjoint supports.
Now fix a positive integer $K$. Let
\[
v_K=\frac1K\sum_{i=1}^K u_i.
\]
We notice that $v_K=0$ in $Y\setminus B(x_0,r_0)$ and that $v_K=1$ in $B(x_0,r_K)$. Consequently
\[
\rcapa_p(\{x_0\},Y\setminus B(x_0,r_0);Y)
\le \int_Y g_{v_K}^p d\mu_Y
=\sum_{i=1}^K \int_Y \frac{1}{K^p}g_{u_i}^p d\mu_Y
\le K^{1-p} C_{\mu_Y} M.
\]
By letting $K\rightarrow \infty$ this completes the proof.
\end{proof}

\begin{theorem}\label{thm:p-parabolic}
Let $p>1$ and suppose that 
\[
\liminf_{R\to\infty}\frac{\mu(B(b,R))}{R^p}<\infty.
\]
Then $(\overline{\Om}^d,d,\mu)$ is $p$-parabolic.
\end{theorem}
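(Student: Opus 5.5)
The plan is to transfer the problem to the sphericalized space and combine Proposition~\ref{lem-parabolic-capacity} with Proposition~\ref{prop:parab}, taking $Y=\overline\Omega^{d_\rho}$, $\mu_Y=\mu_\rho$ and $x_0=\infty$. Since $\mu_\rho$ is doubling on $\overline\Omega^{d_\rho}$ by \cite{KRST}, Proposition~\ref{prop:parab} applies. If we can show
\[
\liminf_{r\to0^+}\frac{\mu_\rho(B_\rho(\infty,r))}{r^p}<\infty,
\]
then $\rcapa_p(\{\infty\},\overline\Omega^{d_\rho}\setminus B_\rho(\infty,r_0);\overline\Omega^{d_\rho})=0$ for every $r_0>0$. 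Proposition~\ref{lem-parabolic-capacity} then gives that $(\overline\Omega^d,d,\mu)$ is not $p$-hyperbolic, i.e. it is $p$-parabolic. Only conditions \ref{condA}--\ref{condC} are needed, and any admissible $\rho$ will do; for definiteness I would take $\rho(t)=\min\{1,t^{-\beta}\}$ with $\beta$ large, as in Example~\ref{Example-1}.

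The main step is to estimate $\mu_\rho(B_\rho(\infty,r))/r^p$ for small $r$. By \eqref{balls-at-infinity}, $B_\rho(\infty,r)\subset\overline\Omega^{d_\rho}\setminus B(b,h^{-1}(r/C_1))$. Set $R=h^{-1}(r/C_1)$, which tends to $\infty$ as $r\to0$. Condition~\ref{condC} then gives
\[
\mu_\rho(B_\rho(\infty,r))\le C_C\,\rho(R)^p\,\mu(B(b,R+1)).
\]
On the other hand, $h(R)=(R+1)\rho(R)\simeq r$, up to constants from condition~\ref{condA} and the definition of $h^{-1}$ (as in the proof of Theorem~\ref{thm:nu-rho-codim}, $h(h^{-1}(r/C_1))>r/(C_1C_A)$). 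Hence
\[
\frac{\mu_\rho(B_\rho(\infty,r))}{r^p}\lesssim\frac{\rho(R)^p\,\mu(B(b,R+1))}{\rho(R)^p(R+1)^p}=\frac{\mu(B(b,R+1))}{(R+1)^p}.
\]

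What remains, and what I expect to be the only real obstacle, is the correspondence between scales. The hypothesis gives a sequence $R_i\to\infty$ with $\mu(B(b,R_i))/R_i^p\le M$, and we need radii $r_i\to0$ whose associated $R=h^{-1}(r_i/C_1)$ is comparable to $R_i$. Because $h^{-1}$ is only a generalized inverse, I would define $r_i:=C_1h(R_i)$ directly, which tends to $0$. Then $h^{-1}(r_i/C_1)\le R_i$ by definition. For the reverse comparison, condition~\ref{condA} and the quasi-decrease of $h$ (\cite[Lemma~2.2]{KRST}) give $h^{-1}(r_i/C_1)\simeq R_i$; alternatively one can use the comparability $h^{-1}(\tau)\simeq h^{-1}(\tau/2)$ from \cite[Lemma~4.2]{KRST}. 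The doubling property of $\mu$ then gives $\mu(B(b,R+1))/(R+1)^p\lesssim\mu(B(b,R_i))/R_i^p\le M$ along this sequence, so the $\liminf$ above is finite. This completes the argument.
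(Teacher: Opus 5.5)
Your proposal is correct and follows the paper's proof essentially step for step. Both arguments pass to $(\overline\Omega^{d_\rho},d_\rho,\mu_\rho)$, bound $\mu_\rho(B_\rho(\infty,r_i))$ via \eqref{balls-at-infinity} and condition~\ref{condC}, match scales through $h^{-1}(h(R_i))\simeq R_i$ from \cite[Lemma~2.2]{KRST}, and conclude with Propositions~\ref{prop:parab} and~\ref{lem-parabolic-capacity}; your choice $r_i=C_1h(R_i)$ merely spares you the appeal to \cite[Lemma~4.2]{KRST}.

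One small correction: to bound $r^{-p}$ from above you need the upper estimate $h(h^{-1}(r/C_1))\le C_A r/C_1$, not the lower estimate $h(h^{-1}(r/C_1))> r/(C_1C_A)$ that you cited. The upper estimate follows from condition~\ref{condA} and the definition of $h^{-1}$, and is also recorded in the proof of Theorem~\ref{thm:nu-rho-codim}.
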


\begin{proof}
Let $R_i$ be a sequence such that $\lim_{i \to \infty} R_i = \infty$ and $\sup_i\frac{\mu(B(b,R_i))}{R_i^p}<\infty$.
Recall from \cite{KRST} or the beginning of Section~\ref{sect-codim} that $h(t) = (t+1)\rho(t)$.
By setting $r_i = h(R_i)$, we have $\lim_{i \to \infty} r_i = 0$, see \cite[Remark 2.1]{KRST}.
If $i$ is large enough, we get from \eqref{balls-at-infinity} and Condition \ref{condC} that
\begin{equation*}
\mu_{\rho}(B_{\rho}(\infty,r_i))
\leq \mu_{\rho}(\Omega \setminus B(b,h^{-1}(r_i/C_1)))
\leq C_C \rho(h^{-1}(r_i/C_1))^p \mu(B(b,h^{-1}(r_i/C_1)+1)).
\end{equation*}
For large $i$, we get from \cite[Lemma 2.2]{KRST} that
\begin{equation*}
R_i\ge h^{-1}(h(R_i)) \geq \frac{R_i}{2(C_A C_B)^{C_A C_B}}.
\end{equation*}
Therefore we have $h^{-1}(h(R_i)) \simeq R_i$. Then by using \cite[Lemma 4.2]{KRST}, condition \ref{condA} and the doubling property of $\mu$, we get
\begin{equation*}
\rho(h^{-1}(r_i/C_1))^p \mu(B(b,h^{-1}(r_i/C_1)+1))
\simeq \rho(h^{-1}(r_i))^p \mu(B(b,h^{-1}(r_i)+1))
\simeq \rho(R_i)^p \mu(B(b,R_i)),
\end{equation*}
where the comparison constants depend only on $C_U$, $C_{\mu}$, $p$, $C_A$ and $C_B$.
Therefore
\begin{equation*}
\limsup_{i \to \infty} \frac{\mu_\rho(B_\rho(\infty,r_i))}{r_i^p}
\simle \limsup_{i \to \infty} \frac{\rho(R_i)^p \mu(B(b,R_i))}{(R_i \rho(R_i))^p}
< \infty.
\end{equation*}
Thus Proposition~\ref{prop:parab} tells us that 
\[
\rcapa_p(\{\infty\}, \overline \Omega^{d_{\rho}} \setminus B_\rho(\infty,r_0); \overline \Omega^{d_{\rho}})=0
\]
whenever $r_0>0$. Now an application of Proposition~\ref{lem-parabolic-capacity} completes the proof.
\end{proof}

\begin{proposition}\label{prop:positive}
Let $(Y,d_Y,\mu_Y)$ be a complete metric measure space such that $\mu_Y$ is doubling and the space supports a $p$-Poincar\'e inequality.
Fix $x_0\in Y$.
Suppose that there is some $q>0$ such that $0<q<p<\infty$ and 
\[
\liminf_{r\to 0^+}\frac{\mu_Y(B(x_0,r))}{r^q}>0.
\]
Then for each $r_0>0$ with $Y\setminus B(x_0,2r_0)$ non-empty, we have 
\[
\rcapa_p(\{x_0\}, Y\setminus B(x_0,r_0); Y)>0.
\]
\end{proposition}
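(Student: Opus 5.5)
The plan is to use a chaining argument along dyadic balls centred at $x_0$, in the spirit of the proof of Proposition~\ref{prop:capest}, together with Hölder's inequality. The lower mass bound $\mu_Y(B(x_0,r))\gtrsim r^q$ with $q<p$ will make the resulting series of weights summable.

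Let $u\in N^{1,p}(Y)$ be admissible, with $u\ge 1$ at $x_0$ and $u\le 0$ on $Y\setminus B(x_0,r_0)$. The case where $u(x_0)$ is not attained as a Lebesgue value is handled first. Since $q<p$, the lower mass bound together with \cite[Theorem~9.2.8]{HKST} shows that $\cp_p(\{x_0\})>0$. Indeed, a singleton of zero capacity would force $\liminf_r \mu_Y(B(x_0,r))/r^p=0$, which contradicts the lower bound; this is essentially the converse direction of \cite{Korte}. Hence $x_0$ is a Lebesgue point of every $u\in N^{1,p}(Y)$, so $u_{B(x_0,2^{-j}r_0)}\to u(x_0)\ge 1$.

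Next comes the large-scale end. Set $B_j=B(x_0,2^{-j}r_0)$. Since $Y\setminus B(x_0,2r_0)\ne\varnothing$ and $\mu_Y$ is doubling, the set $2B_0\setminus B_0$ has measure at least $c\,\mu_Y(2B_0)$; this uses connectedness, which follows from the Poincar\'e inequality, and a standard annulus estimate. On that annulus $u\le 0$. So either $u_{2B_0}\le 1/2$, or $u_{2B_0}>1/2$ while $u\le 0$ on a portion of $2B_0$ of comparable measure.

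In the first case telescoping gives $1/2\le\sum_{j\ge -1}|u_{B_j}-u_{B_{j+1}}|$. In the second case the Poincar\'e inequality gives $\vint_{2B_0}|u-u_{2B_0}|\gtrsim 1$. Either way,
\[
1\lesssim \sum_{j\ge -1} 2^{-j}r_0\Bigl(\vint_{\lambda B_j} g_u^p\,d\mu_Y\Bigr)^{1/p}.
\]

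Now apply Hölder's inequality to the sum, using the bound $\mu_Y(B_j)\gtrsim (2^{-j}r_0)^q$, which holds for all $j$ once $r_0$ is fixed and small scales are used (for the finitely many large scales, doubling takes care of it). We get
\[
1\lesssim \Bigl(\sum_j (2^{-j}r_0)^{p'}\mu_Y(B_j)^{-p'/p}\Bigr)^{1/p'}\Bigl(\sum_j\int_{\lambda B_j}g_u^p\Bigr)^{1/p}.
\]
The first factor is controlled by $\sum_j (2^{-j}r_0)^{(p-q)p'/p}<\infty$.

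The second factor is the main obstacle, because the balls $\lambda B_j$ overlap with unbounded multiplicity, so $\sum_j\int_{\lambda B_j}g_u^p$ cannot be bounded by $\int_Y g_u^p$. To get around this, I would instead weight the sum by $2^{-j\eta}$, as in Proposition~\ref{prop:capest}, with $0<\eta<(p-q)/p$. This produces some $j$ with
\[
2^{-j\eta}\lesssim 2^{-j}r_0\,\mu_Y(B_j)^{-1/p}\Bigl(\int_{\lambda B_{j}}g_u^p\Bigr)^{1/p},
\]
and therefore
\[
\int_Y g_u^p\gtrsim 2^{jp(1-\eta)}\mu_Y(B_j)r_0^{-p}\gtrsim 2^{j(p-p\eta-q)}r_0^{q-p}\ge c(r_0)>0,
\]
with $c(r_0)$ uniform in $u$. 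Taking the infimum over admissible $u$ then gives $\rcapa_p(\{x_0\},Y\setminus B(x_0,r_0);Y)>0$.
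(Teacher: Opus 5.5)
There is a genuine gap at the point $x_0$. Your first case needs $u_{B_j}\to u(x_0)$, and you justify this by claiming $\cp_p(\{x_0\})>0$. Neither reason you give supports that claim.
\begin{itemize}
\item \cite[Theorem~9.2.8]{HKST} only says that the non-Lebesgue points of a Newtonian function form a set of zero capacity. This tells you nothing about $x_0$ unless you already know the singleton has positive capacity.
\item The claim that $\cp_p(\{x_0\})=0$ forces $\liminf_{r\to0}\mu_Y(B(x_0,r))/r^p=0$ is false. In $\mathbb{R}^n$ with $p=n\ge 2$, points have zero $n$-capacity, yet $|B(x,r)|/r^n$ is a positive constant.
\end{itemize}
More importantly, positivity of $\cp_p(\{x_0\})$ is essentially equivalent to the conclusion you are trying to prove. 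If it failed, you could redefine $u(x_0)$ freely within $N^{1,p}(Y)$. Your telescoping would then control only the limit $\lim_j u_{B_j}$, not the value $u(x_0)$ on which admissibility is imposed. As written, the argument is circular exactly at $x_0$.

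The repair is what the paper does. Test only functions with $u=1$ on a small ball $B(x_0,r)$; after truncation you may also take $0\le u\le 1$ and $u=0$ off $B(x_0,r_0)$. The chain $B_j=B(x_0,2^{1-j}r_0)$ is then finite: it ends at some $B_k\subset B(x_0,r)$ with $u_{B_k}=1$, so no Lebesgue-point information at $x_0$ is needed. This gives a lower bound for $\int_Y g_u^p\,d\mu_Y$ that does not depend on $r$. One then passes from these balls to the singleton by letting $r\to0$, using the properties of the variational capacity in \cite[Theorem~6.19]{bjornbjorn}.

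Your treatment of the large-scale end and your final $2^{-j\eta}$ estimate are correct. However, the ``main obstacle'' you identify is not a real one. The paper simply bounds each $\int_{\lambda B_j}g_u^p\,d\mu_Y$ by $\int_Y g_u^p\,d\mu_Y$. It then uses $\sum_j 2^{-j}r_0\,\mu_Y(\lambda B_j)^{-1/p}\lesssim\sum_j 2^{-j(1-q/p)}<\infty$, which exploits the same room $q<p$ that your weighting does.
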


\begin{proof}
Let $0<r<r_0$ such that $Y\setminus B(x_0,2r_0)$ is non-empty, and $u\in N^{1,p}(Y,d_Y,\mu_Y)$ such that $u=1$ on $B(x_0,r)$, $u=0$ on $Y\setminus B(x_0,r_0)$ and $0\le u\le 1$ on $Y$.
Let $k \geq 2$ be the integer for which $2^{-k+1}r_0<r\le 2^{-k+2} r_0$, and for $i=0, 1,\cdots, k$ set $B_i:=B(x_0,2^{1-i}r_0)$.

Since $Y\setminus B(x_0,2r_0)$ is non-empty and $Y$ is connected (as a consequence of supporting a $p$-Poincar\'e inequality), it follows that there is some $z_0\in Y$ with $d_Y(x_0,z_0)=\tfrac{3r_0}{2}$. Therefore, by the doubling property of $\mu_Y$, 
\[
\mu_Y(B(x_0,2r_0)\setminus B(x_0,r_0))\ge \mu_Y(B(z_0,r_0/2))\ge \frac{1}{C_{\mu_Y}^3}\, \mu_Y(B(x_0,r_0)),
\]
and so
\[
\mu_Y(B(x_0,2r_0))\ge \left(1+\frac{1}{C_{\mu_Y}^3}\right)\, \mu_Y(B(x_0,r_0)).
\]

Since $u=0$ on $Y\setminus B(x_0,r_0)$ and $0\le u\le 1$ on $B(x_0,r_0)$, it follows that
\[
u_{B_0}=\frac{1}{\mu_Y(B(x_0,2r_0))}\, \int_{B(x_0,2r_0)}u\, d\mu_Y\le \frac{\mu_Y(B(x_0,r_0))}{\mu_Y(B(x_0,2r_0))}\le c<1,
\]
where $c=(1+\tfrac1C_{\mu_Y}^3)^{-1}$.
Therefore
\begin{align*}
0<1-c
\leq 1-u_{B_0}
= |u_{B_0}-u_{B_k}|
\le \sum_{j=0}^{k-1}|u_{B_j}-u_{B_{j+1}}|
&\le C_{\mu_Y} \sum_{j=0}^{k-1}\vint_{B_j}|u-u_{B_j}|\, d\mu_Y
\\
&\le C_{\mu_Y} C_{P_Y} \sum_{j=0}^{k-1}2^{1-j}r_0\, \left(\vint_{\lambda B_j}g_u^p\, d\mu_Y\right)^{1/p}.
\end{align*}
It follows that
\[
1-c\le 2 C_{\mu_Y} C_{P_Y} \sum_{j=0}^\infty \left(\frac{(2^{-j}r_0)^q\, (2^{-j}r_0)^{p-q}}{\mu_Y(\lambda B_j)}\right)^{1/p}\, 
\left(\int_Yg_u^p\, d\mu_Y\right)^{1/p}.
\]

By the hypothesis, there exist a positive integer $k_1$ and a constant $c_1>0$ so that whenever $j>k_1$, we have $\mu_Y(\lambda B_j)\ge c_1\, (2^{-j}r_0)^q$.
Hence
\[
1-c\le 2 C_{\mu_Y} C_{P_Y} \left(\int_Yg_u^p\, d\mu_Y\right)^{1/p}\, \bigg[\sum_{j=0}^{k_1} \frac{2^{-j}r_0}{\mu_Y(\lambda B_j)^{1/p}}
+\sum_{j=k_1+1}^\infty \left(\frac{r_0^{p-q}}{c_1}\right)^{1/p}\, 2^{-j(1-q/p)}\bigg].
\]
The term within the square parenthesis above is now independent of $u$ and $r$, and the series $\sum_{j=1}^\infty 2^{-j(1-q/p)}$ is finite. Hence, setting
\[
0<A:=2 C_{\mu_Y} C_{P_Y} \bigg[\sum_{j=0}^{k_1} \frac{2^{-j}r_0}{\mu_Y(\lambda B_j)^{1/p}}
+\sum_{j=k_1+1}^\infty \left(\frac{r_0^{p-q}}{c_1}\right)^{1/p}\, 2^{-j(1-q/p)}\bigg]<\infty,
\]
we see that 
\[
\left(\int_Yg_u^p\, d\mu_Y \right)^{1/p}\ge \frac{1-c}{A}>0.
\]
Taking the infimum over all $u$ and then letting $r\to 0^+$ yields the claim of the proposition thanks to~\cite[Theorem 6.19]{bjornbjorn}.
\end{proof}

\begin{theorem}\label{thm:hypo}
Let $p>1$.
Suppose that there is some $q_0>p$ such that 
\[
\liminf_{R\to\infty}\frac{\mu(B(b,R))}{R^{q_0}}>0.
\]
Then $(\overline{\Om}^d,d,\mu)$ is $p$-hyperbolic.
\end{theorem}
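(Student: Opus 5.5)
The plan mirrors the proof of Theorem~\ref{thm:p-parabolic}, with Proposition~\ref{prop:positive} taking the place of Proposition~\ref{prop:parab}. By Proposition~\ref{lem-parabolic-capacity}, it suffices to show that
\[
\rcapa_p(\{\infty\}, \overline \Omega^{d_{\rho}} \setminus B_\rho(\infty,r_0); \overline \Omega^{d_{\rho}})>0
\]
for some $r_0>0$, the capacity being taken in $(\overline\Om^{d_\rho},d_\rho,\mu_\rho)$. This space is complete, $\mu_\rho$ is doubling on it, and it supports a $p$-Poincar\'e inequality (Remark~\ref{rem:Newt-closure}). So Proposition~\ref{prop:positive} applies with $x_0=\infty$, provided we find some $0<q<p$ with
\[
\liminf_{r\to0^+}\frac{\mu_\rho(B_\rho(\infty,r))}{r^q}>0.
\]
We take $r_0$ small enough that $\overline\Om^{d_\rho}\setminus B_\rho(\infty,2r_0)\neq\varnothing$, which is possible because the space is not a single point.

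The main work is a lower bound for $\mu_\rho(B_\rho(\infty,r))$ in terms of the original data. For small $r$, the left inclusion in \eqref{balls-at-infinity} together with \eqref{condCreverse} gives
\[
\mu_\rho(B_\rho(\infty,r))\ge \int_{\Om\setminus B(b,2h^{-1}(r/c))}\rho(|x|)^p\,d\mu(x)\gtrsim \rho(R)^p\,\mu(B(b,R+1)),\qquad R:=2h^{-1}(r/c),
\]
with $c=2C_2C_AC_B$. By \cite[Lemma~4.2]{KRST} and condition~\ref{condA}, $h(R)\simeq r$, that is, $r\simeq (R+1)\rho(R)$, and $R\to\infty$ as $r\to0$. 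Hence
\[
\frac{\mu_\rho(B_\rho(\infty,r))}{r^q}\gtrsim \frac{\rho(R)^p\mu(B(b,R))}{R^q\rho(R)^q}=\rho(R)^{p-q}R^{q_0-q}\,\frac{\mu(B(b,R))}{R^{q_0}}.
\]
By hypothesis the last factor is bounded below for large $R$.

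It remains to choose $q<p$ so that $\rho(R)^{p-q}R^{q_0-q}$ stays bounded below as $R\to\infty$; this is the main obstacle, since $\rho$ decays. I would use the decay bound coming from condition~\ref{condB}, namely \cite[Lemma~2.2]{KRST}, which is the estimate already used in the proof of Theorem~\ref{thm:p-parabolic}. It gives
\[
\rho(R)\gtrsim \rho(1)\,R^{-1-\epsilon},\qquad \epsilon\simeq C_AC_B,
\]
so that $\rho(R)^{p-q}R^{q_0-q}\gtrsim R^{q_0-q-(1+\epsilon)(p-q)}$. Letting $q\uparrow p$, the exponent tends to $q_0-p>0$, so for $q$ close enough to $p$ it is positive and the product is bounded below, indeed it tends to infinity. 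If that polynomial lower bound on $\rho$ were not available in this form, I would instead choose $\rho$ concretely, as in Example~\ref{Example-1} with $\rho(t)=\min\{1,t^{-\beta}\}$; this is legitimate because this section does not need condition~\ref{condD}. In that case $\rho(R)^{p-q}R^{q_0-q}=R^{q_0-q-\beta(p-q)}$ for $R\ge1$, and the exponent is positive once $q$ is close enough to $p$ for the fixed $\beta$. Proposition~\ref{prop:positive} then yields positive capacity, and Proposition~\ref{lem-parabolic-capacity} gives $p$-hyperbolicity.
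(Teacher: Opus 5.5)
Your proposal is correct, and your fallback is essentially the paper's proof. The paper fixes $\rho(t)=\min\{1,t^{-\beta}\}$ with $\beta>\max\{q_0/p,\log_2(C_\mu)/p\}$ and chooses $q=(\beta p-q_0)/(\beta-1)$. This makes the power of $R$ vanish exactly, so that $\mu_\rho(B_\rho(\infty,r))/r^q\gtrsim \mu(B(b,R))/R^{q_0}$; you instead take $q$ slightly below $p$ so that the power is positive, which is the same argument.

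Your primary route, working with the given $\rho$, also works, but not for the reason you state. Condition~\ref{condB} and \cite[Lemma~2.2]{KRST} control the \emph{decay} of $h(t)=(t+1)\rho(t)$, so they give an upper bound on $\rho$, not the lower bound $\rho(R)\gtrsim R^{-1-\epsilon}$. Compare how they are used in the proof of Theorem~\ref{thm:nu-rho-doubling}, and to get $h^{-1}(h(R))\simeq R$ in Theorem~\ref{thm:p-parabolic}. The lower bound you need comes instead from iterating condition~\ref{condA} in the form $\rho(2s+1)\ge \rho(s)/C_A$, which yields $\rho(R)\ge \rho(1)(R+1)^{-\log_2 C_A}$. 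Since any polynomial exponent is absorbed as $q\uparrow p$, this suffices and shows directly that $\{\infty\}$ has positive capacity for every admissible $\rho$. By Proposition~\ref{lem-parabolic-capacity}, though, that independence from $\rho$ comes for free once it holds for a single $\rho$, which is why the paper can simply pick a convenient one.
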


\begin{proof}
Let $\beta>\max\left\{\frac{q_0}{p} , \frac{\log_2(C_{\mu})}{p} \right\}$.
From Example~\ref{Example-1} we know that the choice of $\rho$ given by $\rho(t)=\min\{1,t^{-\beta}\}$ satisfies conditions~\ref{condA}, \ref{condB} and \ref{condC}.
Recall that condition~\ref{condD} is not needed in this section.
Let $q=\tfrac{\beta p-q_0}{\beta -1}$. Note that as $p<q_0$, necessarily $0<q<p$.

Let $(r_i)_{i=1}^{\infty}$ be a sequence of positive numbers such that $\lim_{i \to \infty} r_i = 0$ and $r_i \leq 1$ for every $i$.
Recall from the beginning of Section \ref{sect-codim} that $h(t) = (t+1)\rho(t)$.
Define $R_i = h^{-1}(r_i)$. Then $R_i \geq 1$ and $h(R_i) = (R_i+1)R_i^{-\beta} = r_i$ for every $i$, and $\lim_{i \to \infty} R_i = \infty$.
If $i$ is large enough, we get from \eqref{balls-at-infinity}, \eqref{condCreverse} and \cite[Lemma 4.2]{KRST} that
\begin{align*}
\mu_{\rho}(B_{\rho}(\infty,r_i))
&\geq \int_{\Omega \setminus B \left( b,2 h^{-1} \left( \frac{r_i}{2 C_2 C_A C_B} \right) \right) } \rho(|x|)^p d \mu(x)
\\
&\simge \rho \left( 2 h^{-1} \left( \frac{r_i}{2 C_2 C_A C_B} \right) \right)^p \mu \left( B \left( b,2 h^{-1} \left( \frac{r_i}{2 C_2 C_A C_B} \right)+1 \right) \right)
\\
&\simge \rho(h^{-1}(r_i))^p \mu(B(b,h^{-1}(r_i)))
\end{align*}
with comparison constants depending on $C_U$, $C_{\mu}$, $p$ and $\beta$.
It follows that
\begin{equation*}
\frac{\mu_\rho(B_\rho(\infty, r_i))}{r_i^q}
\simge \frac{\rho(R_i)^p \mu(B(b,R_i))}{ \left( \left( R_i+1 \right) R_i^{-\beta} \right)^q}
\simeq \frac{R_i^{-\beta p} \mu(B(b,R_i))}{R_i^{(1-\beta)q}}
= \frac{\mu(B(b,R_i))}{R_i^{q_0}}
\end{equation*}
with comparison constants also depending on $q$.
Therefore the hypothesis of Proposition \ref{prop:positive} is satisfied with $(Y,d_Y,\mu_Y)=(\overline{\Om}^{d_{\rho}}, d_\rho,\mu_\rho)$ and $x_0 = \infty$.
Thus if $r_0>0$ is small enough, we have
\[
\rcapa_p(\{\infty\}, \overline \Omega^{d_{\rho}} \setminus B_\rho(\infty,r_0); \overline \Omega^{d_{\rho}})>0.
\]
An application of Proposition~\ref{lem-parabolic-capacity} now completes the proof.
\end{proof}

	\noindent {\bf Addresses:}
	
\vskip .5cm
		
	\noindent R.K.: Department of Mathematics and Systems Analysis, Aalto University, P.O. Box 11100, FI-00076 Aalto, Finland.
	\\
	\noindent E-mail:  R.K.: {\tt riikka.korte@aalto.fi}
		
\vskip .5cm

         \noindent S.R.: Department of Mathematics and Systems Analysis, Aalto University, P.O. Box 11100, FI-00076 Aalto, Finland.
	\\
	\noindent E-mail:  S.R.: {\tt sari.rogovin@aalto.fi}
		
\vskip .5cm
        
	\noindent N.S.: Department of Mathematical Sciences, P.O.~Box 210025, University of Cincinnati, Cincinnati, OH~45221-0025, U.S.A.
    \\
	\noindent E-mail:  N.S.: {\tt shanmun@uc.edu}

\vskip .5cm
		
	\noindent T.T.: Department of Mathematics and Systems Analysis, Aalto University, P.O. Box 11100, FI-00076 Aalto, Finland
    \\
	\noindent E-mail:  T.T.: {\tt timo.i.takala@aalto.fi}

\end{document}